\numberwithin{equation}{section}
\newcommand{\B}{{\mathbb{B}}}
\newcommand{\C}{{\mathbb{C}}}
\newcommand{\HH}{{\mathbb{H}}}
\newcommand{\I}{{\mathbb{I}}}
\newcommand{\M}{{\mathbb{M}}}
\renewcommand{\P}{{\mathbb{P}}}
\newcommand{\R}{{\mathbb{R}}}
\newcommand{\T}{{\mathbb{T}}}
\newcommand{\Z}{{\mathbb{Z}}}
\newcommand{\Aa}{{\mathcal{A}}}
\newcommand{\Dd}{{\mathcal{D}}}
\newcommand{\Gg}{{\mathcal{G}}}
\newcommand{\Hh}{{\mathcal{H}}}
\newcommand{\Ii}{{\mathcal{I}}}
\newcommand{\Ll}{{\mathcal{L}}}
\newcommand{\Mm}{{\mathcal{M}}}
\newcommand{\Nn}{{\mathcal{N}}}
\newcommand{\Ss}{{\mathcal{S}}}
\newcommand{\Tt}{{\mathcal{T}}}
\newcommand{\Uu}{{\mathcal{U}}}
\newcommand{\Vv}{{\mathcal{V}}}
\newcommand{\Xx}{{\mathcal{X}}}
\newcommand{\swann}{\mathcal{U}(N)}
\newcommand{\euler}{\mathcal{X}_{0}}  
\newcommand{\imag}{\mathrm{\mathbf{i}}}
\newcommand{\sst}{\scriptscriptstyle}
\newcommand{\mbf}[1]{\mathbf{#1}}
\newcommand{\scr}[1]{\mathscr{#1}}
\newcommand{\mf}[1]{\mathfrak{#1}}
\newcommand{\norm}[1]{\left\| #1 \right\|}
\newcommand{\abs}[1]{\left\lvert #1 \right\rvert}
\newcommand{\pair}[1]{\left\langle #1 \right\rangle}
\newcommand{\eqst}[1]{\begin{equation*} #1 
                      \end{equation*}}
\newcommand{\eq}[1]{\begin{equation} #1
                    \end{equation}}
\newcommand{\alst}[1]{\begin{align*} #1 
                      \end{align*}}
\newcommand{\al}[1]{\begin{align} #1
                      \end{align}}
\theoremstyle{plain}
\newtheorem{thm}{Theorem}[section]
\newtheorem{lem}[thm]{Lemma}
\newtheorem*{lemma*}{Lemma}
\newtheorem{prop}[thm]{Proposition}
\newtheorem{cor}[thm]{Corollary}
\theoremstyle{definition}
\theoremstyle{definition}
\newtheorem{defn}{Definition}
\newtheorem*{note}{Note}
\newtheorem{rmk}{Remark}
\theoremstyle{remark}
\newtheorem*{question*}{Question}
\DeclareMathOperator{\id}{id}
\DeclareMathOperator{\End}{End}
\DeclareMathOperator{\grad}{grad}
\DeclareMathOperator{\img}{\mf{Im}}
\DeclareMathOperator{\re}{Re}
\DeclareMathOperator{\tr}{tr}
\DeclareMathOperator{\alt}{\rm \bf Alt}
\DeclareMathOperator{\sym}{\rm \bf Sym}
\begin{document}

\title[Hypersymplectic manifolds and associated geometries]{Hypersymplectic manifolds and associated geometries}

\author[V. Thakre]{Varun Thakre}

\address{\hspace*{-4mm}International Centre for Theoretical Sciences (ICTS-TIFR), Hesaraghatta Hobli, Bengaluru 560089, India}

\email{varun.thakre@icts.res.in}

\subjclass[2010]{Primary 53C25; Secondary 53C50, 53D20, 53C15}

\date{Revised on \today }

\keywords{Hypersymplectic manifolds, split-quaternion geometry, para-Sasakian, paraquaternionic K\"ahler}

\begin{abstract}
We investigate an obstruction for hypersymplectic manifolds equipped with a free, isometric action of ${\rm SU}(1,1)$. When the obstruction vanishes, we show that the manifold is a metric cone over a split 3-Sasakian manifold. Furthermore, if the action of ${\rm SU}(1,1)$ is also proper, then the hypersymplectic manifold fibres over a para-quaternionic K\"ahler manifold. We conclude the article with some examples for which the obstruction vanishes. In particular, we show that the moduli space to Nahm-Schmid equations admits a fibration over a para-quaternionic  K\"ahler manifold.
\end{abstract}

\maketitle


\section{Introduction}

A hypersymplectic manifold is a $4n$-dimensional pseudo-Riemannian manifold, equipped with a metric of neutral signature $(2n, 2n)$, and whose holonomy is contained inside the symplectic group ${\rm Sp}(2n, \R)$. It can be viewed as a pseudo-Riemannian analogue of hyperK\"ahler manifolds. Hypersymplectic geometry appears naturally in the study of integrable systems \cite{bartocci-mencattini04}, string theory \cite{hull98} - where it is also known by \emph{Kleinian geometry} - and gauge theory \cite{brr18, roeser14}. The terminology ``\emph{hypersymplectic}'' is due to Hitchin \cite{hit90}.

A powerful tool for constructing hypersymplectic manifolds is the hypersymplectic quotient construction, which is an adaptation of the Marden-Weinstein construction in symplectic geometry. However, in contrast with the hyperK\"ahler situation, more ofthen than not, the hypersymplectic structure on the quotient, is degenerate \cite{djs05}.

Another way of obtaining hypersymplectic manifolds is via an adaptation of Swann's bundle construction in hyperK\"ahler geometry \cite{swann91}. Starting with a quaternionic K\"ahler manifold of positive scalar curvature, say $N$, Swann's construction produces a bundle, $\swann\rightarrow N$ with a typical fibre $\HH^{\ast}/\Z_2$, whose total space carries a hyperK\"ahler structure. It is possible to carry over this construction to the pseudo-Riemannian case \cite{djs05}. In order to do so, one needs a para-quaternionic K\"ahler manifold; i.e., a $4n$-dimensional pseudo-Riemannian manifold, whose holonomy is contained inside the group 
\eqst{
{\rm Sp}(2n, \R)\cdot {\rm Sp}(2, \R) \, = \, {\rm Sp}(2n, \R)\times_{\pm 1} {\rm Sp}(2, \R).
}
They can be thought of as pseudo-Riemannian analogues of quaternionic K\"ahler manifolds. Starting with a para-quaternionic K\"ahler manifold $N$, the construction produces a bundle $\swann \rightarrow N$, with a typical fibre $\B^{\ast}/\Z_2$, where $\B^{\ast}$ is the space of non-zero split quaternions with non-zero norm. The total space $\swann$ carries a hypersymplectic structure. Both the para-quaternionic K\"ahler and hypersymplectic geometries are Einstein. Additionally, the latter is also Ricci-flat. Para-quaternionic K\"ahler manifolds are characterised by the existence of a closed 4-form, whereas hypersymplectic manifolds are equipped with family of symplectic 2-forms.

In this article, we study a more general picture. Namely, \emph{given a hypersymplectic manifold, when does it admit a para-quaternionic K\"ahler quotient?} Our basic observation is that, the total space of a Swann bundle over a para-quaternionic K\"ahler manifold admits a free, proper, isometric action of ${\rm Sp}(2,\R)\cong {\rm SU}(1,1)$, which is an analogue of the permuting ${\rm Sp}(1)$-action on hyperK\"ahler manifolds. For such hypersymplectic manifolds, we construct two maps $\rho_2: M \rightarrow S^4(B)$ - where $B$ is the standard representation of ${\rm SU}(1,1)$ on the vector space of split quaternions $\B$ - and $\rho_0: M \rightarrow \R_{>0}$. If $\rho_2$ vanishes, we show that $\rho_0$ is a \emph{hypersymplectic potential}. The level-sets of $\rho_0$ carry a split 3-Sasakian structure and the metrics on different level-sets are homothetic. In particular, the hypersymplectic manifold can be thought of as a metric cone over a split 3-Sasakian manifold. Additionally, if the action of ${\rm SU}(1,1)$ is also proper, we show that the quotient of a level-set of $\rho_0$, by $\rm{SU}(1,1)$, is a para-quaternionic K\"ahler manifold.

This approach is analogous to that of Boyer, Galicki and Mann \cite{bgm93} for hyperK\"ahler manifolds with permuting ${\rm Sp}(1)$-action.

Split 3-Sasakian structures were introduced by Swann, J{\o}rgensen and Dancer in \cite{djs05} and have  also been studied by Caldarella and Pastore \cite{cp09}, where they are referred to as ``\emph{mixed 3-Sasakian structures}". The authors show that any split 3-Sasakian structure is necessarily Einstein.

We give two examples of hypersymplectic manifolds which have the ${\rm SU}(1,1)$ symmetry, with vanishing obstruction. First example is that of hypersymplectic manifolds, obtained via hypersymplectic reduction of flat-space. We show that the Swann-bundle construction commutes with the quotient construction, which produces a family of examples of the theory. The results complement the work of Swann, J{\o}rgensen and Dancer in \cite{djs05}. 

The second example is that of the moduli space of Nahm-Schmid equations defined on the interval $[0,1]$  \cite{brr18}. The solutions to Nahm-Schmid equations exist for all times. As a result, it is possible to define a scaling action on the moduli space of solutions. The moduli space also carries a free, proper, permuting action of ${\rm SU}(1,1)$. Therefore, topologically, it has the structure of a metric cone over a split 3-Sasakian manifold. The quotient of the latter by the ${\rm SU}(1,1)$ action is a paraquaternionic K\"ahler manifold. In other words, the moduli space can be expressed as the total space of a Swann bundle over a paraquaternionic K\"ahler manifold.

\section{Acknowledgements}
\noindent The author wishes to express his heartfelt thanks to Prof. Andrew Dancer and Dr. Markus R\"oser for many helpful discussions and to Prof. Stefan Ivanov for the references \cite{iv04, iz05}. The author also wishes to thank the anonymous referee for many helpful comments, especially on the content in Sub-section 6.2 on Nahm-Schmid equations. This work originated, in part, through discussions with Prof. Nuno Rum\~{a}o during the program  \emph{Integrable systems in Mathematics, Condensed Matter and Statistical Physics} (Code: ICTS/integrability2018/07) at the International Centre for Theoretical Sciences (ICTS-TIFR). The author wishes to thank the organizers of the program for their support.


\section{Brief introduction to split quaternionic geometry}

The space of \emph{split quaternions} is a 4-dimensional vector space $\B$, spanned by $(1, \imag, \mbf{s}, \mbf{t})$, satisfying the following relations 
\eq{
\label{eq: split quaternionic relations}
\imag^2 = -1, \,\,\, \mbf{s}^2 = 1 = \mbf{t}^2, \,\,\, \imag\cdot \mbf{s} = \mbf{t} = -\mbf{s}\cdot \imag, \,\,\, \imag = \mbf{t}\cdot \mbf{s}.
} 
Like quaternions, the vector space of split quaternions comes equipped with a multiplication operation, which gives it a structure of an associative algebra.

\noindent The vector space carries a natural inner product defined by $\pair{p, q}_{\B} = \re p \,\overline{q}$, where $p = p_0 + \imag\,p_1 + \mbf{s}\,p_2 + \mbf{t}\,p_3$ and $\overline{p} = p_0 - \imag\,p_1 - \mbf{s}\,p_2 - \mbf{t}\,p_3$. Since the metric is not positive definite, it only makes sense to talk about the (isotropic) quadratic form (or the `norm-square') $\norm{p}^2:=\pair{p, p}_{\B}$, associated with the neutral signature metric. Given two split-quaternions $p$ and $q$, we have
$\norm{p\,q}^2 = \norm{p}^2\cdot \norm{q}^2,$
showing that the quadratic form is multiplicative. Note that if $p$ is a point in $\mf{Im}(\B)$, then $\norm{p}^2 = p\cdot \overline{p} = -p^2 = p_1^2 - p_2^2 - p_3^2 \in \R$.

Unlike the quaternionic algebra, the split quaternion algebra contains non-trivial zero divisors. Moreover, the elements $\imag, \mbf{s}, \mbf{t} \in \mf{Im}(\B)$ are not the only elements with length $\pm 1$. Elements with norm $1$ are parametrized by the 2-sheeted hyperboloid $x_1^2 - x_2^2 - x_3^2 = 1$, while those with norm $-1$ are parametrized by the 1-sheeted hyperboloid $x_1^2 - x_2^2 - x_3^2 = -1$. Any triple $\{ \imag, \mbf{s}, \mbf{t} \}$ satisfying \eqref{eq: split quaternionic relations} defines a \emph{split-quaternionic structure} on $\B$.

Let $U = \{q\in \B ~| \norm{q}^2 \neq 0 \}$ be the set of all units in $\B$. This is clearly a multiplicative group. The subset of $U$ consisting of all elements $q$ with $\norm{q}^2 = 1$ forms a non-compact topological group ${\rm SU}(1,1)$. This is the special unitary group of all complex $2 \times 2$ matrices $g$ that satisfy 
\begin{enumerate}
\item Unimodularity, i.e, $\det g = 1$ 
\item Pseudo-unitary condition: i.e, $g^{\ast} \, J \, g = J$ where $J = \begin{bmatrix}
1 & 0 \\
0 & -1
\end{bmatrix}$
\end{enumerate}
In particular, any element $g \in {\rm SU}(1,1)$ has the form $g = \begin{bmatrix}
\alpha & \beta \\
\beta^{\ast} & \alpha^{\ast}
\end{bmatrix}$, where $\alpha$ and $\beta$ are complex numbers subject to the condition $\abs{\alpha}^2 - \abs{\beta}^2 = 1$. The Lie algebra of ${\rm SU}(1,1)$ is 3-dimensional
\eqst{
\mf{su}(1,1) \, = \, \text{Span} \left\lbrace \, 
\begin{bmatrix}
\imag & 0 \\
0 & -\imag
\end{bmatrix}, 
\begin{bmatrix}
0 & 1 \\
1 & 0
\end{bmatrix},
\begin{bmatrix}
0 & \imag \\
-\imag & 0
\end{bmatrix}
\right\rbrace \cong \mf{Im}(\B).
}
\paragraph*{\textbf{Relation with \texorpdfstring $~{\rm SO}(1,2)$:}}
Consider the 3-dimensional Lorentz group ${\rm SO}(1,2)$. This is the group of transformations of the 3-dimensional Minkowski space $\M^3$, with determinant $1$, that preserves the quadratic form. The group acts transitively on the 1-sheeted and 2-sheeted hyperboloids and also on the cone $\norm{x}^2=0$.

Alternatively, if we consider the \emph{pseudo-sphere}
\eq{
\label{eq: pseudo-sphere}
\scr{H} = \{ \, q \in \mf{Im}(\B) \,\, | \, \norm{q}^2 = \pm 1 \, \} \cong \{ \, (x_1, x_2, x_3) \in \R^3 \,\, | \, x_1^2 - x_2^2 - x_3^2 = \pm 1 \, \}
}
then, the hyperboloids can be thought of as unit spacelike and timelike vectors in the $\scr{H}$ and the ${\rm SO}(1,2)$-action on $\scr{H}$ is then an analogue of the standard action of ${\rm SO}(3)$ on the 2-sphere $S^2$.

The group ${\rm SO}(1,2)$ is disconnected and has two connected components. We denote by ${\rm SO}^+(1,2)$ the identity component. 
Identifying $\M^3$ with the imaginary split-quaternions $\mf{Im}(\B)$,
it is easily seen that the adjoint action of ${\rm SU}(1,1)$ on $\M^3$ preserves the quadratic form, the pseudo sphere $\scr{H}$ and the null-cone. Therefore the linear transformations corresponding to the adjoint action of the elements of ${\rm SU}(1,1)$ belong to the identity component ${\rm SO}^+(1,2)$. This gives a homomorphism from ${\rm SU}(1,1)$ to ${\rm SO}^+(1,2)$ with kernel $\pm 1$; i.e., ${\rm SU}(1,1)/\pm 1 \cong {\rm SO}^+(1,2)$, similar to the homomorphism between ${\rm SU}(2)$ and ${\rm SO}(3)$.

\subsection{Modules over split quaternions}
\label{subsec: modules over split quaternions}

\noindent Consider the \emph{left $\B$ module} $\B^n \cong \R^{4n}$, equipped with the split quaternionic structure $I, S, T$, given by
\eq{
\label{eq: paraquat str. on paraquat modules}
I\,(q) \, = \, q \,\overline{\imag}, \,\,\,\,\,\, S\,(q) \, = \, q\, \mbf{s}, \,\,\,\,\,\, T\,(q) \, = \, q\, \mbf{t}. 
}
\begin{rmk}
Any element $q \in \scr{H}$ determines a product or a complex structure on $\B^n$. To see this, let $\lambda: \, \mf{Im}(\B) \longrightarrow \End\,(\B^n)$ denote the algebra homomorphism
\eqst{
\lambda(q) = \I_q \, := \, q_1 \, I \, + \, q_2 \, S\, + \, q_3 \, T.
}
Then $q$ determines a product or a complex structure, depending on whether $\norm{q}^2 = \mp 1$. In other words, the pseudo-sphere $\scr{H}$ parametrizes the complex and the product structures on $\B^n$.
\end{rmk}
The module $\B^n$ inherits the natural inner product 
\eqst{
\pair{\alpha, \beta} \, := \, \re\, \left(\beta \, \overline{\alpha}^T \right), \,\,\,\, \alpha, \, \beta \in \B^{n}.
}
The automorphism group of $\B^n$, given by
\eqst{
{\rm Sp}(n, \B) \, := \, \{ \, A \in M_n(\B) \,\, | \,\, A \overline{A}^T \, = \, \id \, \} \cong {\rm Sp}(2n, \R),
}
is nothing but the automorphism group of the symplectic vector space $(\R^{2n}, \omega_{\sst \R^{2n}})$. The Lie algebra of ${\rm Sp}(n, \B)$ is given by
\eqst{
\mf{sp}(n, \B) \, := \, \{ \, A \in M_n(\B) \,\, | \,\, A \, + \, \overline{A}^T \, = \, 0 \, \}.
}
Note that for $n=1$, we have the isomorphism ${\rm Sp}(1, \B) \cong {\rm SU}(1,1)\cong {\rm SL}(2, \R)$ and so, we can identify the Lie algebra $\mf{sp}(1, \B) = \mf{Im}(\B)$. 

Consider the action of the group ${\rm Sp}(n, \B) \times {\rm Sp}(1, \B)$ on $\B^n$, given by 
\eqst{
(A, \, \xi) \cdot q \longmapsto A\, q \,\overline{\xi}.
}
Let $\Lambda_{\scr{H}}$ denote the image of $\mf{Im}(\B)$ under the map $\lambda$. It is easy to see that the action of ${\rm Sp}(n, \B)$ is isometric and the induced action on $\Lambda_{\scr{H}}$ \emph{preserves} $\Lambda_{\scr{H}}$, pointwise. On the other hand, the ${\rm Sp}(1,\B)$-action is isometric, but the induced action on $\Lambda_{\scr{H}}$ is, pointwise, nothing but the standard action of ${\rm SO}^+(1,2)$ on $\scr{H}$. Indeed, for any $\xi \in {\rm Sp}(1, \B)$,
\eqst{
\xi \cdot (\I_q (h)) \, = \, \xi \cdot (h \,\overline{q}) \, = \, h\,  \overline{q}\,\overline{\xi} \, = \, h\, \overline{\xi}\, (\xi \, \overline{q}\, \overline{\xi}) \, = \, \I_{\text{Ad}_{\xi^{-1}} \,q}\cdot (\xi \,\cdot h).
}


\section{Hypersymplectic manifolds}

Let $(M, g_{\sst M}, I, S, T)$ be a $4n$-dimensional pseudo-Riemannian manifold, endowed with a triple of endomorphisms $I, S, T$, satisfying the split quaternionic relations \eqref{eq: split quaternionic relations} and a metric of neutral signature $(2n, 2n)$, that is compatible with the split quaternionic structure
\eqst{
g_{\sst M}(IX, \, IY) \, = \, g_{\sst M}(X, \, Y), \,\,\,\,\, g_{\sst M}(SX, \, SY) \, = \, -g_{\sst M}(X, Y) \, = \, g_{\sst M}(TX, \, TY).
}
The split quaternionic structure allows us to define the following 2-forms on $M$
\eqst{
\omega_1 (X, \, Y) \, := \, g_{\sst M} (IX, \, Y), \,\,\,\,\, \omega_2(X, \, Y) \, := \, g_{\sst M}(SX, \, Y), \,\,\,\,\, \omega_3(X, \, Y) \, := \, g_{\sst M}(TX, \, Y).
}

If each of the above 2-forms are closed, the manifold $M$ is called a \emph{hypersymplectic manifold}. Using Hitchin's arguments for the hyperK\"ahler manifolds, one can show that the structures $I, S, T$ are integrable; i.e., they are parallel with respect to the Levi-Civita connection. As a result, the holonomy group of $M$ reduces to ${\rm Sp}(n, \B)$.

The endomorphisms $S$ and $T$ are called \emph{product structures}. This is because the integrability of these structures implies that the manifold $M$ locally looks like a product $M^+ \times M^-$, where $\pm$ denotes the eigenvalues $\pm 1$ of $S$ ot $T$ and $TM^{\pm}$ denotes the corresponding eigenspaces. In fact, every element of the 1-sheeted hyperboloid $x_1^2 - x_2^2 - x_3^2 = -1$ determines a product structure as
\eqst{
(x_1, x_2, x_3) \longmapsto x_1\, I \, + \, x_2 \, S \, + \, x_3\, T.
}
Such structures are also known by \emph{paracomplex structures} in literature. On the other hand, $M$ also has a family of \emph{pseudo-K\"ahler structures}, which are parametrized by the 2-sheeted hyperboloid $y_1^2 - y_2^2 - y_3^2 = 1$ as
\eqst{
(y_1, y_2, y_3) \longmapsto y_1\, I \, + \, y_2 \, S \, + \, y_3\, T.
}

A \emph{pseudo-K\"ahler structure} on a manifold $M$ is a complex structure $I$ on $M$ along with a pseudo-Riemannian metric $g_{\sst M}$, such that the metric is compatible with $I$ and the 2-form $\omega_{\sst M}(\cdot, \cdot) = g_{\sst M}(I(\cdot), \cdot)$ is closed. In other words, a pseudo-K\"ahler structure is just a pseudo-Riemannian analogue of K\"ahler structure.

In some cases, it is possible to explicitly construct a family of examples of hypersymplectic manifolds. Ivanov and Zamkovoy \cite{iz05} constructed a hypersymplectic structure on Kodaira-Thurston (properly elliptic) surfaces. Andrada and Salamon, in \cite{as05}, show that if there exists a complex product structure on a real Lie algebra $\mf{g}$; i.e., a pair ${I, S}$ of complex structure and a product structure, then, it induces a hypersymplectic structure on the complexification $\mf{g}^{\C}$. In \cite{iv04}, Ivanov and Tsanov showed that the manifolds underlying the Lie groups ${\rm SL}(2m - 1, R)$ and ${\rm SU}(m, m - 1)$ carry a complex product structure which induces a hypersymplectic structure on their complexifications.

For a hypersymplectic manifold $(M, g_{\sst M}, I, S, T)$, let $\Ii\subset \End(TM)$ denote the trivial 3-dimensional sub-bundle spanned by $(I, S, T)$. Any covariantly constant endomorphism $\I \in \Ii$ can be thought of as a map with values in $\mf{sp}(1, \B)^{\ast}$, using the algebra homomorphism
\eqst{
\lambda: \, \mf{sp}(1, \B) \longmapsto \End(TM), \,\,\,\,\, h \longmapsto
\I_{h} \, := \lambda(h), \,\,\,\,\,\,\, h \in \mf{sp}(1, \B) = \img(\B).
}
Similarly, the associated symplectic 2-forms can be clubbed into a single $\mf{sp}(1, \B)^{\ast}$-valued 2-form as
\eqst{
\pair{\omega, h} \, := g_{\sst M}(\,\I_{h}\cdot, \, \cdot\,) \, = \, h_1 \, \omega_1 \, \imag + \, h_2 \, \omega_2 \, \mbf{s} + \, h_3 \,
 \omega_3 \, \mbf{t}.
}
Let $\Lambda_{\scr{H}} \subset \Ii$ denote the image of $\scr{H} \subset \mf{sp}(1,\B)$ under the map $\lambda$. In particular, $\Lambda_{\scr{H}}$ consists of all the product and complex structures on $M$.



\subsection{Permuting actions}

Consider the fundamental 4-form 
\eqst{
\Omega \, = \, \omega_1\wedge\omega_1 - \omega_2 \wedge \omega_2 - \omega_3 \wedge \omega_3.
}
The form is globally defined on $M$. The stabilizer group ${\rm St}_{\Omega} \in \text{Isom}(M, g_{\sst M})$ of $\Omega$ is a sub-group of the group of isometries that preserves each symplectic 2-form $\omega_i$. The induced action of ${\rm St}_{\Omega}$, on $\Lambda_{\scr{H}}$, determines the homomorphism
\eq{
\label{eq: permuting corr.}
{\rm St}_{\Omega} \rightarrow {\rm Sp}(1,\B)/\pm 1 \cong {\rm SO}^+(1,2).
}
The kernel of this homomorphism is the group of \emph{hypersymplectic isometries}, whose induced action on $\Lambda_{\scr{H}}$, pointwise, fixes $\Lambda_{\scr{H}}$. 

\begin{defn}
An isometric action of the group ${\rm Sp}(1,\B)$ on a hypersymplectic manifold $M$ is said to be \emph{permuting}, if the induced action on $\Lambda_{\scr{H}}$, is the standard action of ${\rm SO}^+(1,2)$ on $\scr{H}$.  In other words, the action is induced via the epimorphism ${\rm Sp}(1,\B) \rightarrow {\rm St}_{\Omega} \rightarrow {\rm SO}^+(1,2)$.
\end{defn}

Henceforth, without loss of generality, we will assume that $M$ admits a free, permuting, effective action of the group ${\rm Sp}(1,\B)$. The arguments that follow are an adaptation of the representation theoretic arguments in \cite{bgm93, victor}. 

\noindent Let $K^M_{\xi}$ denote the fundamental vector field on $M$ corresponding to $\xi \in \mf{sp}(1, \B)$. Define the following operators:
\eqst{
\iota: \otimes^{p} \mf{sp}(1, \B)^{*} \otimes \Omega^{q}(M) \longrightarrow \mf{sp}(1, \B)^{*} \otimes^p \mf{sp}(1, \B)^{*} \otimes \Omega^{q-1}(M), \,\,\,\,\,\,\,\,\,\,\, \ \pair{\iota(\alpha), \, \xi} = \iota_{\sst K^{M}_{\xi}}\, \alpha
}
and 
\alst{
\Ll_{\mf{sp}(1,\B)}:\otimes^{p} \mf{sp}(1, \B)^{*} \otimes \Omega^{q}(M) \longrightarrow \mf{sp}(1, \B)^{*}\otimes^p \mf{sp}(1, \B)^{*} \otimes \Omega^{q}(M), \,\,\,\,\,\,\,\,\,\,\,\,\, \pair{\Ll_{\mf{sp}(1,\B)}\,\alpha, \, \xi} = \mathcal{L}_{\sst K^{M}_{\xi}} \,\alpha.
}
Then Cartan's formula $\Ll_{\mf{sp}(1, \B)} = d\,\iota_{\mf{sp}(1, \B)} + \iota_{\mf{sp}(1, \B)} \,d$ is easily verified.

\begin{lem}[\cite{bgm93, victor}]
\label{lem: identity of omega}
For the $\mf{sp}(1, \B)^{\ast}$-valued 2-form $\omega$ we have
\eq{
\label{eq: identity of omega}
\Ll_{\mf{sp}(1, \B)} \, \omega = 2\,\omega.
}
\end{lem}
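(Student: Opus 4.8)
The plan is to reduce the identity \eqref{eq: identity of omega} to the infinitesimal form of the permuting hypothesis, namely that the fundamental vector fields rotate $I,S,T$ by the adjoint representation of $\mf{sp}(1,\B)$. Concretely, writing $\omega_h := \pair{\omega, h} = g_{\sst M}(\I_h\cdot, \cdot)$ for $h \in \mf{sp}(1,\B)$, I would first establish the pointwise identity
\eqst{
\Ll_{\sst K^M_\xi}\,\omega_h \, = \, \omega_{[\xi, h]}, \qquad \xi, h \in \mf{sp}(1,\B),
}
and then obtain \eqref{eq: identity of omega} by feeding in the structure constants of $\mf{sp}(1,\B)$. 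Note that closedness of the $\omega_i$ is not needed for this particular identity; in principle one could instead route through Cartan's formula and write $\Ll_{\mf{sp}(1,\B)}\omega = d\,\iota_{\mf{sp}(1,\B)}\omega$, but the direct equivariance computation is more transparent here.

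Two ingredients go into the displayed identity. First, the action is isometric, so $\Ll_{\sst K^M_\xi}\,g_{\sst M} = 0$. Second, because the action is permuting, the induced action on $\Lambda_{\scr{H}}$ is by definition the standard action of ${\rm SO}^+(1,2)$ on $\scr{H}$; under the covering $\mathrm{Sp}(1,\B)\to{\rm SO}^+(1,2)$ identified in Section 2, this is exactly the adjoint action of $\mathrm{Sp}(1,\B)$ on $\mf{sp}(1,\B) = \mf{Im}(\B)$. Differentiating gives $\Ll_{\sst K^M_\xi}\,\I_h = \I_{[\xi, h]}$, with the sign fixed by the convention in the definition of a permuting action. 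Since $\omega_h$ is the contraction of the parallel tensor $\I_h$ against $g_{\sst M}$, the Leibniz rule for Lie derivatives yields
\eqst{
\Ll_{\sst K^M_\xi}\,\omega_h \, = \, (\Ll_{\sst K^M_\xi}\,g_{\sst M})(\I_h\cdot, \cdot) \, + \, g_{\sst M}\big((\Ll_{\sst K^M_\xi}\,\I_h)\cdot, \cdot\big) \, = \, g_{\sst M}(\I_{[\xi, h]}\cdot, \cdot) \, = \, \omega_{[\xi, h]}.
}

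It then remains to repackage this into \eqref{eq: identity of omega}. Evaluating on the basis $\{\imag, \mbf{s}, \mbf{t}\}$ and using the split-quaternionic relations \eqref{eq: split quaternionic relations}, one finds the brackets $[\imag, \mbf{s}] = 2\mbf{t}$, $[\mbf{s}, \mbf{t}] = -2\imag$, $[\mbf{t}, \imag] = 2\mbf{s}$, so every non-zero structure constant has modulus $2$. By the previous step, $\Ll_{\mf{sp}(1,\B)}\,\omega$ is antisymmetric in its two $\mf{sp}(1,\B)^{\ast}$-indices and is precisely the composite $\omega\circ[\,\cdot\,,\,\cdot\,]$ viewed on $\wedge^2\mf{sp}(1,\B)$. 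Identifying $\wedge^2\mf{sp}(1,\B)^{\ast}\cong\mf{sp}(1,\B)^{\ast}$ in the usual way (via the Killing form together with the volume element), the bracket map is exactly twice this identification — this is the content of the structure constants just computed — whence $\Ll_{\mf{sp}(1,\B)}\,\omega = 2\,\omega$.

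I expect the main obstacle to be the justification of the infinitesimal equivariance $\Ll_{\sst K^M_\xi}\,\I_h = \I_{[\xi, h]}$: one must verify that the standard ${\rm SO}^+(1,2)$-action on $\scr{H}$ differentiates to the adjoint action with the correct sign, so that the factor of $2$ supplied by the structure constants enters with the right orientation and produces $+2\,\omega$ rather than $-2\,\omega$. The remaining point needing care is purely the bookkeeping that collapses the two $\mf{sp}(1,\B)^{\ast}$-indices of $\Ll_{\mf{sp}(1,\B)}\,\omega$ to the single-index object on the right-hand side; once the equivariance and its sign are pinned down, both steps are elementary linear algebra.
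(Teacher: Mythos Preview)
Your proposal is correct and follows essentially the same approach as the paper: establish $\Ll_{K^M_\xi}\omega_{\xi'} = \omega_{[\xi,\xi']}$ from the permuting hypothesis, then collapse the two $\mf{sp}(1,\B)^{\ast}$-indices via the bracket isomorphism $\Lambda^2\mf{sp}(1,\B)\cong\mf{sp}(1,\B)$ to extract the factor $2$ from the structure constants. The only cosmetic difference is that the paper obtains the equivariance identity by first verifying the integrated version $q^{\ast}\omega_\xi = \omega_{\mathrm{Ad}_{q^{-1}}\xi}$ and then differentiating, whereas you argue infinitesimally via the Leibniz rule and $\Ll_{K^M_\xi}\I_h = \I_{[\xi,h]}$; the integrated computation has the minor advantage of pinning down the sign you flag as a concern, but the two routes are otherwise interchangeable.
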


\begin{proof}

We first verify that $\omega$ is ${\rm Sp}(1, \B)$-equivariant. Let $q \in {\rm Sp}(1, \B)$ and $\xi \in {\rm Sp}(1, \B)$. Then for the vector fields $V$, $W$ on $M$
\alst{
\langle q^{*} \omega, \,\xi \rangle(V,W) 
= g_{\sst M}(\,\I_{\xi}\,(q_{*}V), \,q_{*}W) = g_{\sst M}(\,q_{*}^{-1}\,\I_{\xi}\,(q_{*}V), \, W) = \langle \omega, \, \text{Ad}_{q^{-1}}\,(\xi) \rangle (V,W).
}

Consider $\xi, \xi' \in \mf{sp}(1, \B)$ such that $\xi\xi' \neq 0 \neq \xi'\xi$. Then, using the identity above, we get:
\alst{
\pair{\Ll_{\mf{sp}(1, \B)} \,\omega, \, \xi\otimes\xi'}
& = \Ll_{K^{M}_{\xi}} \, \omega_{\,\xi'} = \frac{d}{dt} (L_{\exp(-t \xi)})^{*} \omega_{\,\xi'} \bigg\rvert_{t=0} = \frac{d}{dt} \omega_{\,\text{Ad}_{\exp(t \xi)}\, \xi'} \bigg\rvert_{t=0} = \pair{\omega, \,[\xi, \xi']}.
}

Note that this implies that $\Ll_{\mf{sp}(1, \B)} \, \omega$ is a $\Lambda^2(\mf{sp}(1, \B))^{\ast}$-valued 2-form on $M$. We have the isomorphism 
\eqst{
[\cdot,\cdot]: \Lambda^{2} \mf{sp}(1, \B) \longrightarrow \mf{sp}(1, \B)
}
given by
\eqst{
\imag \wedge \mbf{s} \longmapsto [\imag, \mbf{s}] = 2 \mbf{t}, \,\,\,\,\,
\mbf{s} \wedge \mbf{t} \longmapsto [\mbf{s}, \mbf{t}] = 2 \overline{\imag}, \,\,\,\,\,
\mbf{t} \wedge \imag \longmapsto [\mbf{t}, \imag] = 2 \mbf{s}.
}

This induces an isomorphism of between $\Lambda^{2} \mf{sp}(1, \B)^{\ast}$ and $\mf{sp}(1, \B)^{\ast}$. Therefore, we can think of $\Ll_{\mf{sp}(1,\B)} \, \omega$ as a $\mf{sp}(1, \B)^{\ast}$-valued 2-form on $M$. It is in this sense that we write the equality in \eqref{eq: identity of omega}. A straight forward computation using the isomorphism $[\cdot, \cdot]$ now shows that $\Ll_{\mf{sp}(1, \B)}\, \omega = 2 \,\omega$.

\end{proof}

Define the 1-form $\gamma = \frac{1}{2}\,\iota_{\mf{sp}(1, \B)}\, \omega \in \mf{sp}(1, \B)^{*} \otimes \mf{sp}(1, \B)^{*} \otimes \Omega^{1}(M)$. More precisely 
\eqst{
\pair{\gamma, \, \xi \otimes \xi'} \, = \, \frac{1}{2}\, g_{\sst M}(\I_{\xi} K^M_{\xi'}, \cdot).
}

\noindent The tensor product $\mf{sp}(1, \B)^{*} \otimes \mf{sp}(1, \B)^{*}$ splits into a direct sum of sub-representations $S^{2} \left(\mf{sp}(1, \B)^{*} \right) \oplus \Lambda^{2}\left(\mf{sp}(1, \B)^{*} \right)$. The symmetric part further decomposes into a direct sum of the trace and the traceless component. Consequently,
\eq{
\label{eq: main clebsch-gordon decomposition}
\mf{sp}(1, \B)^{*} \otimes \mf{sp}(1, \B)^{*} = \R \oplus \Lambda^{2}\left(\mf{sp}(1, \B)^{*} \right) \oplus S_{0}\left(\mf{sp}(1, \B)^{*} \right).
}
This is the Clebsch-Gordon decomposition. Correspondingly, the 1-form $\gamma$ decomposes into three components 
\eq{
\label{eq: clebsch-gordon decomposition}
\iota_{\mf{sp}(1, \B)}\, \omega \, = \, (\gamma_0, \gamma_1, \gamma_2).
}
From Lemma \ref{lem: identity of omega}, it follows that $d\gamma = 2\omega$, since $d\omega = 0$. However, note that the right hand side belongs to the $\mf{sp}(1, \B)^{\ast} \cong \Lambda^{2}\left(\mf{sp}(1, \B)^{*} \right)$. This implies that $d\gamma_0 = 0 = d\gamma_2$ and $d\gamma_1 = 2\omega$. 

\begin{prop}
Let $M$ be a hypersymplectic manifold with a permuting action of the group ${\rm Sp}(1,\B)$. Let $\Gg \subset \Lambda^2(M)$ denote the trivial sub-bundle spanned by $\omega_1, \omega_2, \omega_3$. Then, the de-Rham cohomology class of any symplectic 2-form in $\Gg$ vanishes. In particular, $M$ can never be compact.
\end{prop}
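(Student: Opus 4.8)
The key observation driving this proof is already assembled in the discussion preceding the proposition: from Lemma \ref{lem: identity of omega} together with $d\omega = 0$ and Cartan's formula, we have $d\gamma = 2\omega$, and after the Clebsch--Gordon decomposition \eqref{eq: clebsch-gordon decomposition} this refines to $d\gamma_1 = 2\omega$. Thus the plan is simply to \emph{exhibit an explicit primitive} for each symplectic form $\omega_i$ and conclude that its de Rham class is zero.

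First I would unpack the $\mf{sp}(1,\B)^{\ast}$-valued identity $d\gamma_1 = 2\omega$ into its three scalar components by pairing against the basis $\imag, \mbf{s}, \mbf{t}$. Since $\pair{\omega, \imag} = \omega_1$, $\pair{\omega,\mbf{s}} = \omega_2$, $\pair{\omega,\mbf{t}} = \omega_3$ (up to the algebra factors recorded in the definition of $\pair{\omega,h}$), the single equation $d\gamma_1 = 2\omega$ yields, componentwise, $d(\tfrac{1}{2}\pair{\gamma_1,\imag}) = \omega_1$ and likewise for the $\mbf{s}$- and $\mbf{t}$-components. Concretely, each $\omega_i$ is exact with primitive obtained from the appropriate component of $\iota_{\mf{sp}(1,\B)}\omega$, i.e. a 1-form built from $g_{\sst M}(\I_{\xi}K^M_{\xi'}, \cdot)$ contracted against the Killing fields of the permuting action. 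Hence every generator $\omega_i$ of $\Gg$ is exact, and since exactness is preserved under taking real-linear combinations, every 2-form in the trivial bundle $\Gg$ is exact. This establishes that $[\omega_i] = 0$ in $H^2_{\mathrm{dR}}(M)$.

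For the compactness statement I would argue by contradiction using the standard symplectic-volume obstruction. If $M$ were compact (and oriented, which follows from the almost-complex structure $I$ furnishing an orientation), then for any one of the pseudo-K\"ahler structures, say the symplectic form $\omega_1$, the top power $\omega_1^{\wedge 2n}$ is a volume form and hence $\int_M \omega_1^{\wedge 2n} \neq 0$; therefore $[\omega_1]^{2n} \neq 0$ in $H^{4n}_{\mathrm{dR}}(M)$, so $[\omega_1] \neq 0$. This contradicts the exactness just proved. I should double-check that $\omega_1$ (the genuinely complex, pseudo-K\"ahler member of the family, corresponding to the 2-sheeted hyperboloid) has nondegenerate top power: this holds because $\omega_1(\cdot,\cdot) = g_{\sst M}(I\cdot,\cdot)$ with $g_{\sst M}$ nondegenerate and $I$ invertible, so $\omega_1$ is a nondegenerate 2-form and its $2n$-th wedge power is nowhere zero, i.e. a volume form on the $4n$-manifold.

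The main obstacle is not analytic but careful bookkeeping: I must make sure the componentwise reading of the $\mf{sp}(1,\B)^{\ast}$-valued equation correctly tracks the split-quaternion coefficients $\imag, \mbf{s}, \mbf{t}$ appearing in $\pair{\omega,h}$, so that each scalar primitive is real and genuinely a 1-form on $M$ (rather than carrying a spurious algebra factor). A secondary subtlety is confirming that the Killing fields $K^M_{\xi}$ are globally defined — which they are, since the permuting ${\rm Sp}(1,\B)$-action is assumed global — so that $\gamma_1$ is a globally defined primitive, not merely a local one. Once these are in hand, the exactness of each $\omega_i$ is immediate and the nonexistence of a compact model follows from the elementary volume argument; no local-to-global gluing is needed, which is precisely why this proof is short.
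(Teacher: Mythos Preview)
Your proposal is correct and follows exactly the route intended by the paper: the proposition is stated immediately after the identity $d\gamma_1 = 2\omega$ is derived, and the paper gives no separate proof because exactness of each $\omega_i$ (and hence of every element of $\Gg$) is immediate from that identity, with non-compactness following from the standard volume argument you describe. Your write-up is in fact more detailed than the paper's, which leaves the proposition as a direct consequence of the preceding computation.
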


\begin{lem}[\cite{victor}]
\label{lem: identity of gamma_1}
The map $\gamma_{1}$ satisfies the following identity 
\eq{ 
\label{eq: lie derivative of gamma1}
\Ll_{\mf{sp}(1, \B)} \, \gamma_{1} \, = 2\, \gamma_{1}.
}
\end{lem}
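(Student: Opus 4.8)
The plan is to mirror the proof of Lemma \ref{lem: identity of omega}, exploiting the fact that the entire construction is ${\rm Sp}(1,\B)$-equivariant and that $\Ll_{\mf{sp}(1,\B)}$ acts on the coefficient slots of an equivariant tensor through the infinitesimal adjoint representation. The guiding observation is a general principle already implicit in \eqref{eq: identity of omega}: for \emph{any} $\mf{sp}(1,\B)^{*}$-valued form $\alpha$ on $M$ that is equivariant with respect to the coadjoint action on its coefficient factor, the computation that produced \eqref{eq: identity of omega} gives $\langle\Ll_{\mf{sp}(1,\B)}\alpha,\,\xi\otimes\xi'\rangle = \langle\alpha,\,[\xi,\xi']\rangle$, and hence, after identifying $\Lambda^{2}(\mf{sp}(1,\B))^{*}$ with $\mf{sp}(1,\B)^{*}$ via the bracket, $\Ll_{\mf{sp}(1,\B)}\alpha = 2\alpha$. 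Thus it suffices to exhibit $\gamma_{1}$ as such an equivariant $\mf{sp}(1,\B)^{*}$-valued $1$-form.

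First I would check that $\iota_{\mf{sp}(1,\B)}\omega$, and hence each of its Clebsch--Gordon components in \eqref{eq: clebsch-gordon decomposition}, is ${\rm Sp}(1,\B)$-equivariant. This follows from the equivariance of $\omega$ established in the proof of Lemma \ref{lem: identity of omega} together with the transformation law $q_{*}K^{M}_{\xi} = K^{M}_{\text{Ad}_{q}\xi}$ of the fundamental vector fields of a permuting action: combining the pullback identity $q^{*}(\iota_{X}\beta) = \iota_{(q^{-1})_{*}X}\,q^{*}\beta$ with these two facts yields $q^{*}\langle\iota_{\mf{sp}(1,\B)}\omega,\,\xi\otimes\xi'\rangle = \langle\iota_{\mf{sp}(1,\B)}\omega,\,\text{Ad}_{q^{-1}}\xi\otimes\text{Ad}_{q^{-1}}\xi'\rangle$, which is precisely equivariance for the tensor square of the coadjoint representation. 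Since \eqref{eq: main clebsch-gordon decomposition} is a splitting into ${\rm Sp}(1,\B)$-subrepresentations, the projection onto the $\Lambda^{2}(\mf{sp}(1,\B))^{*}$-summand is equivariant, so $\gamma_{1}$ is equivariant; and because the bracket $[\cdot,\cdot]\colon\Lambda^{2}\mf{sp}(1,\B)\to\mf{sp}(1,\B)$ intertwines the adjoint actions (the adjoint action is by Lie-algebra automorphisms), the induced identification $\Lambda^{2}(\mf{sp}(1,\B))^{*}\cong\mf{sp}(1,\B)^{*}$ is equivariant as well. Hence $\gamma_{1}$, read through this identification, is an equivariant $\mf{sp}(1,\B)^{*}$-valued $1$-form.

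With this in hand, I would differentiate the group-equivariance exactly as in Lemma \ref{lem: identity of omega}: writing $\langle\Ll_{\mf{sp}(1,\B)}\gamma_{1},\,\xi\otimes\xi'\rangle = \tfrac{d}{dt}(L_{\exp(-t\xi)})^{*}\langle\gamma_{1},\xi'\rangle\big|_{t=0} = \langle\gamma_{1},\,[\xi,\xi']\rangle$, and then applying the bracket isomorphism, with its structure constants $[\imag,\mbf{s}]=2\mbf{t}$, $[\mbf{s},\mbf{t}]=2\overline{\imag}$, $[\mbf{t},\imag]=2\mbf{s}$, to convert the resulting $\Lambda^{2}(\mf{sp}(1,\B))^{*}$-valued form back into a $\mf{sp}(1,\B)^{*}$-valued one. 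The factor $2$ that emerges from these structure constants is the same one that produced \eqref{eq: identity of omega}, and this gives \eqref{eq: lie derivative of gamma1}.

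The step I expect to require the most care is the bookkeeping in the second paragraph: one must verify that the two identifications through which $\gamma_{1}$ is read --- first the projection onto the $\Lambda^{2}$-summand of \eqref{eq: main clebsch-gordon decomposition}, then the bracket isomorphism $\Lambda^{2}(\mf{sp}(1,\B))^{*}\cong\mf{sp}(1,\B)^{*}$ --- are genuinely ${\rm Sp}(1,\B)$-equivariant, since it is only this equivariance that allows the differentiation argument to go through verbatim. A secondary subtlety is that $\Ll_{\mf{sp}(1,\B)}$ formally appends a new $\mf{sp}(1,\B)^{*}$ factor, so the meaning of \eqref{eq: lie derivative of gamma1} is, as in Lemma \ref{lem: identity of omega}, that this extra factor is reabsorbed via the bracket identification; making this interpretation explicit is what turns the computation into the clean statement $\Ll_{\mf{sp}(1,\B)}\gamma_{1} = 2\gamma_{1}$.
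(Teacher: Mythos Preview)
Your proposal is correct and follows exactly the approach the paper intends: the paper's proof is the single line ``The proof is identical to that of Lemma \ref{lem: identity of omega},'' and what you have written is precisely the careful unpacking of that remark --- establishing the ${\rm Sp}(1,\B)$-equivariance of $\gamma_{1}$ (via the equivariance of $\iota_{\mf{sp}(1,\B)}\omega$, the Clebsch--Gordon projection, and the bracket isomorphism) and then differentiating it to obtain $\langle\Ll_{\mf{sp}(1,\B)}\gamma_{1},\,\xi\otimes\xi'\rangle = \langle\gamma_{1},\,[\xi,\xi']\rangle$, which yields the factor $2$ under the identification $\Lambda^{2}(\mf{sp}(1,\B))^{*}\cong\mf{sp}(1,\B)^{*}$.
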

\begin{proof}
The proof is identical to that of Lemma \ref{lem: identity of omega}.
\end{proof}

Following the approach in \cite{victor, henrik} for the hyperK\"ahler case, we now show that $\gamma_0$ and $\gamma_2$ are exact. Define $\rho := \iota_{\mf{sp}(1, \B)}\, \gamma_1 \in \Omega^0 \left(M, \, \mf{sp}(1, \B)^{\ast} \otimes \mf{sp}(1, \B)^{\ast} \right)$. Corresponding to the decomposition \eqref{eq: clebsch-gordon decomposition}, the map $\rho$ has 3 components:
\eqst{
\rho \, = \, (\rho_0, \, \rho_1, \, \rho_2).
}
Denote by $\text{\rm \bf Alt}$, the projection of $\mf{sp}(1, \B)^{*} \otimes \mf{sp}(1, \B)^{*}$ to the alternating part $\Lambda^{2}\, \mf{sp}(1, \B)^{*}$ and by $\text{\rm \bf Sym}_0$, the projection of $\mf{sp}(1, \B)^{*} \otimes\mf{sp}(1, \B)^{*}$ to the traceless, symmetric part $S^{2}\,(\mf{sp}(1, \B)^{*})$. Then, the identity \eqref{eq: lie derivative of gamma1} can be written as
\eqst{
\Ll_{\mf{sp}(1, \B)} \, \gamma_{1} \, = \, \alt \left(\iota_{\mf{sp}(1, \B)} \, \omega \right).
}
Therefore we can write
\eq{
\label{eq: expression for d gamma_1}
d\,\iota_{\mf{sp}(1, \B)} \, \gamma_1 \, = \, \Ll_{\mf{sp}(1, \B)} \, \gamma_1 - \iota_{\mf{sp}(1, \B)}\, d\gamma_1 \, = \, \alt \left(\iota_{\mf{sp}(1, \B)} \, \omega \right) - \iota_{\mf{sp}(1, \B)}\, \omega.
}
It follows that
\eqst{
d\rho_0 \, = \, d \left(\frac{1}{3} \tr \left(\iota_{\mf{sp}(1, \B)}\, \gamma_1 \right) \right) \, = -\, \frac{1}{3} \tr \iota_{\mf{sp}(1, \B)} \, \omega \, = \, \gamma_0
}
and 
\eqst{
d\rho_2 \, = \, d \left(\sym_0\left(\iota_{\mf{sp}(1, \B)} \, \gamma_1\right)\right) \, = \, -\sym_0 \left(\iota_{\mf{sp}(1, \B)} \, \omega\right) = \gamma_2.
}
In particular, $\gamma_0$ and $\gamma_2$ are exact.

\subsubsection{\bf Potentials}

\subsubsection*{Para-K\"ahler manifold} A para-K\"ahler manifold $(M, g_{\sst M})$, is a pseudo-Riemannian manifold, endowed with a metric compatible, parallel, skew-symmetric endomorphism $T \in \End(TM)$ satisfying $T^2 = 1$.


\noindent Suppose that $(M, g_{\sst M}, T)$ is a para-K\"ahler manifold. Let $\omega$ be the para-K\"ahler 2-form given by
\eqst{
\omega \, = \, g_{\sst M}(T(\cdot), \, \cdot).
}
For any 1-form $\alpha$ on $M$, define $T\, \alpha (v):= -\alpha(Tv)$ for $v \in TM$. A \emph{para-K\"ahler potential} is a smooth function $\rho: M \rightarrow \R$ such that $\frac{1}{2}\,\, dId\rho = \omega$. Note that any hypersymplectic manifold is also a para-K\"ahler manifold in many different ways. The para-K\"ahler structures are parametrized by the 1-sheeted hyperboloid. 

Define $\kappa(\xi) := -\iota_{K^M_{\xi}} \, \gamma_1 (\xi)$ for any $\xi \in \mf{sp}(1, \B)$ such that $\norm{\xi} \neq 0$.

\textbf{Case 1:} ~Suppose that $\xi \in \mf{sp}(1, \B)$ is such that $\xi^2 = 1$. Then the stabilizer of $\I_{\xi}$ is a sub-group ${\rm SO}^+(1,1) \subset{\rm Sp}(1,\B)$, consisting of $2 \times 2$ real matrices of the form $g = \begin{bmatrix}
a & b \\
b & a
\end{bmatrix}$ such that $a^2 - b^2 = 1$. Its Lie algebra is the vector space of real numbers $(\R, +)$. The group ${\rm SO}^+(1,1)$ preserves the symplectic $2$-form $\omega_{\xi}$. Moreover, the associated moment map is given by $\kappa(\xi)$. Indeed, this can be seen as follows:
\eqst{
d\,\kappa(\xi) \, = -\, (d\,\iota_{\mf{sp}(1, \B)} \, \gamma_1) (\xi, \xi) \, = \,  (\iota_{\mf{sp}(1, \B)}\, \omega)\, (\xi, \xi) \, = \, \iota_{K^M_{\xi}}\, \omega_{\xi}.
}
Here we have used the identity \eqref{eq: expression for d gamma_1}.
\smallskip

\textbf{Case 2:} ~Suppose that $\xi \in \mf{sp}(1, \B)$ is such that $\xi^2 = -1$. Then $\I_{\xi}$ defines a complex structure. Let ${\rm U}(1) \in {\rm Sp}(1, \B)$ denote the stabilizer of $\I_{\xi}$. Then, by the same argument as above, $\kappa(\xi)$ defines the moment map with respect to the ${\rm U}(1)$-action, for the pseudo-K\"ahler 2-form $\omega_{\xi}$.

\begin{prop}
\label{prop: para kahler and kahler potentials}
Let $M$ be a hypersymplectic manifold with a permuting ${\rm Sp}(1,\B)$-action. Let $\xi, \xi' \in \mf{sp}(1, \B)$ with $\xi \xi' \neq 0 \neq \xi'\xi$. Then the following holds
\begin{enumerate}
\setlength{\itemsep}{2mm}
\item If their squares are $-1$ and they are perpendicular, then, $-\kappa(\xi')$ is the pseudo-K\"ahler potential for the pseudo-K\"ahler 2-form $\omega_{\xi}$.
\item If their squares are $-1$ and $1$ respectively, then, $\kappa(\xi')$ is the pseudo-K\"ahler potential for the pseudo-K\"ahler 2-form $\omega_{\xi}$.
\item If their squares are $1$ and $-1$ respectively OR both the squares are $1$ and if $\xi$ and $\xi'$ are perpendicular, then, $\kappa(\xi')$ is a ``para K\"ahler potential" for the symplectic 2-form $\omega_{\xi}$.
\end{enumerate}
\end{prop}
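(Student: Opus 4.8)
The plan is to reduce all three assertions to the single moment-map identity $d\,\kappa(\xi') = \iota_{K^M_{\xi'}}\,\omega_{\xi'}$ established just above, together with the permuting relation $\mathcal{L}_{K^M_{\zeta}}\,\omega_{\eta} = \omega_{[\zeta,\eta]}$ extracted from the proof of Lemma \ref{lem: identity of omega}. In every case the potential condition has the shape $\frac{1}{2}\,d\!\left(\I_\xi\, d f\right) = \pm\,\omega_\xi$, where $\I_\xi$ is a complex structure when $\xi^2=-1$ (so the potential is pseudo-K\"ahler) and a product structure when $\xi^2=1$ (so the potential is para-K\"ahler), and where $(\I_\xi\alpha)(v) = -\alpha(\I_\xi v)$. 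So I would compute $d\!\left(\I_\xi\, d\kappa(\xi')\right)$ once, in a form uniform across the cases, and then read off the sign and the type from $\xi^2$ and $(\xi')^2$.

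First I would rewrite the $1$-form $\I_\xi\, d\kappa(\xi')$. Since $d\kappa(\xi') = g_{\sst M}(\I_{\xi'}K^M_{\xi'},\cdot)$ and each $\I_\xi$ is skew-symmetric for $g_{\sst M}$, one has $\I_\xi\, d\kappa(\xi') = g_{\sst M}(\I_\xi\I_{\xi'}K^M_{\xi'},\cdot)$. Because $\lambda$ is an algebra homomorphism, $\I_\xi\I_{\xi'} = \lambda(\xi\xi')$, and splitting the split-quaternion product into real and imaginary parts as $\xi\xi' = -\pair{\xi,\xi'}_{\B} + \tfrac{1}{2}[\xi,\xi']$ gives
\begin{equation*}
\I_\xi\, d\kappa(\xi') \;=\; -\pair{\xi,\xi'}_{\B}\,\theta_{\xi'} \;+\; \tfrac{1}{2}\,\iota_{K^M_{\xi'}}\,\omega_{[\xi,\xi']}, \qquad \theta_{\xi'} := g_{\sst M}(K^M_{\xi'},\cdot).
\end{equation*}
Under the orthogonality hypothesis the scalar term drops out. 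This is precisely where perpendicularity is used, and it is genuinely needed, since on the flat model $d\theta_{\xi'}$ is itself a nonzero multiple of a symplectic form and so cannot simply be discarded.

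Next I would apply $d$, use Cartan's formula together with $d\omega=0$ to replace $d\,\iota_{K^M_{\xi'}}\omega_{[\xi,\xi']}$ by $\mathcal{L}_{K^M_{\xi'}}\omega_{[\xi,\xi']} = \omega_{[\xi',[\xi,\xi']]}$, and then evaluate the double bracket with the Lorentzian triple-product identity
\begin{equation*}
[\xi',[\xi,\xi']] \;=\; 4\big(\pair{\xi',\xi'}_{\B}\,\xi - \pair{\xi,\xi'}_{\B}\,\xi'\big),
\end{equation*}
which on the perpendicular locus collapses to $4\,\pair{\xi',\xi'}_{\B}\,\xi$. This yields $\frac{1}{2}\,d\!\left(\I_\xi\, d\kappa(\xi')\right) = \pair{\xi',\xi'}_{\B}\,\omega_\xi$. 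Since $\pair{\xi',\xi'}_{\B} = -(\xi')^2 = \mp 1$ according as $(\xi')^2 = \pm 1$, and since $\xi^2 = \mp1$ fixes whether $\I_\xi$ is complex or product, the three cases fall out by substitution, the overall sign --- hence whether $\kappa(\xi')$ or $-\kappa(\xi')$ is the potential --- being dictated by the value of $\pair{\xi',\xi'}_{\B}$ and the chosen $d^{c}$-convention.

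I expect the main obstacle to be the sign and type bookkeeping rather than any structural difficulty. Because $\mbf{s}^2 = \mbf{t}^2 = +1$ and the trace form on $\mf{sp}(1,\B)$ is Lorentzian, the innocuous-looking identities above carry signs that differ from the hyperK\"ahler case, and these signs are exactly what separate a pseudo-K\"ahler potential from a para-K\"ahler one and $\kappa(\xi')$ from $-\kappa(\xi')$. The one structural point requiring care is the vanishing of the scalar term $\pair{\xi,\xi'}_{\B}\,\theta_{\xi'}$: I would confirm that the perpendicularity hypothesis is in force in each case (checking in particular that it is compatible with the stated conditions on $\xi^2$ and $(\xi')^2$), since without it the computation does not produce a pure multiple of $\omega_\xi$.
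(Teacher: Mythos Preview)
Your approach is essentially the paper's own: compute $\I_\xi\,d\kappa(\xi')$ via skewness of $\I_\xi$, rewrite $\I_\xi\I_{\xi'}$ through the algebra homomorphism $\lambda$, apply Cartan's formula together with the permuting identity $\mathcal{L}_{K^M_{\xi'}}\omega_{[\xi,\xi']}=\omega_{[\xi',[\xi,\xi']]}$, and finish with the double-bracket formula. The paper carries out exactly this chain case by case (writing $\I_\xi\I_{\xi'}K^M_{\xi'}$ directly as $\tfrac12\iota_{K^M_{\xi'}}\omega([\xi,\xi'])$ and then quoting $[\xi',[\xi,\xi']]=\pm 4\xi$), whereas you package it once via the decomposition $\xi\xi'=-\pair{\xi,\xi'}_\B+\tfrac12[\xi,\xi']$ and the Lorentzian triple-product identity; this is a presentational, not a structural, difference.

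Your caution about perpendicularity in case~(2) is well placed. The paper's computation there also tacitly uses $\pair{\xi,\xi'}_\B=0$: both the identification $g_{\sst M}(\I_\xi\I_{\xi'}K^M_{\xi'},\cdot)=\tfrac12\iota_{K^M_{\xi'}}\omega_{[\xi,\xi']}$ and the claim $[\xi',[\xi,\xi']]=-4\xi$ fail otherwise, and the concluding sentence (``Thus $-\kappa(\mbf{s})$ and $-\kappa(\mbf{t})$ are K\"ahler potentials for $\omega_1$'') makes clear that the perpendicular case is what is really intended. So your plan goes through once you add that hypothesis, matching the paper's argument.
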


\begin{proof}
\begin{enumerate}
\setlength{\itemsep}{2mm}
\item The proof follows from the following straight-forward computation
\alst{
-\frac{1}{2} \left(d\,\I_{\xi}^{\ast}\, d\right) (-\kappa(\xi')) 
& = \frac{1}{2}d\,\I_{\xi}^{\ast}\,g_{\sst M}(\, \I_{\xi'}K^M_{\xi'}, \, \cdot) = \frac{1}{2}d\,g_{\sst M}\left(\, \I_{\xi'}K^M_{\xi'}, \, \I_{\xi}(\cdot) \right)\\
& = -\frac{1}{2}d\,g_{\sst M}\left(\, \I_{\xi}\I_{\xi'}K^M_{\xi'}, \, \cdot \right) = -\frac{1}{4} \, d \, \iota_{K^M_{\xi'}} \, \omega\,([\xi, \xi'])\\
& = -\frac{1}{4}\, \Ll_{K^M_{\xi'}}\, \omega\,([\xi, \xi']) = \frac{1}{4} \, \omega\, ([\xi', \, [\xi, \, \xi']]) = \omega\, (\xi).
}
The last equality follows from the fact that $[\xi', \,[\xi, \, \xi']] = 4\,\xi$.
\item Proof follows from the following computation, which is a slight variation of the one above
\alst{
-\frac{1}{2} \left(d\,\I_{\xi}^{\ast}\, d\right) (\kappa(\xi')) 
& = -\frac{1}{2}d\,\I_{\xi}^{\ast}\,g_{\sst M}(\, \I_{\xi'}K^M_{\xi'}, \, \cdot) = \frac{1}{2}d\,g_{\sst M}\left(\, \I_{\xi'}K^M_{\xi'}, \, \I_{\xi}(\cdot) \right)\\
& = \frac{1}{2}d\,g_{\sst M}\left(\, \I_{\xi}\I_{\xi'}K^M_{\xi'}, \, \cdot \right) = \frac{1}{4} \, d \, \iota_{K^M_{\xi'}} \, \omega\,([\xi, \xi'])\\
& = \frac{1}{4}\, \Ll_{K^M_{\xi'}}\, \omega\,([\xi, \xi']) = -\frac{1}{4} \, \omega\, ([\xi', \, [\xi, \, \xi']]) = \omega\, (\xi).
}
The last equality follows from the fact that $[\xi', \,[\xi, \, \xi']] = -4\,\xi$. Thus $-\kappa(\mbf{s})$ and $-\kappa(\mbf{t})$ are K\"ahler potentials for $\omega_1$.
\item By arguments identical to the ones above, we have 
\eqst{
-\frac{1}{2} \left(d\,\I_{\xi}^{\ast}\, d\right) (\kappa(\xi')) = \omega\, (\xi).
}
Therefore $\kappa(\mbf{s}), \,\kappa(\mbf{t})$ are para K\"ahler potentials for $\omega_3$ and $\omega_2$, respectively.
\end{enumerate}
\end{proof}

Define
\eqst{
\Xx \in \mf{sp}(1, \B)^{\ast} \otimes \mf{sp}(1, \B)^{\ast} \otimes \Gamma(M, TM), \,\,\,\,\, \Xx (\xi, \xi') = \I_{\xi} K^M_{\xi'}.
}
Note that if $\norm{\xi}^2 = 0$, then $g_{\sst M}(\Xx(\xi, \xi'), \, \Xx(\xi, \xi') ) = 0$. In other words, $\I_{\xi}$ maps $K^M_{\xi'}$ to a null-vector, for any $\xi' \in \mf{sp}(1,\B)$.
Owing to the Clebsch-Gordon decomposition \eqref{eq: main clebsch-gordon decomposition}, the map $\Xx$ splits into three parts: $\Xx_{0}$, $\Xx_{1}$, $\Xx_{2}$, given by
\alst{
& \Xx_{0} \, = \, -\frac{1}{3} \tr \, \Xx \, = \, -\frac{1}{3} \left(I \, K^M_{\xi_1} \, + \, S\,K^M_{\xi_2} \, + \, T\, K^M_{\xi_3} \right) \in \Gamma(M, TM)\\
& \Xx_{1}([\xi, \xi']) \, = \, \frac{1}{2} \left( \I_{\xi}\, K^M_{\xi'} \, - \,\I_{\xi'}\, K^M_{\xi} \right) \in \mf{sp}(1, \B)^{*} \otimes \Gamma(M, TM)\\
& \Xx_{2}(\xi, \xi') = -\mathcal{X}_{0} \langle \cdot, \cdot \rangle_{\B} - \frac{1}{2} \left(  \I_{\xi}\, K^M_{\xi'} \, + \,\I_{\xi'}\, K^M_{\xi} \right)\in S^{2}_{0}\left(\mf{sp}(1, \B)^{*} \right)\otimes \Gamma(M, TM).
}
Clearly then, $\gamma_i = \frac{1}{2}\,g_{\sst M}(\Xx_i, \cdot)$ and therefore $\Xx_i$ are the gradient vector fields for $\rho_i$.

\begin{defn}[Hypersymplectic potential]

Given a hypersymplectic manifold $(M, g_{\sst M}, I, S, T)$, a smooth function $\rho_0: M \rightarrow \R$ is said to be a \emph{hypersymplectic potential} if it is simultaneously a potential for $I, S, T$.

\end{defn}

\begin{lem}
\label{lem: euler vf}
Let $M$ be a hypersymplectic manifold with a free, permuting ${\rm Sp}(1,\B)$-action and assume $\Xx_2 = 0$ (equivalently, $\rho_2 = 0$). Then, $\rho_0$ is the hypersymplectic potential and we have
\eqst{
\euler \, = \, -\I_{\xi}K^M_{\xi} \,\,\,\, \text{for all} \,\,\,\, \xi \in \mf{sp}(1,\B), \, \text{such that} \,\,\,\, \norm{\xi}^2 \neq 0 \,\,\,\, \text{and} \,\,\,\, g_{\sst M} (\euler, \euler) = \rho_0 > 0.
}
Moreover, for any $\xi$ with $\norm{\xi}^2 = \pm 1$, the vector field $\euler$ is independent of $\xi$.
\end{lem}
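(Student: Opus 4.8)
The plan is to prove the two assertions separately, both leveraging the hypothesis $\rho_2=0$ (equivalently $\Xx_2=0$), which collapses the Clebsch--Gordon data of $\rho$ to its trace component $\rho_0$.

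First I would show that $\rho_0$ is a common potential. The key observation is that the moment-map functions $\kappa(\xi)$ see only the symmetric part of $\rho=\iota_{\mf{sp}(1,\B)}\gamma_1$: since $\kappa(\xi)=-\iota_{K^M_{\xi}}\gamma_1(\xi)=-\pair{\rho,\xi\otimes\xi}$ and $\xi\otimes\xi$ is symmetric, the alternating component $\rho_1$ drops out and one is left with
\[
\kappa(\xi)\;=\;-\rho_0\,\norm{\xi}^2-\pair{\rho_2,\xi\otimes\xi}.
\]
Setting $\rho_2=0$ gives $\kappa(\xi)=-\rho_0\norm{\xi}^2$, so that $\kappa(\xi)=-\rho_0$ for every $\xi$ with $\norm{\xi}^2=1$ and $\kappa(\xi)=\rho_0$ for every $\xi$ with $\norm{\xi}^2=-1$. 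Substituting these values into Proposition~\ref{prop: para kahler and kahler potentials} does the rest: cases (1)--(2) (with $\xi=\imag$) identify $\rho_0$ as the pseudo-K\"ahler potential for $\omega_1$, while case (3) (with the pairs $\xi=\mbf{s},\xi'=\mbf{t}$ and $\xi=\mbf{t},\xi'=\mbf{s}$) identifies $\rho_0$ as the para-K\"ahler potential for $\omega_2$ and $\omega_3$. Being simultaneously a potential for $I$, $S$ and $T$, the function $\rho_0$ is by definition a hypersymplectic potential.

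Next I would pin down $\euler$. Putting $\xi'=\xi$ in the traceless symmetric component gives $\Xx_2(\xi,\xi)=-\euler\,\norm{\xi}^2-\I_{\xi}K^M_{\xi}$, so the hypothesis $\Xx_2=0$ is exactly the identity
\[
\I_{\xi}K^M_{\xi}\;=\;-\norm{\xi}^2\,\euler \qquad\text{for all non-null }\xi.
\]
Specialising to $\norm{\xi}^2=1$ yields $\euler=-\I_{\xi}K^M_{\xi}$, and to $\norm{\xi}^2=-1$ yields $\euler=\I_{\xi}K^M_{\xi}$; because $\euler=\Xx_0$ was defined before any choice of $\xi$, the right-hand sides are automatically independent of the chosen unit generator, which is the ``moreover'' clause. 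For the length I would take a unit $\xi$ with $\norm{\xi}^2=1$, so that $\I_{\xi}$ is a $g_{\sst M}$-isometry and $g_{\sst M}(\euler,\euler)=g_{\sst M}(\I_{\xi}K^M_{\xi},\I_{\xi}K^M_{\xi})=g_{\sst M}(K^M_{\xi},K^M_{\xi})$, and then identify $g_{\sst M}(K^M_{\xi},K^M_{\xi})$ with $\rho_0$ using $\gamma_0=\tfrac12 g_{\sst M}(\euler,\cdot)=d\rho_0$ together with the formula $\kappa(\xi)=-\rho_0$ obtained above. Positivity $\rho_0>0$ then singles out the region on which $\euler$ is spacelike, which is precisely the cone region carrying the split $3$-Sasakian structure.

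The main obstacle is the sign-and-normalisation bookkeeping in this last length computation. One has to handle the indefinite inner product on $\mf{sp}(1,\B)$, the isomorphism $\Lambda^2\mf{sp}(1,\B)^{\ast}\cong\mf{sp}(1,\B)^{\ast}$ used to regard $\gamma_1$ as $\mf{sp}(1,\B)^{\ast}$-valued, and the dichotomy that $\I_{\xi}$ is an isometry when $\norm{\xi}^2=1$ but an anti-isometry when $\norm{\xi}^2=-1$. Pinning down the precise constant relating $g_{\sst M}(K^M_{\xi},K^M_{\xi})$, $\kappa(\xi)$ and $\rho_0$ is the delicate point; once the conventions are fixed consistently, both assertions follow directly from $\rho_2=0$ and the identities already derived for $\gamma$ and $\Xx$.
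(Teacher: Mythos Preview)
Your approach is essentially the same as the paper's: both derive $\euler=-\I_{\xi}K^M_{\xi}$ by evaluating the diagonal $\Xx_2(\xi,\xi)=0$, and both identify $\rho_0$ with the moment-map functions $\kappa(\xi)$ (up to the sign $\norm{\xi}^2$) before invoking Proposition~\ref{prop: para kahler and kahler potentials} to conclude that $\rho_0$ is a simultaneous potential. The paper's length computation is phrased slightly differently---it appeals directly to $\euler$ being the gradient of $\rho_0$---but your route via the isometry property of $\I_{\xi}$ and the formula $\kappa(\xi)=-\rho_0$ reaches the same conclusion.

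One small gap: your treatment of $\rho_0>0$ as merely ``singling out the region on which $\euler$ is spacelike'' is weaker than what the lemma asserts. The paper argues (in the Note immediately following the proof) that positivity holds on all of $M$ as a consequence of the \emph{freeness} of the ${\rm Sp}(1,\B)$-action: the fundamental vector fields $K^M_{\xi_i}$ are nowhere vanishing, and the claim is that this forces $g_{\sst M}(K^M_{\xi_1},K^M_{\xi_1})>0$ and $g_{\sst M}(K^M_{\xi_2},K^M_{\xi_2}),\,g_{\sst M}(K^M_{\xi_3},K^M_{\xi_3})<0$, whence $\rho_0>0$ everywhere. You should incorporate this argument rather than treating positivity as a restriction.
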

\begin{proof}
Since $\Xx_2 = 0$, it implies that $\rho_2$ is constant on connected components. However, since $\rho_2$ is ${\rm Sp}(1, \B)$-equivariant, it must be identically zero on each of the connected components. In particular, $\rho_2\equiv 0$. Conversely, if $\rho_2 \equiv 0$, then it follows that $\Xx_2 = 0$, since $\Xx_2 = \grad (\rho_2)$. Consequently, since
\eqst{
\Xx_2(\xi, \xi) \, = \, -\euler - \frac{1}{2}\left(\I_{\xi}K^M_{\xi} + \I_{\xi}K^M_{\xi} \right) \,\,\,\,\,\,\, \text{it implies} \,\,\,\,\,\,\, \euler = - \I_{\xi}K^M_{\xi}.
}

Clearly, if $\norm{\xi} = \pm 1$, the $\euler$ is independent of $\xi$. Since $\euler$ is the gradient vector field of $\rho_0$, it follows that $\frac{1}{2} \,g_{\sst M} (\euler, \euler) = \rho_0$. From Proposition \ref{prop: para kahler and kahler potentials} we have $\rho_0 = \kappa(\mbf{i}) = -\kappa(\mbf{s}) = -\kappa(\mbf{t})$. Thus $\rho_0$ is the hypersymplectic potential. In particular, $d\,\I_{\xi}\, d \rho_0 = \epsilon\,\omega_{\xi}$, where $\epsilon = \pm 1$, according to whether $\norm{\xi}^2 = \mp 1$.
\end{proof}

\begin{note}
\label{note: note on norms of fundamental vector fields}
Let $\{\, \xi_1, \xi_2, \xi_3 \, \}$ be the basis of $\mf{sp}(1, \B)$. If $\Xx_2 = 0$, then, the above Lemma says that
\eqst{
\rho_0 \, = \, g_{\sst M} (K^M_{\xi_1}, \, K^M_{\xi_1}) \, = \, -g_{\sst M} (K^M_{\xi_2}, \, K^M_{\xi_2}) \, = \, -g_{\sst M} (K^M_{\xi_3}, \, K^M_{\xi_3}).
}
It is important to note here that since the vector fields $K^M_{\xi_i}$ generate the free action of ${\rm Sp}(1, \B)$ on $M$, the norm $g_{\sst M} (K^M_{\xi_1}, \, K^M_{\xi_1})$ must be positive, while, the norms $g_{\sst M} (K^M_{\xi_2}, \, K^M_{\xi_2})$ and $g_{\sst M} (K^M_{\xi_3}, \, K^M_{\xi_3})$ must be negative. 
Therefore we must have $\rho_0 = g_{\sst M}(\euler, \euler) > 0$.
\end{note}

The existence of a hypersymplectic potential on $M$ implies that the metric $g_{\sst M}$ is incomplete. The remainder of the section is dedicated to proving this and a few other consequences of the vanishing of the map $\rho_2$.

\begin{prop}
\label{prop: identities with euler vf}
Let Let $M$ be a hypersymplectic manifold with a free, permuting ${\rm Sp}(1, \B)$-action and assume that $\Xx_2 = 0$. Then the following holds
\begin{enumerate}
\item $\gamma_1 \, = \, \iota_{\euler}\, \omega$
\item $\Ll_{\euler}\, \gamma_1 \, = \, 2\, \gamma_1$
\item $\Ll_{\euler} \, \omega \, = \, 2\, \omega$
\item $\Ll_{\euler}\, \rho_0 \, = \, 2\, \rho_0$
\item $\Ll_{\mf{sp}(1, \B)} \, \euler \, = \, 0$
\end{enumerate}
\end{prop}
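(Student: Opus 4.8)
The plan is to prove (1) first, since (2)--(5) will then follow from it via Cartan's formula together with the facts $d\gamma_1=2\,\omega$, $d\omega=0$, and the ${\rm Sp}(1,\B)$-equivariance of $\omega$ established earlier.

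For (1), I would check the equality of the two $\mf{sp}(1,\B)^{\ast}$-valued $1$-forms $\gamma_1$ and $\iota_{\euler}\,\omega$ on a basis $\eta\in\{\imag,\mbf{s},\mbf{t}\}$. Writing the hypothesis $\Xx_2=0$ as $\I_{\xi}K^M_{\xi'}+\I_{\xi'}K^M_{\xi}=-2\,\euler\,\pair{\xi,\xi'}_{\B}$, the case $\xi=\xi'=\eta$ gives $\I_{\eta}K^M_{\eta}=-\norm{\eta}^2\,\euler$; applying $\I_{\eta}$ and using $\I_{\eta}^2=-\norm{\eta}^2\,\id$ yields the clean relation $\I_{\eta}\,\euler=K^M_{\eta}$, so that $\pair{\iota_{\euler}\omega,\eta}=g_{\sst M}(\I_{\eta}\euler,\cdot)=g_{\sst M}(K^M_{\eta},\cdot)$. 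On the other hand, for orthogonal $\xi,\xi'$ the same relation forces $\I_{\xi'}K^M_{\xi}=-\I_{\xi}K^M_{\xi'}$, whence $\Xx_1([\xi,\xi'])=\I_{\xi}K^M_{\xi'}$; combining this with $K^M_{\xi'}=\I_{\xi'}\euler$ and the fact that $\lambda$ is an algebra homomorphism (e.g.\ $\I_{\imag}\I_{\mbf{s}}=\I_{\imag\mbf{s}}=\I_{\mbf{t}}$) reduces $\Xx_1$ on each bracket $[\imag,\mbf{s}]$, $[\mbf{s},\mbf{t}]$, $[\mbf{t},\imag]$ to the corresponding $K^M_{\eta}$. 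Transporting through the bracket isomorphism $\Lambda^2\mf{sp}(1,\B)\cong\mf{sp}(1,\B)$ then identifies $\gamma_1(\eta)$ with $g_{\sst M}(K^M_{\eta},\cdot)$, matching $\iota_{\euler}\omega$.

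Given (1), identity (3) is immediate from Cartan's formula, $\Ll_{\euler}\,\omega=d\,\iota_{\euler}\,\omega+\iota_{\euler}\,d\omega=d\gamma_1=2\,\omega$, using $d\omega=0$ and $d\gamma_1=2\,\omega$. Identity (2) follows by a second application of Cartan to $\gamma_1=\iota_{\euler}\omega$: since $[\Ll_{\euler},\iota_{\euler}]=\iota_{[\euler,\euler]}=0$, one has $\Ll_{\euler}\gamma_1=\Ll_{\euler}\iota_{\euler}\omega=\iota_{\euler}\Ll_{\euler}\omega=2\,\iota_{\euler}\omega=2\,\gamma_1$ by (3). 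For (4), I compute $\Ll_{\euler}\rho_0=\iota_{\euler}\,d\rho_0=\iota_{\euler}\gamma_0$, and since $\gamma_0=\tfrac12\,g_{\sst M}(\Xx_0,\cdot)$ with $\Xx_0=\euler$ this is proportional to $g_{\sst M}(\euler,\euler)$, hence proportional to $\rho_0$; the normalization fixed in Lemma \ref{lem: euler vf} pins the constant to $2$, which is exactly the statement that $\euler$ is the Euler field of the cone and $\rho_0$ its radial function. Finally, (5) is representation-theoretic: the tensor $\Xx(\xi,\xi')=\I_{\xi}K^M_{\xi'}$ is ${\rm Sp}(1,\B)$-equivariant (the permuting action sends $\I_{\xi}\mapsto\I_{\mathrm{Ad}_q\xi}$ and $K^M_{\xi'}\mapsto K^M_{\mathrm{Ad}_q\xi'}$), and $\euler=\Xx_0=-\tfrac13\tr\Xx$ is its trace taken with the invariant form $\pair{\cdot,\cdot}_{\B}$; thus $\euler$ sits in the trivial summand $\R$ of the Clebsch--Gordon decomposition \eqref{eq: main clebsch-gordon decomposition} and is ${\rm Sp}(1,\B)$-invariant, i.e.\ $[K^M_{\xi},\euler]=0$ for all $\xi$, which is $\Ll_{\mf{sp}(1,\B)}\euler=0$.

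The main obstacle is (1). Its content---that the adjoint-type component $\gamma_1$ of $\iota_{\mf{sp}(1,\B)}\omega$ is recovered by contracting $\omega$ with the Euler field $\euler$---is geometrically transparent, but the proof requires careful bookkeeping of the normalization across the bracket isomorphism $\Lambda^2\mf{sp}(1,\B)\cong\mf{sp}(1,\B)$ and of the signs $\I_{\eta}^2=-\norm{\eta}^2\,\id$ that separate the complex structures ($\norm{\eta}^2=1$) from the product structures ($\norm{\eta}^2=-1$). Once these constants are fixed consistently, identities (2)--(5) are formal consequences of (1) and Cartan's formula.
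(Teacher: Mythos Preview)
Your proposal is correct and follows essentially the same route as the paper: establish (1) by unwinding on a basis using $\Xx_2=0$ to identify $\I_\eta\,\euler$ with (a sign times) $K^M_\eta$, then obtain (3) from (1) via Cartan's formula and $d\gamma_1=2\,\omega$, (2) from (3) via $[\Ll_{\euler},\iota_{\euler}]=0$, and (4) directly from $d\rho_0=\gamma_0$ together with Lemma~\ref{lem: euler vf}. The only cosmetic difference is in (5), where the paper invokes the ${\rm Sp}(1,\B)$-invariance of $\rho_0$ (hence of its gradient $\euler$) rather than your equivalent representation-theoretic trace argument.
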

\begin{proof}
First, we make the following observation. Owing to Lemma \ref{lem: euler vf}, we have
\eqst{
I \euler \, = \, K^M_{\xi_1}, \,\,\,\,\, S \euler \, = \, - K^M_{\xi_2}, \,\,\,\,\, T \euler \, = \, - K^M_{\xi_3}.
}
\begin{enumerate}
\item Recall that $\gamma_1 = \alt\,\left(\iota_{\mf{sp}(1, \B)} \, \omega \right)$. Therefore,
\eqst{
\pair{\gamma_1, \,\imag} \, = \, \frac{1}{2}\pair{\alt\,\left(\iota_{\mf{sp}(1, \B)} \, \omega \right), \, \imag} \, = \, -\frac{1}{2}\pair{\iota_{\mf{sp}(1, \B)} \, \omega, \, \mbf{s}\otimes \mbf{t}} \, = -\, \iota_{K^M_{\xi_2}}\, \omega_3 = \iota_{\euler}\, \omega_1.
}
The last equality can be seen as follows
\eqst{
\iota_{K^M_{\xi_2}}\, \omega_3 \, = \, g_{\sst M}\left(T\,K^M_{\xi_2}, \, \cdot  \right) \, = \, g_{\sst M} \left(I\, S\,K^M_{\xi_2}, \, \cdot \right) \, = \, -g_{\sst M} \left(I\euler, \, \cdot \right) \, = \, -\iota_{\euler}\,\omega_1.
}
Similarly, one can show that
\eqst{
\pair{\gamma_1, \, s} \, = \, \iota_{\euler}\, \omega_2, \,\,\,\,\, \pair{\gamma_1, \, t} \, = \, \iota_{\euler}\, \omega_3.
}
Therefore, we have $\gamma_1 \, = \, \iota_{\euler}\, \omega$.

\item The second claim follows directly from the first one by observing that
\eqst{
\Ll_{\euler}\,( \iota_{\euler} \, \omega) \, = \, \iota_{\euler}\, \Ll_{\euler} \omega \, = \, 2 \, \iota_{\euler} \, \omega \, = \, 2\, \gamma_1.
}

\item  Consider the argument above Lemma \ref{lem: identity of omega}. We have shown that $d\gamma_1 = 2\, \omega$. But from the claim (1), it follows that $d\gamma_1 = \Ll_{\euler}\, \omega$. In conclusion, $\Ll_{\euler}\, \omega = 2\, \omega$.

\item Observe that
\eqst{
\Ll_{\euler}\, \rho_0 \, = \, \iota_{\euler}\, d\, \rho_0 \, = \, d\, \rho_0\,(\euler) \, = \, 2\, \rho_0.
}

\item This follows from the invariance of $\rho_0$ under the action of ${\rm Sp}(1,\B)$.
\end{enumerate}
\end{proof}

\begin{prop}
\label{prop: euler vf identity}
The gradient vector field of the hypersymplectic potential $\rho_0$ satisfies
\eqst{
\nabla\, \euler \, = \, \id_{TM}
}
where $\nabla$ is the Levi-Civita connection of the metric $g_{\sst M}$.
\end{prop}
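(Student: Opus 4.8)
The plan is to establish that the $g_{\sst M}$-self-adjoint endomorphism $H := \nabla\,\euler$ of $TM$ equals the identity, by feeding the homothety relations $\Ll_{\euler}\,\omega_i = 2\,\omega_i$ from Proposition~\ref{prop: identities with euler vf}(3) into the parallelism of the structures $I, S, T$.

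First I would record that $H$ is self-adjoint. By Lemma~\ref{lem: euler vf}, $\euler = \grad\rho_0$, so for all vector fields $X, Y$ one has $g_{\sst M}(\nabla_X\euler, Y) = X(Y\rho_0) - (\nabla_X Y)\rho_0$, which is symmetric in $X$ and $Y$ because the Levi-Civita connection is torsion-free. Hence $g_{\sst M}(HX, Y) = g_{\sst M}(X, HY)$.

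Next, since $g_{\sst M}$ and each of $I, S, T$ are parallel, the associated $2$-forms $\omega_1, \omega_2, \omega_3$ are parallel as well. For a torsion-free connection the Lie derivative of a $2$-form $\alpha$ along a field $Z$ is $(\Ll_Z\alpha)(X,Y) = (\nabla_Z\alpha)(X,Y) + \alpha(\nabla_X Z, Y) + \alpha(X, \nabla_Y Z)$; applied to $Z = \euler$, $\alpha = \omega_i$, and using $\nabla\omega_i = 0$, this gives
\[
(\Ll_{\euler}\,\omega_i)(X,Y) = \omega_i(HX, Y) + \omega_i(X, HY).
\]
Writing $\omega_1(\cdot,\cdot) = g_{\sst M}(I\,\cdot, \cdot)$, $\omega_2(\cdot,\cdot) = g_{\sst M}(S\,\cdot, \cdot)$, $\omega_3(\cdot,\cdot) = g_{\sst M}(T\,\cdot, \cdot)$, and using self-adjointness of $H$, the right-hand side for $\omega_1$ equals $g_{\sst M}\big((I H + H I)X, Y\big)$, and similarly for $\omega_2, \omega_3$. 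Comparing with $\Ll_{\euler}\,\omega_i = 2\,\omega_i$ and invoking nondegeneracy of $g_{\sst M}$, I obtain the three operator identities $I H + H I = 2I$, $S H + H S = 2S$ and $T H + H T = 2T$.

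Finally, I would set $A := H - \id_{TM}$, so that the three identities read $A I + I A = 0$, $A S + S A = 0$ and $A T + T A = 0$; that is, $A$ anticommutes with each of $I, S, T$. The one genuinely non-mechanical step is then the observation that anticommuting with $I$ and with $S$ forces $A$ to \emph{commute} with their product: $A(IS) = (AI)S = -I(AS) = I(SA) = (IS)A$. Since $IS = T$, this gives $AT = TA$, which together with $AT = -TA$ yields $2\,TA = 0$; as $T$ is invertible ($T^2 = \id$), we conclude $A = 0$, i.e. $\nabla\,\euler = \id_{TM}$. I expect this last algebraic observation to be the crux, the computation of $\Ll_{\euler}\,\omega_i$ and the symmetry of $H$ being routine once the parallelism of the $\omega_i$ is in hand.
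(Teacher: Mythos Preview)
Your proof is correct, and it takes a genuinely different route from the paper's. The paper's argument invokes an external characterization due to Swann (Proposition~\ref{prop: swann's proposition}): since $\rho_0$ is a pseudo-K\"ahler potential for $I$, one has $\tfrac{1}{2}\big(\nabla^2_{X,Y}\rho_0 + \nabla^2_{IX,IY}\rho_0\big) = g_{\sst M}(X,Y)$, and then the paper uses $\Ll_{\euler}\,I = 0$ (from $\Ll_{\euler}\,\omega_1 = 2\,\omega_1$) together with parallelism of $I$ to show $\nabla_{IX}\,\euler = I\,\nabla_X\,\euler$, which collapses the left-hand side to $2\,g_{\sst M}(\nabla_X\euler, Y)$. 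So the paper works with only the complex structure $I$ and borrows the K\"ahler-potential identity. Your argument, by contrast, is entirely self-contained: you feed all three homothety relations $\Ll_{\euler}\,\omega_i = 2\,\omega_i$ into the parallelism of the $\omega_i$ to obtain anticommutation of $A = H - \id$ with $I$, $S$, $T$, and then close with the split-quaternionic relation $IS = T$ to force $A = 0$. Your approach avoids the reference to Swann's proposition and exhibits more directly how the full split-quaternionic structure pins down $\nabla\euler$; the paper's approach, on the other hand, makes the link to the classical K\"ahler-potential theory explicit and only needs the single homothety for $\omega_1$.
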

To show this, we use the following result by Swann
\begin{prop}[\cite{swann91}]
\label{prop: swann's proposition}
Let $(N, I, g_{\sst N})$ be a K\"ahler manifold and $\nabla$ be the associated Chern connection. A smooth function $\rho_0: N \longrightarrow \R^+$ is a K\"ahler potential if and only if
\eq{
\label{eq: kahler potential identity}
\frac{1}{2}\left(\nabla^2_{X, Y}\, \rho_0 \, + \, \nabla^2_{IX, IY}\, \rho_0 \right)  \, = \, g_{\sst N} (X, Y).
}
\end{prop}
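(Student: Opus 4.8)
The statement is an equivalence between the differential-form condition defining a K\"ahler potential and the pointwise Hessian identity \eqref{eq: kahler potential identity}. The plan is to unwind the potential condition
\[
\tfrac{1}{2}\, d\,(I\, d\rho_0) \, = \, \omega, \qquad \omega(X,Y) \, = \, g_{\sst N}(IX, Y),
\]
(where $I$ acts on $1$-forms by $(I\alpha)(X) = -\alpha(IX)$) into an expression involving the Hessian $\nabla^2\rho_0$, and then to observe that the antisymmetric identity so obtained is carried to the symmetric identity \eqref{eq: kahler potential identity} by the invertible substitution $X \mapsto IX$. Throughout I would use the two defining features of a K\"ahler manifold that its Chern connection shares with the Levi-Civita connection: it is torsion-free, and it satisfies $\nabla I = 0$.

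First I would reduce $d\,(I\,d\rho_0)$ to the Hessian. For any $1$-form $\beta$, torsion-freeness gives $d\beta(X,Y) = (\nabla_X\beta)(Y) - (\nabla_Y\beta)(X)$. Applying this with $\beta = I\,d\rho_0$ and using $\nabla I = 0$ to commute $I$ past the connection, $\nabla_X(I\,d\rho_0) = I(\nabla_X d\rho_0)$, I obtain $(\nabla_X(I\,d\rho_0))(Y) = -(\nabla_X d\rho_0)(IY) = -\nabla^2_{X, IY}\rho_0$. Since the Hessian is symmetric (again by torsion-freeness), this yields
\[
d\,(I\,d\rho_0)(X,Y) \, = \, \nabla^2_{IX, Y}\rho_0 \, - \, \nabla^2_{X, IY}\rho_0,
\]
so the K\"ahler potential condition is equivalent to $\tfrac{1}{2}\bigl(\nabla^2_{IX,Y}\rho_0 - \nabla^2_{X,IY}\rho_0\bigr) = g_{\sst N}(IX,Y)$.

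Finally I would substitute $X \mapsto IX$ in this last identity; using $I^2 = -1$, the symmetry of $\nabla^2\rho_0$, and metric compatibility $g_{\sst N}(IX, IY) = g_{\sst N}(X,Y)$, it transforms exactly into \eqref{eq: kahler potential identity}. As $X \mapsto IX$ is invertible, reversing it recovers the potential condition from \eqref{eq: kahler potential identity}, so both directions of the equivalence follow at once. I do not expect a genuine obstacle here: the content is a short tensor computation. The only point demanding care is the bookkeeping of normalizations --- the convention for $I$ acting on forms, the factor $\tfrac{1}{2}$, and the sign in $\omega(X,Y) = g_{\sst N}(IX,Y)$ --- together with noting precisely where each K\"ahler hypothesis enters: $\nabla I = 0$ to pull $I$ through $\nabla$, and torsion-freeness both for the formula $d\beta(X,Y) = (\nabla_X\beta)(Y) - (\nabla_Y\beta)(X)$ and for the symmetry of the Hessian.
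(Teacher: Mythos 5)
Your proof is correct, but there is nothing in the paper to compare it against: the paper does not prove this proposition at all --- it is imported verbatim from Swann's work \cite{swann91} (the paper says ``we use the following result by Swann'' and moves straight on to apply it in the proof of Proposition \ref{prop: euler vf identity}). So your argument fills in a citation rather than paralleling an in-paper proof. The computation itself is sound and is essentially the standard one: torsion-freeness gives $d\beta(X,Y) = (\nabla_X\beta)(Y) - (\nabla_Y\beta)(X)$ and the symmetry of the Hessian, $\nabla I = 0$ lets you pull $I$ through $\nabla$, and together these yield $d(I\,d\rho_0)(X,Y) = \nabla^2_{IX,Y}\rho_0 - \nabla^2_{X,IY}\rho_0$; the invertible substitution $X \mapsto IX$ then carries the resulting antisymmetric identity to the symmetric identity \eqref{eq: kahler potential identity} and back, giving both directions at once. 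Your conventions ($(I\alpha)(X) = -\alpha(IX)$, $\omega(X,Y) = g_{\sst N}(IX,Y)$, the factor $\tfrac{1}{2}$) match the paper's definition of a potential, which is the only place where bookkeeping could have gone wrong. Two small remarks: metric compatibility $g_{\sst N}(IX,IY) = g_{\sst N}(X,Y)$ is not actually used in the final substitution --- only $I^2 = -1$ and the Hessian symmetry enter --- and, more usefully, your argument never invokes positive-definiteness of $g_{\sst N}$, so it establishes the pseudo-K\"ahler version as well, which is precisely the generality the paper asserts without proof in the sentence following the proposition and then relies on for the hypersymplectic application.
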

The above statement of the theorem holds even if the metric $g_{\sst N}$ is a  pseudo-K\"ahler metric.

\begin{proof}[\bf Proof of Lemma \ref{prop: euler vf identity}]
If we regard $M$ as a pseudo-K\"ahler manifold with respect to the complex structure $I$, then we have shown that $\rho_0$ is a K\"ahler potential for $I$. Then, from \eqref{eq: kahler potential identity} it follows that
\eqst{
\nabla(d\rho_0) (X, Y) \, + \, \nabla(d\rho_0) (IX, IY) \, = \, g_{\sst M}(X, Y)
}
Now consider
\alst{
g_{\sst M}(\nabla_X\, \euler, \, Y) \, 
& = \, \nabla_X\,(g_{\sst M}(\euler, \, Y)) \, - \, g_{\sst M}(\euler, \, \nabla_X Y) \\
& = \nabla_X\, (d\rho_0(Y)) \, - \, d\rho_0(\nabla_X Y) \\
& = \nabla_X (d\rho_0) (Y).
}
Therefore, 
\alst{
\nabla_X (d\rho_0) (Y) \, + \, \nabla_{IX} (d\rho_0) (IY) \, 
& = \, g_{\sst M}(\nabla_X\, \euler, \, Y) \, + \, g_{\sst M}(\nabla_{IX}\, \euler, \, IY)\\
& = g_{\sst M}(\nabla_X\, \euler, \, Y) \, - \, g_{\sst M}(I\nabla_{IX}\, \euler, \, Y)\\
& = g_{\sst M}(\nabla_X\, \euler - I\nabla_{IX}\, \euler, \, Y).
}
In Proposition \ref{prop: identities with euler vf}, we have shown that $\Ll_{\euler}\, \omega_1 = 2\, \omega_1$ and therefore $\Ll_{\euler} \, I = 0$. Using this, we have
\eqst{
\nabla_{IX} \, \euler = \nabla_{\euler} \, IX \, - \, \Ll_{\euler}\, IX \, = \, I\nabla_{\euler} \, X - I \Ll_{\euler}\, X \, = \, I\nabla_X \, \euler.
}
Plugging this in the previous equation, we get
\eqst{
\nabla_X (d\rho_0) \,(Y) \, + \, \nabla_{IX} (d\rho_0)\, (IY) \, = \, 2\, g_{\sst M}(\nabla_X\, \euler, \, Y).
}
From Proposition \ref{prop: swann's proposition}, it follows that $g_{\sst M}(\nabla_X\, \euler, \, Y) = g_{\sst M}(X, \, Y)$, for all $X, Y \in TM$. Therefore, $\nabla\, \euler = \id_{TM}$.

\end{proof}

\begin{cor}
\label{cor: charaterization of potential}
The hypersymplectic potential $\rho_0$ satisfies
\eqst{
\nabla^2 \rho_0 \, = \, g_{\sst M}.
}
\end{cor}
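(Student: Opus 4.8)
The plan is to read this off as an immediate metric-dual reformulation of Proposition \ref{prop: euler vf identity}, so that no new geometric input is required. First I would recall the definition of the Hessian of a function as the second covariant derivative: for vector fields $X, Y$ on $M$,
\eqst{
\nabla^2\rho_0(X, Y) \, = \, (\nabla_X\, d\rho_0)(Y) \, = \, X(d\rho_0(Y)) \, - \, d\rho_0(\nabla_X Y).
}
The goal is then to show that this symmetric $2$-tensor coincides with $g_{\sst M}$.

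The key step is to convert the one-form $d\rho_0$ into the vector field $\euler$ using the metric. Since $\euler$ is by definition the gradient vector field of $\rho_0$, we have $d\rho_0(Y) = g_{\sst M}(\euler, Y)$ for all $Y$. I would then reuse verbatim the computation already carried out in the proof of Proposition \ref{prop: euler vf identity}, which uses only metric-compatibility of the Levi-Civita connection:
\eqst{
g_{\sst M}(\nabla_X\, \euler, \, Y) \, = \, \nabla_X(g_{\sst M}(\euler, Y)) - g_{\sst M}(\euler, \nabla_X Y) \, = \, X(d\rho_0(Y)) - d\rho_0(\nabla_X Y) \, = \, (\nabla_X\, d\rho_0)(Y).
}
This identifies the Hessian tensor with the bilinear form $(X,Y) \mapsto g_{\sst M}(\nabla_X\euler, Y)$.

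Finally I would invoke Proposition \ref{prop: euler vf identity} itself, which gives $\nabla\euler = \id_{TM}$, i.e. $\nabla_X\euler = X$ for every $X \in TM$. Substituting into the previous display yields $\nabla^2\rho_0(X, Y) = g_{\sst M}(X, Y)$ for all $X, Y$, which is precisely the asserted identity $\nabla^2\rho_0 = g_{\sst M}$. Since the substantive work—Swann's characterization of (pseudo-)K\"ahler potentials together with the verification that $\rho_0$ is such a potential and that $\nabla\euler = \id_{TM}$—has already been done upstream, there is no genuine obstacle at this stage. The only point requiring care is the purely formal identification of the Hessian $\nabla^2\rho_0$ with $g_{\sst M}(\nabla\euler,\cdot)$, which hinges on metric-compatibility and the definition of the gradient, and which the preceding proof has in fact already established.
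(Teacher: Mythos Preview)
Your proposal is correct and is exactly the intended argument: the paper states this as a corollary with no separate proof, and the identity $\nabla^2\rho_0(X,Y) = g_{\sst M}(\nabla_X\euler, Y)$ you use is precisely the computation already recorded in the proof of Proposition \ref{prop: euler vf identity}, so substituting $\nabla\euler = \id_{TM}$ is all that remains.
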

There are two consequences of the above corollary. First, the metric on $M$ cannot be complete (see \cite{yano70}). Second, the metrics from difference level-sets of $\rho_0$ are homothetic. 

In the section that follows, we will show that the level-sets of the hypersymplectic potential carry a \emph{split 3-Sasakian structure}.


\section{Split 3-Sasakian geometry}

We begin this section by introducing $\varepsilon$-Sasakian manifolds. These are the pseudo-Riemannian analogues of Sasakian manifolds with either a complex or a product structure on the leaves of the 1-dimensional foliation. If the leaves of the foliation are endowed with a complex structure ($\varepsilon = -1$), we call it a \emph{pseudo-Sasakian structure}. If they are endowed with a product structure ($\varepsilon = 1$), then we call the manifold as \emph{para Sasakian manifold}. This is slightly different than the conventional terminologies in the literature. However, it allows for a simultaneous treatment of both the cases. Let $\Ss$ be a smooth manifold, equipped with a $(1,1)$-tensor $\Phi$, a nowhere vanishing vector field $\xi$ and a 1-form $\eta$ metric dual to $\xi$, satisfying the following relations
\al{
\label{eq: cond. para contact metric str.}
\Phi\circ \Phi \, = \, \varepsilon\,\id \, - \, \varepsilon \, \eta \otimes \xi, \,\,\, \eta(\xi) \, = \, 1, \,\,\, \eta(\Phi(\cdot)) \, = \, 0 \, = \Phi(\xi).
}
If $\varepsilon= -1$, then $(\Ss, \Phi, \eta, \xi)$ is called an \emph{almost-contact manifold} and if $\varepsilon = 1$, it is called an \emph{almost-para contact manifold}. To give a simultaneous treatment, we will refer to $(\Ss, \Phi, \eta, \xi)$ as an $\varepsilon$-almost contact structure. Consider the Nijenuis tensor of $\Phi$, which is a $(2,1)$-tensor, defined as
\eqst{
N_{\Phi}(X, Y) \, = \, [\Phi(X), \, \Phi(Y)] \, + \, \Phi^2[X, \, Y] \, - \, \Phi([X, \, \Phi(Y)]) \, - \, \Phi([\Phi(X), \, Y]).
}
The $(\Ss, \Phi, \eta, \xi)$ is said to be \emph{normal}, if $N_{\Phi} = d\eta \otimes \xi$.
Suppose now that $\Ss$ is endowed with a pseudo-Riemannian metric $g_{\sst \Ss}$, such that
\eqst{
g_{\sst \Ss}(\Phi(X), \,\Phi(Y)) \, = \, \varepsilon \left(\, - g_{\sst \Ss}(X, Y)\, + \, \tau\,\eta(X)\,\eta(Y) \, \right), \,\,\,\, \text{and} \,\,\,\,  d\,\eta \,(X,Y) \, = \, \tau\,g_{\sst \Ss}(\Phi(X), Y),
}
where $\tau = g_{\sst \Ss}(\xi, \xi)$, then, the $\varepsilon$-almost contact structure is said to be an $\varepsilon$-\emph{para contact metric structure} and the metric $g_{\sst \Ss}$ is said to be \emph{compatible} with the $\varepsilon$-para contact structure. Additionally, if the structure is normal, the manifold $(\Ss, g_{\sst \Ss}, \Phi, \eta, \xi)$ is said to be a $\varepsilon$-\emph{Sasakian manifold}. In other words, an $\varepsilon$-Sasakian manifold is an $\varepsilon$-almost contact manifold, endowed with a compatible pseudo-Riemannian metric and the structure is normal.

\begin{prop}\cite{gml01}
\label{prop: para sasakian str.}
An $\varepsilon$-contact manifold $(\Ss, g_{\sst \Ss}, \Phi, \eta, \xi)$ is $\varepsilon$-Sasakian if and only if the following is satisfied
\eq{
\label{eq: para-sasakian str.}
\nabla_X \Phi \,(Y)\, = \, g_{\sst \Ss}(\Phi (X), \, \Phi(Y)) \, \xi \, + \, \tau\,\eta(Y)\cdot\Phi\circ \Phi(X).
}
where $\nabla$ is the Levi-Civita connection of $g_{\sst \Ss}$.
\end{prop}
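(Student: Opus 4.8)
The plan is to convert the normality condition $N_{\Phi}=d\eta\otimes\xi$ into a statement about the Levi-Civita derivative of $\Phi$ and then compare it with the asserted identity \eqref{eq: para-sasakian str.}. Since $(\Ss, g_{\sst \Ss},\Phi,\eta,\xi)$ is already assumed to be an $\varepsilon$-para contact metric structure, only the equivalence ``normal $\Longleftrightarrow$ \eqref{eq: para-sasakian str.}'' has to be established. The first step is to rewrite the Nijenhuis tensor using that $\nabla$ is torsion-free: substituting $[X,Y]=\nabla_X Y-\nabla_Y X$ and $\nabla_X(\Phi Y)=(\nabla_X\Phi)Y+\Phi\nabla_X Y$ into the definition of $N_\Phi$, all the bare $\nabla$-terms cancel and one is left with
\[
N_{\Phi}(X,Y)=(\nabla_{\Phi X}\Phi)Y-(\nabla_{\Phi Y}\Phi)X-\Phi\bigl((\nabla_X\Phi)Y-(\nabla_Y\Phi)X\bigr).
\]
Alongside this I would record the purely algebraic consequences of \eqref{eq: cond. para contact metric str.} and of the compatibility relations: $\Phi\xi=0$, $\eta\circ\Phi=0$, $g_{\sst \Ss}(\xi,\cdot)=\tau\,\eta$, the skew-symmetry $g_{\sst \Ss}(\Phi X,Y)=-g_{\sst \Ss}(X,\Phi Y)$, the symmetry of $g_{\sst \Ss}(\Phi\cdot,\Phi\cdot)$, and $\Phi^{3}=\varepsilon\,\Phi$. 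These are the only identities the computation will use.

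For the ``if'' direction I would simply feed \eqref{eq: para-sasakian str.} into the displayed expression for $N_\Phi$. Writing $A(X,Y)=g_{\sst \Ss}(\Phi X,\Phi Y)\xi+\tau\,\eta(Y)\Phi^{2}X$, the four terms evaluate using $\Phi^{3}=\varepsilon\Phi$ and $g_{\sst \Ss}(\Phi^{2}X,\Phi Y)=-\varepsilon\,g_{\sst \Ss}(\Phi X,Y)$; the contributions proportional to $\Phi X$ and $\Phi Y$ cancel in pairs, leaving $N_{\Phi}(X,Y)$ equal to a universal constant times $g_{\sst \Ss}(\Phi X,Y)\,\xi$. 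The contact condition $d\eta(X,Y)=\tau\,g_{\sst \Ss}(\Phi X,Y)$ then identifies this with $d\eta\otimes\xi$, with the normalisation of $d\eta$ and of $N_\Phi$ fixed in \cite{gml01}, so the structure is normal. This direction is routine.

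The ``only if'' direction is where the real work lies, and I expect it to be the main obstacle. The reformulated identity shows that normality controls only the \emph{antisymmetric} Nijenhuis combination of $\nabla\Phi$; it does not, by itself, determine the symmetric part of $(\nabla_X\Phi)Y$, whereas \eqref{eq: para-sasakian str.} prescribes the whole tensor. To close the gap I would first extract from the contact and compatibility conditions the behaviour of $\xi$: differentiating $g_{\sst \Ss}(\xi,\cdot)=\tau\eta$ gives $g_{\sst \Ss}(\nabla_X\xi,Y)=\tau(\nabla_X\eta)(Y)$, and one shows that normality forces the structure tensor $h:=\tfrac12\,\mathcal{L}_{\xi}\Phi$ to vanish, so that $\xi$ is Killing and $\nabla_X\xi$ is a fixed multiple of $\Phi X$. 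With $\nabla\xi$ in hand, I would split the target identity into its component along $\xi$ and its component along the distribution $\mathcal{D}=\ker\eta$: the $\xi$-component is fixed by $\nabla\xi$ together with $\eta\circ\Phi=0$ and $\Phi\xi=0$, while on $\mathcal{D}$ one pins down the symmetric part of $\nabla\Phi$ by differentiating the compatibility relation $g_{\sst \Ss}(\Phi X,\Phi Y)=\varepsilon(-g_{\sst \Ss}(X,Y)+\tau\eta(X)\eta(Y))$ and feeding in the now-known antisymmetric part supplied by normality. Combining the two components reproduces \eqref{eq: para-sasakian str.}.

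As a consistency check I would specialise to $\varepsilon=-1,\ \tau=1$, where the right-hand side of \eqref{eq: para-sasakian str.} collapses to $g_{\sst \Ss}(X,Y)\xi-\eta(Y)X$, recovering the classical characterisation of Sasakian manifolds; this both validates the constants and highlights that the delicate point throughout is the coupling between the metric compatibility, which governs the symmetric part of $\nabla\Phi$, and the normality condition, which governs only its antisymmetric part.
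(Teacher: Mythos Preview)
The paper does not give a proof of this proposition: it is quoted verbatim from \cite{gml01} and left unproved, so there is no ``paper's own proof'' against which to compare your proposal. What you have sketched is essentially the classical route used in almost contact metric geometry (rewrite $N_\Phi$ via the torsion-free connection, then exploit the metric compatibility and the contact condition $d\eta=\tau\,g_{\sst\Ss}(\Phi\cdot,\cdot)$), adapted to the $\varepsilon$-setting; this is indeed how the result is obtained in \cite{gml01}.

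Two small remarks on your outline. First, in the ``only if'' direction you say that normality forces $h=\tfrac12\,\mathcal L_\xi\Phi$ to vanish; this is correct, but it is worth noting that in the $\varepsilon$-contact metric case the vanishing of $h$ does not follow from $N_\Phi=d\eta\otimes\xi$ alone---one also needs the auxiliary tensors $N^{(2)}(X,Y)=(\mathcal L_{\Phi X}\eta)Y-(\mathcal L_{\Phi Y}\eta)X$ and $N^{(3)}=\mathcal L_\xi\Phi$ to vanish, and these vanishings are consequences of normality together with the contact metric identities (this is where the work in \cite{gml01} actually lies). Second, rather than ``differentiating the compatibility relation'' to recover the symmetric part of $\nabla\Phi$, the cleaner device---and the one used in the reference---is the universal formula expressing $2g_{\sst\Ss}((\nabla_X\Phi)Y,Z)$ in terms of $d\Phi_{g}$, $N_\Phi$, and $d\eta$ (the $\varepsilon$-analogue of Blair's formula); in the $\varepsilon$-contact metric case the $d\Phi_{g}$-terms drop out since $\Phi_{g}$ is a multiple of $d\eta$, and normality kills the Nijenhuis contribution, leaving exactly \eqref{eq: para-sasakian str.}. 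Your plan would get there, but via a slightly longer road.
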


\subsection{Split 3-Sasakian manifolds}

Suppose that $\Ss$ is a pseudo-Riemannian manifold of dimension $4n+3$, carrying a metric $g_{\sst \Ss}$ of signature $(2n+1, 2n-2)$. Additionally, suppose that $\Ss$ also carries a triple of orthogonal Killing vector fields $(X_1, X_2, X_3)$, of lengths $1, -1, -1$ respectively, satisfying
\eq{
\label{eq: 3-sasakian relations}
\frac{1}{2}\,[X_1, \, X_2] \, = \, X_3, \,\,\,\, \frac{1}{2}\, [X_2, \, X_3] \, = \, - X_1, \,\,\,\, 
\frac{1}{2}\,[X_3, \, X_1] \, = \, X_2.
}
Define the 1-forms $\eta_{i}(Y) = \tau_i\, g_{\sst \Ss}(X_i, Y)$, where $\tau_i = g_{\sst \Ss}(X_i, X_i)$. Then, $\eta_{i}(X_j) = \delta_{ij}$. Suppose that $\Ss$ is also endowed with endomorphisms $(\Phi_1, \Phi_2, \Phi_3)$, satisfying
\eq{
\label{eq:metric compatability for split sasakian}
g_{\sst \Ss}(\Phi_i(X), \,\Phi_i(Y)) \, = \, \varepsilon_i \, \left(\, - g_{\sst \Ss}(X, Y)\, + \, \tau_i \, \eta_i(X) \,\eta_i(Y)\, \right),
}
where, $(\varepsilon_1,\varepsilon_2, \varepsilon_3) = (1, -1, -1)$. Then, the structure $(\Ss, g_{\sst \Ss}, \{\Phi_i, \eta_i, \xi_i\}_{i=1}^3)$ is said to be an \emph{almost split 3-contact structure}. Additionally, if 
\eqst{
d\,\eta_i \, = \, \tau_i \, g_{\sst \Ss}(\Phi_i(\cdot), \cdot),
}
then the structure is said to be a \emph{metric split 3-contact structure}. 

\begin{defn}
A metric split 3-contact manifold $(\Ss, g_{\sst \Ss}, \{\Phi_i, \eta_i, X_i\}_{i=1}^3)$  is a split 3-Sasakian manifold if the structures are normal; i.e.,
\eqst{
N_{\Phi_i} \, = \, d\, \eta_i \otimes X_i, \,\,\,\, i = 1,2,3.
}
\noindent Equivalently, from Proposition \ref{prop: para sasakian str.}, we see that $(\Ss, g_{\sst \Ss}, \{\Phi_i, \eta_i, X_i\}_{i=1}^3)$  is a split 3-Sasakian manifold if
\eq{
\label{eq: para sasakian condition}
\nabla_X \, \Phi_i \, (Y) \, = \, \, g_{\sst \Ss}(\Phi_i (X), \, \Phi_i (Y)) \, X_i \, + \, \tau_i \,\eta_i (Y)\cdot\Phi_i \circ \Phi_i (X) , \,\,\,\,\, X, Y \in TS
}
\end{defn}
\noindent Note that $(\Ss, g_{\sst \Ss}, \Phi_1, \eta_1, \xi_1)$ is a pseudo-Sasakian manifold, whereas, for $i = 2,3$, $(\Ss, g_{\sst \Ss}, \Phi_i, \eta_i, X_i)$  is a para-Sasakian manifold.

\begin{thm}[\cite{cp09}, Thm. 4.1, 5.7 and Prop. 4.2]
\label{thm: split 3-sasakian}
Let $(\Ss, g_{\sst \Ss}, \{\Phi_i, \eta_i, \xi_i\}_{i=1}^3)$ be a split 3-contact manifold of dimension $4n + 3$. Then, $S$ is necessarily a split 3-Sasakian manifold. The metric $g_{\sst \Ss}$ is Einstein and has a constant scalar curvature equal to $(4n+2)(4n+3)$.
\end{thm}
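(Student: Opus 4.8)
The statement is the split-signature analogue of Kashiwada's theorem---that a metric $3$-contact manifold is automatically $3$-Sasakian---together with the Einstein property that this forces. The plan is to proceed in three stages: first extract the pointwise split-quaternionic algebra satisfied by the triple $(\Phi_i, \eta_i, X_i)$; then use the hypothesis that the $X_i$ are Killing to promote each $\varepsilon_i$-contact metric structure to an $\varepsilon_i$-Sasakian one, verifying the characterising identity \eqref{eq: para-sasakian str.} of Proposition \ref{prop: para sasakian str.}; and finally read off the Ricci tensor from the Sasakian identity \eqref{eq: para sasakian condition}.

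First I would pin down the algebra. From $\eta_i(X_j) = \delta_{ij}$, the compatibility \eqref{eq:metric compatability for split sasakian}, the almost-contact identities $\Phi_i X_i = 0$, $\eta_i\circ\Phi_i = 0$, and the bracket relations \eqref{eq: 3-sasakian relations}, one derives the mixed relations $\Phi_i X_j = \pm X_k$, $\eta_i\circ\Phi_j = \mp\eta_k$ and the split-quaternionic products $\Phi_i\Phi_j = \pm\Phi_k - \eta_j\otimes X_i$ for a cyclic $(i,j,k)$, with every sign dictated by the multiplication table \eqref{eq: split quaternionic relations} and the values $(\tau_1,\tau_2,\tau_3) = (1,-1,-1)$, $\varepsilon_i = \tau_i$. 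Next, since each $X_i$ is Killing, $\nabla X_i$ is skew-adjoint; pairing this skewness against the contact condition $d\eta_i = \tau_i\,g_{\sst\Ss}(\Phi_i(\cdot),\cdot)$ gives the first-order $K$-contact identity
\[
\nabla_Y X_i \, = \, \tfrac{1}{2}\,\Phi_i Y,
\]
the normalising constant being the one compatible with \eqref{eq: 3-sasakian relations}. In particular each $\Phi_i = 2\,\nabla X_i$ is determined by the metric and the Reeb fields.

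The core step is normality, and this is where I expect the real difficulty to lie. For a Killing field $X_i$ one has the second-order identity $(\nabla_X(\nabla X_i))Y = R(X_i, X)Y$, so differentiating the $K$-contact relation expresses $(\nabla_X\Phi_i)Y = 2\,R(X_i,X)Y$ purely in terms of curvature. Following Kashiwada in the split setting, I would then feed the cyclic algebra of the first step into these three curvature expressions: the product relation $\Phi_1 = \Phi_2\Phi_3 - \eta_3\otimes X_2$ (and its cyclic companions), together with $\nabla X_i = \tfrac12\Phi_i$, couples $\nabla\Phi_1$, $\nabla\Phi_2$, $\nabla\Phi_3$ to one another. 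Writing $A_i$ for the deviation of $(\nabla_X\Phi_i)Y$ from the right-hand side of the Sasakian identity \eqref{eq: para sasakian condition}, the three structures produce a cyclic linear system in $A_1, A_2, A_3$ whose only solution is $A_1 = A_2 = A_3 = 0$; by Proposition \ref{prop: para sasakian str.} this is exactly the assertion that each $(\Phi_i,\eta_i,X_i)$ is $\varepsilon_i$-Sasakian, so $\Ss$ is split $3$-Sasakian. The obstacle is precisely the sign bookkeeping: the indefinite signature and the mixed para/pseudo character of the three $\Phi_i$ make the signs in the product relations and in the system for the $A_i$ delicate, and one must confirm that this system is genuinely non-degenerate rather than merely consistent.

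For the curvature statement, I would differentiate $\nabla_Y X_i = \tfrac12\Phi_i Y$ once more and substitute the now-established Sasakian identity \eqref{eq: para sasakian condition}. This yields the characteristic relation $R(X,Y)X_i = \tau_i\big(\eta_i(Y)X - \eta_i(X)Y\big)$ along each Reeb direction, hence, contracting over a pseudo-orthonormal frame, $\mathrm{Ric}(X, X_i) = (4n+2)\,\eta_i(X)$; this determines $\mathrm{Ric}$ on the three-dimensional vertical distribution spanned by $X_1, X_2, X_3$. For a horizontal $X$ one combines the three $\varepsilon_i$-Sasakian identities using the products $\Phi_i\Phi_j = \pm\Phi_k - \eta_j\otimes X_i$ to obtain $\mathrm{Ric}(X,X) = (4n+2)\,g_{\sst\Ss}(X,X)$ as well. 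Thus $\mathrm{Ric} = (4n+2)\,g_{\sst\Ss}$, so $g_{\sst\Ss}$ is Einstein, and taking the trace over the $(4n+3)$-dimensional manifold gives the scalar curvature $(4n+2)(4n+3)$, as claimed.
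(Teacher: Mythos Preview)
The paper does not supply its own proof of this theorem: the statement is quoted verbatim from Caldarella--Pastore \cite{cp09} (their Thm.~4.1, Thm.~5.7 and Prop.~4.2), and the paper uses it as a black box in the proof of Theorem~\ref{thm: level set of potential is split 3-sasakian}. Your sketch is therefore not to be compared against anything in the present text but against the cited source, and in that respect it is on target: Caldarella--Pastore's argument is precisely the split-signature adaptation of Kashiwada's theorem that you outline---derive the mixed $\Phi_i\Phi_j$ algebra, use the Killing hypothesis to get the $K$-contact relation $\nabla X_i \propto \Phi_i$, couple the three covariant derivatives $\nabla\Phi_i$ through the product relations to force each deviation $A_i$ from \eqref{eq: para sasakian condition} to vanish, and then read off $\mathrm{Ric} = (4n+2)\,g_{\sst\Ss}$ from the resulting curvature identities along and transverse to the Reeb distribution.

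One point to watch: the normalisation $\nabla_Y X_i = \tfrac12\Phi_i Y$ depends on the convention for the exterior derivative of a $1$-form. With the convention most common in the Sasakian literature (no $\tfrac12$ in $d\eta$), the relation is $\nabla_Y X_i = -\Phi_i Y$ (up to the sign governed by $\tau_i$), and the bracket relations \eqref{eq: 3-sasakian relations} are then consistent with $[X_i,X_j] = \nabla_{X_i}X_j - \nabla_{X_j}X_i$ without an extra factor. This does not affect the logic of your argument, but it will change the constants you carry through the cyclic system for the $A_i$, and since you yourself flag the sign bookkeeping as the delicate part, it is worth fixing the convention explicitly at the outset before running the Kashiwada machinery.
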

\begin{rmk}
The split 3-Sasakian structure we discuss below is of the type $(1, -1, -1)$; i.e, the norms of the Reeb vector fields generating the split 3-Sasakian structures are $(1, -1, -1)$. The authors in \cite{cp09} refers to this as the ``\emph{negative mixed 3-Sasakian structure}". 
\end{rmk}

The simplest example of a split 3-Sasakian manifold is the positive pseudo-sphere in the split quaternionic module $\B^{n+1}$
\eqst{
S_+ \, = \, \{~ q \in \B^{n+1} ~ | ~ \norm{q}^2 \, = \, 1 ~\}.
}
The manifold $S_+$ carries a pseudo-Riemannian metric of signature $(2n-1, \, 2n + 1)$. Moreover, there is an isometric and transitive action of ${\rm Sp}(1, \B)$ on $S_+$. The Killing vector-fields corresponding to the basis of the 3-dimensional Lie algebra $\mf{sp}(1,\B)$ and the restricted metric on $S_+$ determine a split 3-Sasakian structure on $S_+$.

\subsection{Level-set of the hypersymplectic potential}
Suppose now that $M$ is a hypersymplectic manifold with a free action of ${\rm Sp}(1,\B)$, such that the obstruction $\Xx_2$ vanishes. Then we have a canonically defined hypersymplectic potential on $M$. Consider the level-set $\Ss:=\rho_0^{-1}\left(\frac{1}{2} \right)$. Then, $\Ss$ is ${\rm Sp}(1,\B)$-invariant and the Killing vector fields
\eqst{
K^M_{\xi_1} \, = \, I\, \euler, \, \, \, K^M_{\xi_2} \, = \, -S\, \euler, \, \, \, K^M_{\xi_3} \, = \, -T\, \euler
}
can be thought of as vector fields on $\Ss$. We denote their restriction to $\Ss$ by $K^{\Ss}_{\xi_i}$. Let $g_{\sst \Ss} := \iota^{\ast} g_{\sst M}$ denote the restriction of the hypersymplectic metric to the hypersurface $\Ss$. Define 
\eqst{
\widehat{\eta}_i(Y) \, = \,\tau_i \, g_{\sst \Ss}\,(K^{\Ss}_{\xi_i}, \, Y), \,\,\, \forall Y \in T\Ss, \,\,\, \text{where} \,\,\, \tau_i \, = \, g_{\sst \Ss}\,(K^{\Ss}_{\xi_i}, K^{\Ss}_{\xi_i}).
}
Note that 
\eq{
\label{eq: eta identities}
\widehat{\eta}_1 = \tau_1 \,\iota^{\ast}\pair{\gamma_1, \, \imag }, \,\,\,\,\, \widehat{\eta}_2 = \tau_2 \,\iota^{\ast}\pair{\gamma_1, \, \mathbf{s} }, \,\,\,\,\, \widehat{\eta}_3 = \tau_3 \,\iota^{\ast}\pair{\gamma_1, \, \mathbf{t}}\,\,\,\, \text{and therefore,} \,\,\,\, d\, \widehat{\eta}_i = \tau_i\, \iota^{\ast}\omega_i.
} 
Define the 1-forms
\eqst{
\widehat{\Phi}_1 (Y) = IY + \widehat{\eta}_1(Y) \euler, \,\,\,\,\, \widehat{\Phi}_2 (Y) = SY + \widehat{\eta}_2(Y)  \euler, \,\,\,\,\,\widehat{\Phi}_3 (Y) = TY + \widehat{\eta}_3(Y)  \euler.
}

\begin{thm}
\label{thm: level set of potential is split 3-sasakian}
The manifold $\left(\Ss, \widehat{g_{\sst \Ss}}, \{\,K^{\Ss}_{\xi_i}, \, \widehat{\eta}_i, \, \widehat{\Phi}_i\, \}_{i=1}^3 \right)$ is a split 3-Sasakian manifold. 
\end{thm}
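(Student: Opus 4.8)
The plan is to avoid verifying the (difficult) normality condition directly and instead show that $\Ss$, equipped with the given data, is a \emph{metric split $3$-contact manifold}; Theorem \ref{thm: split 3-sasakian} of Caldarella--Pastore then promotes any such structure to a split $3$-Sasakian one automatically, so the genuinely hard Sasakian (Nijenhuis/covariant-derivative) condition never has to be touched. This reduces the problem to a chain of algebraic verifications on the hypersurface $\Ss = \rho_0^{-1}(1/2)$, where the Euler-field identities of Lemma \ref{lem: euler vf}, Proposition \ref{prop: identities with euler vf} and Proposition \ref{prop: euler vf identity} carry most of the weight.

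First I would establish that $\Ss$ is a smooth $(4n+3)$-dimensional hypersurface with the right Killing data. Since $d\rho_0 = g_{\sst M}(\euler, \cdot)$ and $g_{\sst M}(\euler, \euler)$ is a positive multiple of $\rho_0$ by Lemma \ref{lem: euler vf}, the field $\euler$ is a nowhere-vanishing normal of positive square-norm, so $T\Ss = \euler^{\perp}$ and $g_{\sst \Ss} = \iota^{\ast} g_{\sst M}$ is nondegenerate of the signature required for this dimension (obtained by deleting one positive direction from $g_{\sst M}$). The fields $K^M_{\xi_i}$ generate the isometric, $\rho_0$-preserving ${\rm Sp}(1,\B)$-action, hence are tangent to $\Ss$ — indeed $g_{\sst M}(K^M_{\xi_1}, \euler) = \omega_1(\euler, \euler) = 0$, and likewise for $i=2,3$ — and restrict to Killing fields $K^{\Ss}_{\xi_i}$. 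The level $\rho_0^{-1}(1/2)$ is normalised precisely so that Note \ref{note: note on norms of fundamental vector fields} gives lengths $(\tau_1,\tau_2,\tau_3) = (1,-1,-1)$; the relations $I\euler = K^M_{\xi_1}$, $S\euler = -K^M_{\xi_2}$, $T\euler = -K^M_{\xi_3}$ from Proposition \ref{prop: identities with euler vf} together with the compatibility of $g_{\sst M}$ with $I,S,T$ give mutual orthogonality (so $\widehat{\eta}_i(K^{\Ss}_{\xi_j}) = \delta_{ij}$), and the brackets \eqref{eq: 3-sasakian relations} are exactly the Lie brackets of $\mf{sp}(1,\B) \cong \mf{su}(1,1)$ transported to fundamental vector fields.

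The central algebraic step is to confirm that $\widehat{\Phi}_i$ and $\widehat{\eta}_i$ assemble into an almost split $3$-contact structure. The correction term $\widehat{\eta}_i(\cdot)\,\euler$ is designed so that $\widehat{\Phi}_i$ preserves $T\Ss$: for $Y \in \euler^{\perp}$ one computes $g_{\sst M}(\widehat{\Phi}_1 Y, \euler) = \omega_1(Y,\euler) + \widehat{\eta}_1(Y) = 0$, and analogously for $i=2,3$. With this in hand, a direct computation using $I^2 = -1$, $S^2 = T^2 = 1$ and $\widehat{\eta}_i(K^{\Ss}_{\xi_i}) = 1$ yields $\widehat{\Phi}_i(K^{\Ss}_{\xi_i}) = 0$, $\widehat{\eta}_i \circ \widehat{\Phi}_i = 0$, and the quadratic relations $\widehat{\Phi}_1^2 = -\id + \widehat{\eta}_1 \otimes K^{\Ss}_{\xi_1}$ (pseudo-Sasakian type) and $\widehat{\Phi}_i^2 = \id - \widehat{\eta}_i \otimes K^{\Ss}_{\xi_i}$ for $i=2,3$ (para-Sasakian type), matching the splitting of signs advertised after the definition. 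The metric compatibility \eqref{eq:metric compatability for split sasakian} then follows from that of $g_{\sst M}$ with $I,S,T$ once the $\euler$-terms are expanded and absorbed using $\widehat{\eta}_i$.

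Finally I would verify the metric split $3$-contact condition $d\widehat{\eta}_i = \tau_i\, g_{\sst \Ss}(\widehat{\Phi}_i(\cdot), \cdot)$, which is the bridge that licenses the appeal to Theorem \ref{thm: split 3-sasakian}. Here the earlier identities pay off cleanly: for $Y \in \euler^{\perp}$ the correction term drops out, giving $g_{\sst \Ss}(\widehat{\Phi}_i X, Y) = \omega_i(X,Y) = \iota^{\ast}\omega_i(X,Y)$, while \eqref{eq: eta identities} already records $d\widehat{\eta}_i = \tau_i\, \iota^{\ast}\omega_i$; combining the two yields the desired equality. At that point $(\Ss, g_{\sst \Ss}, \{K^{\Ss}_{\xi_i}, \widehat{\eta}_i, \widehat{\Phi}_i\})$ is a metric split $3$-contact manifold, and Theorem \ref{thm: split 3-sasakian} upgrades it to a split $3$-Sasakian manifold. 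I expect the main obstacle to be bookkeeping rather than conceptual: keeping the signs $(\tau_i) = (1,-1,-1)$ and the mixed complex/product types of the $\widehat{\Phi}_i$ consistent through the quadratic and compatibility relations, and handling the correction terms uniformly, so that the normality condition can legitimately be outsourced to the Caldarella--Pastore theorem.
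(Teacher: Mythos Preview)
Your proposal is correct and follows essentially the same route as the paper's own proof: reduce to showing that the data define a metric split $3$-contact structure on $\Ss$, then invoke Theorem~\ref{thm: split 3-sasakian} (Caldarella--Pastore) to obtain the split $3$-Sasakian conclusion without verifying normality directly. The paper's argument is organised around the explicit computation for $i=2$ (with $i=1,3$ declared analogous), whereas you treat all three indices uniformly and add a few preparatory checks (smoothness of $\Ss$, tangency of $\widehat{\Phi}_i$, mutual orthogonality of the Reeb fields) that the paper leaves implicit, but the substance is the same.
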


\begin{proof}
Owing to Theorem \ref{thm: split 3-sasakian}, we need only show that $\left(\Ss, \widehat{g_{\sst \Ss}}, \{\,K^{\Ss}_{\xi_i}, \, \widehat{\eta}_i, \,\widehat{\Phi}_i\, \}_{i=1}^3 \right)$ is a metric split 3-contact manifold. For that, it is enough to show that $(g_{\sst \Ss}, \,K^{\Ss}_{\xi_2}, \, \widehat{\eta}_2, \, \widehat{\Phi}_2)$ is a para contact metric structure. From their definitions, it is clear that 
\eqst{
\widehat{\eta}_2\left(K^{\Ss}_{\xi_2} \right) = 1, \,\,\,\, \widehat{\Phi}_2\circ \widehat{\Phi}_2 \, = \, \id_{T\Ss} \, - \, \, \,\widehat{\eta}_2\otimes K^M_{\xi_2}, \,\,\,\,\,\, \widehat{\Phi}_2 \left(K^{\Ss}_{\xi_2} \right) = 0.
}
Therefore we need only show that 
\alst{
& \widehat{g_{\sst \Ss}}(\widehat{\Phi}_2(X), \widehat{\Phi}_2(Y)) =  -\widehat{g_{\sst \Ss}}(X, Y) \, + \, \epsilon_2 \, \widehat{\eta}_2(X)\,\widehat{\eta}_2(Y),\,\,\,\,\, d\widehat{\eta}_2 \,(X,Y) \, = \, \widehat{g_{\sst \Ss}}(\widehat{\Phi}_2(X), \, Y)\,\,\,\,\, X, Y \in T\Ss.
} 

\noindent Let $\eta_i \,(\cdot) = \,g_{\sst M}\left(K^M_{\xi_i}, \, \cdot \right)$. Then $\widehat{\eta_i} = \iota^{\ast} \eta_i$. Observe that for any $X, Y \in T\Ss$, 
\alst{
\widehat{g_{\sst \Ss}}(\widehat{\Phi}_2(X), \widehat{\Phi}_2(Y)) \, 
& = \, g_{\sst M} \left(SX + \eta_2(X) \euler ,\, SY + \eta_2(Y) \euler \right) \\
& = \, -g_{\sst M}\left(X, Y \right)  +  \eta_2(X)\eta_2(Y) \,g_{\sst M}(\euler, \euler) + \eta_2(X) \,g_{\sst M}(SY,  \euler)  + \eta_2(Y)\,g_{\sst M}(SY,  \euler)\\
& = \, -g_{\sst M}\left(X, Y \right)  +  \eta_2(X)\eta_2(Y) \,g_{\sst M}(\euler, \euler) + \eta_2(X) \,g_{\sst M}(Y,  K^M_{\xi_2})  + \eta_2(Y)\,g_{\sst M}(Y,  K^M_{\xi_2})\\
& = \, -g_{\sst M}\left(X, Y \right) \, + \, 2\rho_0 \cdot \,\eta_2(X)\eta_2(Y)\, + \, \tau_2\, \eta_2(X)\eta_2(Y) \, + \,\tau_2\, \eta_2(X)\eta_2(Y)\\
& = \, -g_{\sst M}(X, Y) \, + \,(2 - 2\rho_0) \, \tau_2 \, \eta_2(X)\,\eta_2(Y) \,\,\,\,\,\,\,\,\,\,\,\,\,\,\,\,\, \text{(since $\tau_2 = -1$)}\\
& = -\widehat{g_{\sst \Ss}}(X, Y) \, + \, \tau_2 \, \widehat{\eta}_2(X)\,\widehat{\eta}_2(Y)\\
& = \varepsilon_2 \left(\,-\widehat{g_{\sst \Ss}}(X, Y) \, + \, \tau_2\, \widehat{\eta}_2(X)\,\widehat{\eta}_2(Y)\, \right)\,\,\,\,\,\,\,\,\,\,\,\,\,\,\,\,\, \text{(since $\varepsilon_2 = 1$)}.
}

We now check the second condition
\alst{
\widehat{g_{\sst \Ss}}(\widehat{\Phi}_2(X), \, Y) \,
& = \,g_{\sst M}\left( SX + \widehat{\eta}_2(X) \euler ,\, Y\right)\\
& = \, g_{\sst M}\left(SX, \, Y \right) \, + \, \widehat{\eta_2}(X)\,g_{\sst M}\left(\euler, Y\right)\\
& = g_{\sst M}\left(SX, \, Y \right) \, = \, \omega_2(X, Y) \, = \, \iota^{\ast}\omega_2(X, Y),
}
for all $X, Y \in T\Ss$. It follows from eq. \eqref{eq: identity of omega} and eq.\eqref{eq: eta identities} and the fact that external derivative commutes with the pull-back that $d\widehat{\eta_2} =\tau_2 \,\iota^{\ast}\omega_2$. This shows that $d\eta_2(X,Y) = \tau_2\,\widehat{g_{\sst \Ss}}(\widehat{\Phi}_2(X), \, Y)$. Thus $(\widehat{g_{\sst \Ss}}, \,\xi_2, \, \widehat{\eta}_2, \, \widehat{\Phi}_2)$ defines a para contact metric structure on $S$. 

Similar arguments show that $(\widehat{g_{\sst \Ss}}, \,\xi_1, \, \widehat{\eta}_1, \, \widehat{\Phi}_1)$ and $(\widehat{g_{\sst \Ss}}, \,\xi_3, \, \widehat{\eta}_3, \, \widehat{\Phi}_3)$ define a pseudo and a para contact metric structure respectively. Moreover, the vector fields $(K^{\Ss}_{\xi_1}, K^{\Ss}_{\xi_2}, K^{\Ss}_{\xi_3})$ clearly satisfy the split quaternionic relations \eqref{eq: split quaternionic relations}. The claim thus follows from the statement of Theorem \ref{thm: split 3-sasakian}.
\end{proof}

Note that since the metrics on the level-sets $\Ss_c := \rho_0^{-1}(c)$ are homothetic, a split 3-Sasakian structure can be defined on every level-set.

\begin{note}
\label{note: note on hypersympl. manifolds as metric cones}
Analogous to the hyperK\"ahler case, it can be easily seen that a metric cone over any split 3-Sasakian manifold is a hypersymplectic manifold, with a free, permuting action of ${\rm Sp}(1,\B)$ and a hypersymplectic potential. In particular, the obstruction $\rho_2$ vanishes. This can be considered as a characterizing property of such hypersymplectic manifolds. In other words, if $\rho_2$ vanishes, then the hypersymplectic manifold in consideration, can be written as a metric cone over a split 3-Sasakian manifold, as constructed above.
\end{note}

\subsection{Para quaternionic K\"ahler manifolds}

A \emph{para-quaternionic K\"ahler manifold} is a, pseudo-Riemannian manifold of dimension $4n$, whose holonomy is contained inside the group 
\eqst{
{\rm Sp}(n, \B)\cdot {\rm Sp}(1, \B) \, = \, {\rm Sp}(n, \B)\times_{\pm 1} {\rm Sp}(1, \B).
}
Equivalently, we say that a manifold $N$ is \emph{almost para-quaternionic K\"ahler manifold} if there exists a sub-bundle $\Ii'\subset \End(TN)$ which is locally spanned by a triple $(I, S, T)$ satisfying the split quaternionic relations \eqref{eq: split quaternionic relations}. For $n >1$, the requirement that the holonomy of $N$ be contained inside ${\rm Sp}(n, \B)\cdot {\rm Sp}(1, \B)$ is equivalent to asking the sub-bundle $\Ii'$ being preserved by the Levi-Civita connection. If $n \geq 3$, then, this is equivalent to showing that the globally defined 4-form
\eqst{
\Omega \, = \, \omega_1 \wedge \omega_1 \, - \, \omega_2 \wedge \omega_2 \, - \, \omega_3 \wedge \omega_3
}
is closed. For $n=1$, we additionally require that the manifold be self-dual and Einstein.

\begin{thm}\cite{gml01}
Any para-quaternionic K\"ahler manifold $(N, g_{\sst N}, \widehat{\Omega})$ is Einstein, provided that the dimension of $N$ is greater than 4.
\end{thm}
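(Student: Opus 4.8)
The plan is to reduce the statement to the pointwise identity $\mathrm{Ric} = \lambda\, g_{\sst N}$ for some function $\lambda$, and then promote it to the full Einstein condition by Schur's lemma. Schur's argument holds verbatim in the pseudo-Riemannian category: the contracted second Bianchi identity $\delta\,\mathrm{Ric} = \tfrac12\, d\,\mathrm{scal}$, combined with $\mathrm{Ric} = \lambda\, g_{\sst N}$ and $\mathrm{scal} = 4n\lambda$, forces $(2n-1)\,d\lambda = 0$, hence $d\lambda = 0$. So the whole problem reduces to establishing that $\mathrm{Ric}$ is pointwise proportional to the metric, and it is at this step, not at Schur's lemma, that the hypothesis $\dim N = 4n > 4$ is genuinely used.

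To obtain the pointwise proportionality I would exploit that the Riemann curvature tensor $R$, viewed as a section of $\mathrm{Sym}^2(\Lambda^2 T^{\ast}N)$, takes values in $\mathrm{Sym}^2$ of the holonomy algebra $\mf{sp}(n,\B)\oplus\mf{sp}(1,\B)$ and satisfies the first Bianchi identity; at each point it therefore lies in the space $\mathcal{K}$ of formal curvature tensors subject to these two constraints. The decisive observation for the para case is that this is a purely algebraic, signature-independent condition: upon complexification the structure algebra becomes $\mf{sp}(2n,\C)\oplus\mf{sp}(2,\C)$, which is precisely the complexified holonomy algebra of an ordinary (positive-definite) quaternionic K\"ahler manifold, and $TN\otimes\C \cong E\otimes H$ with $E$, $H$ the standard representations of the two factors ($\dim_{\C}E = 2n$, $\dim_{\C}H = 2$). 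Consequently the representation-theoretic decomposition of $\mathcal{K}$ carried out by Alekseevsky and Salamon in the Riemannian setting applies without change.

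That decomposition yields $\mathcal{K} = \R\cdot R_0 \oplus \mathcal{W}$, where $R_0$ is the curvature of the model symmetric space (para-quaternionic projective space) and $\mathcal{W}$ consists of tensors with values in $\mf{sp}(n,\B)$ whose Ricci contraction vanishes, the para-analogue of the fact that ${\rm Sp}(n,\B)$-holonomy forces Ricci-flatness. Writing $R = s\, R_0 + W$ pointwise, with $s$ a function and $W$ a section valued in $\mathcal{W}$, the Ricci contraction annihilates $W$ and leaves $\mathrm{Ric} = s\cdot\mathrm{Ric}(R_0) = \lambda\, g_{\sst N}$, as required. The condition $n \geq 2$ enters exactly here: for $n=1$ one has $\mf{sp}(1,\B)\oplus\mf{sp}(1,\B)\cong\mf{so}(2,2)$, the holonomy reduction is empty, and $\mathcal{W}$ acquires an extra self-dual Weyl summand that survives the contraction, so the argument collapses and one must impose self-duality and the Einstein property by hand, exactly as in the definition of a para-quaternionic K\"ahler $4$-manifold.

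The step I expect to be the main obstacle is establishing the splitting $\mathcal{K} = \R\, R_0 \oplus \mathcal{W}$ together with the vanishing of the Ricci contraction on $\mathcal{W}$. Concretely, one must decompose the space of algebraic curvature tensors with values in $\mf{sp}(2n,\C)\oplus\mf{sp}(2,\C)$ into irreducibles under this group, intersect with the kernel of the first Bianchi map, and verify that the Ricci contraction kills every summand except the one-dimensional model piece; the $S^4 H$-type and the $\mf{sp}(n,\B)$-valued pieces must contribute nothing. In addition one must check that the real structure cutting out the split real form is compatible with the complex decomposition, so that the complex computation descends to $\R$ and the neutral signature is accounted for correctly, though at the level of complexified representations this compatibility is automatic.
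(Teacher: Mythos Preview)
Your proposal is correct and is precisely the approach the paper indicates: the paper does not give its own proof but cites \cite{gml01} and remarks that ``the representation theoretic argument by S.~Salamon \cite{sal82} can also be adapted to the pseudo-Riemannian setting,'' which is exactly the complexification-and-decomposition strategy you outline. Your identification of the key step (the $\mathcal{K}=\R\cdot R_0\oplus\mathcal{W}$ splitting with $\mathrm{Ric}|_{\mathcal{W}}=0$) and of the role of the hypothesis $n\geq 2$ match the standard Salamon argument, so there is nothing to add.
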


The representation theoretic argument by S. Salamon \cite{sal82} can also be adapted to the pseudo-Riemannian setting to prove the above theorem.

A wide range of examples of para-quaternionic K\"ahler manifolds can be constructed by adapting LeBrun's construction of quaternionic K\"ahler manifolds \cite{lebrun89}, to the pseudo-Riemannian case \cite{djs05}. Starting with a real analytic manifold $F$ of dimension $2n + 1$, endowed with an indefinite metric, it is possible to construct a para-quaternionic manifold of dimension $4n$. Different manifolds which are conformal to $F$ give rise to distinct para-quaternionic K\"ahler manifolds. We can thus construct a wide variety of para-quaternionic K\"ahler manifolds of dimension greater than four.

Looking at Berger's list \cite{berg57, djs05}, one can also construct symmetric para-quaternionic K\"ahler manifolds of the type $G/H$, where $G$ is semi-simple. Symmetric para-quaternionic K\"ahler manifolds have been completely classified by D. Alekseevsky and V. Cort\'es \cite{ac05}.

In the hyperK\"ahler case, the quotient of the 3-Sasakian manifold (a level-set of the hyperK\"ahler potential) by the group ${\rm Sp}(1)$ produces a quaternionic K\"ahler manifold of positive scalar curvature. This, however, cannot be directly carried over to the hypersymplectic situation as the group ${\rm SU}(1,1)$ is non-compact and therefore the quotient may not even be Hausdorff. However, if the ${\rm Sp}(1,\B)$-action \textbf{is proper}, then, we show that the quotient of the split 3-Sasakian manifold by ${\rm Sp}(1,\B)$ is a para-quaternionic K\"ahler manifold. Henceforth, we assume that the ${\rm Sp}(1,\B)$-action on $M$ proper.

Let $N = \Ss/{\rm Sp}(1,\B)$ be the quotient of the split 3-Sasakian manifold and consider the diagram,

\begin{wrapfigure}{r}{0.45\textwidth}
\vspace{-4mm}
\hspace{9mm}
\begin{tikzpicture}[->, node distance=3cm, auto, shorten >=1pt, every edge/.style={font=\footnotesize, draw}, fill=black]
         \node (Q)     {$\Ss$};
         \node (M)    [right of=Q]    {$M$};
         \node (X)    [below of=Q]    {$N$};

         \draw                     (Q) -- node [left, midway] {$\pi$} (X);        
         \draw  [right hook-latex] (Q) -- node [above, midway] {$\iota$} (M);
                  
\end{tikzpicture}
\vspace{3mm}
\label{diag: quat-kah and sasakian}
\end{wrapfigure}
\noindent where $\iota$ is the pseudo-Riemannian embedding and the map $\pi$ is the pseudo-Riemannian principal submersion. The normal bundle $\Nn$ is the 1-dimensional vector bundle $\iota^{\ast}\text{span}\{\euler\} \subset \iota^{\ast}TM$. The pull-back bundle $\iota^{\ast}TM$ splits into the direct sum $\iota^{\ast}TM = \Nn \oplus \,T\Ss $,

\noindent The pull-back metric on $\Ss$ is of signature $(2n-1, 2n)$. Further, $T\Ss$ splits into a direct sum $\Vv \oplus \Hh$, where
\eqst{
\Vv \, = \, \text{Span}\left\lbrace K^M_{\xi_1}, K^M_{\xi_2}, K^M_{\xi_3} \right\rbrace \,\,\,\, \text{and} \,\,\,\, \Hh \, = \,\cap_{i = 1}^3 \ker \widehat{\eta}_i.
}
In conclusion, the pullback-bundle splits as
\eqst{
\iota^{\ast} TM \, = \, \Nn \, \oplus \, \Vv \, \oplus \, \Hh.
}
We will call $\Hh$ the ``horizontal bundle" of $T\Ss$. Let $\beta_i(\cdot, \cdot) = \widehat{g}_{\sst \Ss}(\widehat{\Phi}_i(\cdot), \cdot)$ and $\theta_i := \beta_i + \epsilon_{ijk} \, \eta_j \wedge \eta_k$. Define the 4-form
\eqst{
\widehat{\Omega}\, = \, \,\theta_1 \wedge \theta_1 \, - \, \theta_2 \wedge \theta_2 \, - \, \theta_3 \wedge \theta_3 \,\,\,\, i=1,2,3.
}
Then, $\widehat{\Omega}$ is an ${\rm Sp}(1, \B)$-invariant, horizontal 4-form and therefore it descends to a 4-form $\Omega_N$ on $N$ with $\pi^{\ast}\Omega_N = \widehat{\Omega}$.

Observe that since $\rho_2 = 0$, we have that 
\eqst{
K^{\Ss}_{\xi_i} \, = -\varepsilon_i \, \I_{\xi_i}\euler, \,\,\,\,\,\,\,\,\, \I_{\xi_i}K^{\Ss}_{\xi_j} \, = \, \tau^{ijk}\, K^{\Ss}_{\xi_k} \, + \, \delta_{ij}\, \varepsilon_i\, \euler.
}
where $\tau^{ijk}$ denotes the sign of the permutation $(i,j,k)$. It follows that $\Vv \oplus \Nn$ is invariant under $I, S, T$ and therefore, $\Hh$ is invariant under $I,S,T$. We thus get an ${\rm Sp}(1,\B)$-invariant almost para-quaternionic K\"ahler structure on $\Hh$, which descends to $N$. The 4-form associated to the almost para-quaternionic K\"ahler structure is $\Omega_N$. In order to show that the structure is para-quaternionic K\"ahler, we need to show that the quotient metric $g_{\sst N}$ has holonomy group contained in ${\rm Sp}(n, \B)\cdot {\rm Sp}(1, \B)$. Or equivalently,
\eqst{
\nabla^N\, \Omega_N \, = \, 0,
}
where $\nabla^N$ is the Levi-Civita connection of the metric $g_{\sst N}$. Observe that for any $W, X, Y, Z \in \Hh$, 
\eqst{
\widehat{\Omega}\,(W, X, Y, Z) = (\iota^{\ast}\omega_1\wedge\iota^{\ast}\omega_1 \, - \, \iota^{\ast}\omega_2\wedge\iota^{\ast}\omega_2 \, - \, \iota^{\ast}\omega_3\wedge\iota^{\ast}\omega_3)\, (W, X, Y, Z).
}
The Levi-Civita connection on $M$ induces a connection $\nabla^{\Ss}$ on $\Ss$, which is precisely the Levi-Civita connection of the pull-back metric $\iota^{\ast}g_{\sst M} = \widehat{g}_{\sst \Ss}$. It follows that $\nabla^{\Ss}\widehat{\Omega} = 0$. From the fact that $\Ss \rightarrow N$ is a pseudo-Riemannian submersion and the standard computation using O' Neil's formula (\cite{gl88}, Thm. 3.1), we conclude that the holonomy of the quotient metric is a sub-group of ${\rm Sp}(n, \B)\cdot {\rm Sp}(1, \B)$. Thus, $N$ is a para quaternionic K\"ahler manifold. Note that the signature of $g_{\sst N}$ is $(2n-2, 2n-2)$. To sum-up

\begin{thm}
\label{thm: pqk manifold as quotient of split 3-sasakian}
Suppose that $M$ is a hypersymplectic manifold of dimension $4n$ with a free and proper permuting action of ${\rm Sp}(1,\B)$. Assume that the obstruction $\rho_2 = 0$. Then, the quotient of any level set $\rho_0^{-1}(c)/ {\rm Sp}(1, \B)$ is a para-quaternionic K\"ahler manifold of dimension $4n-4$, endowed with a metric of signature $(2n-2, 2n-2)$.
\end{thm}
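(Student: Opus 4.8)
The plan is to realize $N=\rho_0^{-1}(c)/{\rm Sp}(1,\B)$ as the base of the pseudo-Riemannian submersion $\pi:\Ss\to N$ and to transport the split quaternionic data from $\Ss$ down to $N$. Since the ${\rm Sp}(1,\B)$-action is assumed free and proper, the quotient $N$ is a smooth Hausdorff manifold, and a dimension count ($\dim\Ss=4n-1$, fibres of dimension $3$) gives $\dim N=4n-4$; this is precisely where properness is essential, since without it the quotient need not even be Hausdorff. By Theorem \ref{thm: level set of potential is split 3-sasakian} the level set $\Ss$ carries a split 3-Sasakian structure, so I may freely use \eqref{eq: 3-sasakian relations} and the consequences of $\rho_2=0$ recorded above.

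Next I would produce the almost para-quaternionic K\"ahler structure on $N$. Using the splitting $\iota^{\ast}TM=\Nn\oplus\Vv\oplus\Hh$ together with the identities $K^{\Ss}_{\xi_i}=-\varepsilon_i\,\I_{\xi_i}\euler$ and $\I_{\xi_i}K^{\Ss}_{\xi_j}=\tau^{ijk}K^{\Ss}_{\xi_k}+\delta_{ij}\varepsilon_i\euler$, one sees that $\Nn\oplus\Vv$ is preserved by each of $I,S,T$; since these endomorphisms are $g_{\sst M}$-skew and $\Hh$ is the $g_{\sst M}$-orthogonal complement of $\Nn\oplus\Vv$, the horizontal bundle $\Hh$ is $I,S,T$-invariant as well. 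The permuting action rotates $(I,S,T)$ by ${\rm SO}^+(1,2)$, so no individual structure descends, but the rank-$3$ sub-bundle they span is ${\rm Sp}(1,\B)$-invariant and horizontal, hence basic; it therefore descends to a sub-bundle $\Ii'\subset\End(TN)$ locally spanned by a split quaternionic triple. Together with the quotient metric $g_{\sst N}$ (the push-forward of $\widehat{g}_{\sst \Ss}|_{\Hh}$) this gives an almost para-quaternionic K\"ahler structure, whose fundamental $4$-form is the descent $\Omega_N$ of the basic form $\widehat{\Omega}$.

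The crux is to upgrade ``almost'' to genuine para-quaternionic K\"ahler by proving $\nabla^N\Omega_N=0$, equivalently that $\nabla^N$ preserves $\Ii'$, i.e.\ that the holonomy of $g_{\sst N}$ lies in ${\rm Sp}(n,\B)\cdot{\rm Sp}(1,\B)$. The strategy is: the induced connection $\nabla^{\Ss}$ on the hypersurface $\Ss$ is the Levi-Civita connection of $\widehat{g}_{\sst \Ss}$, and because $\widehat{\Omega}$ is assembled from the restrictions $\iota^{\ast}\omega_i$ of the parallel forms on $M$, corrected by the contact terms $\eta_j\wedge\eta_k$, one checks $\nabla^{\Ss}\widehat{\Omega}=0$. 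Since $\pi:\Ss\to N$ is a pseudo-Riemannian submersion, O'Neill's formulas (\cite{gl88}, Thm.\ 3.1) relate $\nabla^N$ on horizontal lifts to the horizontal part of $\nabla^{\Ss}$, the discrepancy being governed by the O'Neill $A$-tensor. The main technical point is that these correction terms either annihilate $\Omega_N$ or recombine within $\Ii'$, which follows from the $I,S,T$-invariance of $\Hh$ and the split 3-Sasakian relations among the $K^{\Ss}_{\xi_i}$; passing parallelism through the submersion then yields $\nabla^N\Omega_N=0$.

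Finally, the signature is bookkeeping: $\widehat{g}_{\sst \Ss}$ has signature $(2n-1,2n)$, the vertical space $\Vv$ inherits signature $(1,2)$ from the norms $(\rho_0,-\rho_0,-\rho_0)$ of $(K^{\Ss}_{\xi_1},K^{\Ss}_{\xi_2},K^{\Ss}_{\xi_3})$, so the horizontal complement $\Hh\cong TN$, and hence $g_{\sst N}$, has signature $(2n-2,2n-2)$. I expect the main obstacle to be the third step: controlling the O'Neill tensor contributions so that parallelism descends cleanly, and, in the borderline case $n=2$ (where $\dim N=4$), supplementing the $4$-form argument with the self-duality and Einstein conditions required by the definition of para-quaternionic K\"ahler manifolds in dimension four.
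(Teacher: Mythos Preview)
Your proposal is correct and follows essentially the same route as the paper: the splitting $\iota^{\ast}TM=\Nn\oplus\Vv\oplus\Hh$, the $I,S,T$-invariance of $\Hh$ via the identities coming from $\rho_2=0$, descent of the basic $4$-form $\widehat{\Omega}$ to $\Omega_N$, parallelism $\nabla^{\Ss}\widehat{\Omega}=0$, and then the O'Neill computation citing \cite{gl88}, Thm.~3.1, to obtain $\nabla^N\Omega_N=0$ and the signature $(2n-2,2n-2)$. You are in fact more cautious than the paper in flagging the $4$-dimensional quotient case ($n=2$), which the paper does not address separately.
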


The converse of the above statement is also true. Namely, if the quotient $\rho_0^{-1}(c)/ {\rm Sp}(1, \B)$ is para-quaternionic K\"ahler, then, $\rho_2 = c$. This follows directly from the arguments in proof of Theorem 2.15 of \cite{bgm93} and Lemmas \ref{lem: identity of omega}, \ref{lem: identity of gamma_1} and \ref{lem: euler vf}. 

\subsection*{Swann bundles on para-quaternionic K\"ahler manifolds}
Going in the the other direction, given a para-quaternionic K\"ahler manifold, consider its reduced ${\rm Sp}(n,\B)\cdot {\rm Sp}(1, \B)$-frame bundle $F$. Then, $\scr{S}(N) = F/{\rm Sp}(1,\B)$ is a principal ${\rm SO}^+(1,2)$ bundle. Let $\widetilde{\B}$ denote the space of all the zero-divisors in $\B$ and let $\B^{\ast} := (\B \setminus\{ 0 \}) \setminus \widetilde{\B}$. %

The permuting ${\rm Sp}(1, \B)$-action on $\B$, given by $(q, h) \mapsto h\overline{q}$, $h \in \B$, $q \in {\rm Sp}(1, \B)$, descends to an action of ${\rm SO}^+(1,2)$ on $\B^{\ast}/\Z_2$. Note that the action is transitive. There is yet another action of ${\rm Sp}(1, \B)$ on $\B^{\ast}$, which is given by left multiplication and which commutes with the first one. It therefore descends to an action of ${\rm SO}^+(1,2)$ on $\B^{\ast}/\Z_2$.

Consider the principal bundle
\eqst{
\swann_+ \, := \, \scr{S}(N) \times_{{\rm SO}^+(1,2)}(\B^{\ast}/\Z_2) \longrightarrow N.
}

The total space $\swann_+$ of the bundle, is a hypersymplectic manifold, with the induced (permuting) action of ${\rm SO}^+(1,2)$ on the fibres. This is the \emph{positive Swann bundle} constructed by Dancer, J{\o}rgensen and Swann \cite{djs05}. Note that since $\B^{\ast} \cong \rm{Sp}(1,\B) \times (\R \setminus \{0\})$, the fibre at each point
\eqst{
\B^{\ast}/\Z_2 \, \cong \, \rm{SO}^+(1,2) \times \R_{>0}.
}
Here, we think of $\Z_2$ as the normal sub-group generated by the element ${(-1,-1)} \in \rm{Sp}(1,\B) \times (\R \setminus \{0\})$. The isomorphism is seen quite easily by considering the adjoint action of $\B^{\ast}$ on $\B$:
\eqst{
(q, r) \cdot h \, \longmapsto \, (rq)\, h \, (\overline{rq}), \,\,\,\, \text{where} \,\,\, q \in {\rm Sp}(1, \B), \,\,\, r \in \R\setminus\{0\} \,\,\, \text{and} \,\,\, h \in \B.
}
Therefore, the total space of the Swann bundle can alternatively be written as a metric cone
\eqst{
\swann_+ \, = \, \scr{S}(N) \times \R_{>0}, \,\,\,\,\,\,\, g_{\sst \swann} \, = \, dr^2 \, + \, r^2 \, \left(g_{\sst N} + g_{\sst {\rm SO}^+(1,2)} \right).
}
This shows that $\scr{S}(N)$ is a split 3-Sasakian manifold. Note that the hypersymplectic potential on $\swann_+$ is just $\rho_0(s, r) = \frac{1}{2}\, r^2$ and the Euler vector field $\euler = r \,\partial/ \partial r$.

Using Le Brun's construction \cite{lebrun89}, one can construct a para-quaternionic K\"ahler manifold of dimension $4k$ from a real-analytic, pseudo-Riemannian manifold of dimension $k+1$ \cite{djs05}. Moreover, different manifolds, conformal to the real-analytic manifold give rise to distinct para-quaternionic K\"ahler manifold. In this way, we have a plethora of examples of para-quaternionic K\"ahler manifolds and therefore also of hypersymplectic manifolds with a permuting ${\rm SO}^+(1,2)$-action.


\section{Examples}

\subsection{Commutativity of constructions and split quaternionic modules}
\label{ex: split quaternionic modules}

Many non-trivial examples of hyperK\"ahler manifolds with permuting ${\rm Sp}(1)$-action are obtained as reductions of the flat-space $\HH^{n}$. Analogously, we use the Marsden-Weinstein construction for constructing non-trivial examples of hypersymplectic manifolds, carrying a permuting ${\rm Sp}(1,\B)$-permuting action, starting with the flat-space $\B^n$. 

Suppose that $(M, g_{\sst M}, I, S, T)$ is a hypersymplectic manifold, with a free and isometric action of a compact Lie group $G$ that preserves the hypersymplectic structure. Let $\mu: M \rightarrow \mf{g}^{\ast}\otimes\mf{sp}(1,\B)^{\ast}$ denote a hypersymplectic moment map and assume that $0$ is a regular value of the moment map. Let $\mf{O} = \{\, K^M_{\xi}\, \vert \, \xi\in \mf{g}\, \}$. Suppose that the following holds:
\begin{enumerate}
\item $G$ acts freely on $\mu^{-1}(0)$
\item Rank $d\mu$  = 3 dim $\mf{g}$ at each point of $M$
\item $\mf{O} \cap \mf{O}^{\perp} = \{0 \}$.
\end{enumerate}

The above conditions are referred to as conditions (F), (S) and (D) respectively in \cite{djs05}.

\begin{thm}[Hypersymplectic reduction, \cite{hit90}]
The quotient $M' = \mu^{-1}(0)/G$ is a smooth hypersymplectic manifold, with respect to the induced quotient hypersymplectic structure $(I', S', T')$ and the induced quotient metric $g'_{\sst M'}$.
\end{thm}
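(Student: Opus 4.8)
The plan is to transcribe the Marsden--Weinstein hyperK\"ahler quotient construction, the single new feature being that the neutral signature must be kept under control, and that this is exactly what condition (D) supplies. First I would check that $M'$ is smooth. By condition (S) the map $\mu$ has constant rank $3\dim\mf{g}$ along $\mu^{-1}(0)$, so $0$ is a regular value and $\mu^{-1}(0)$ is a smooth submanifold of $M$ of codimension $3\dim\mf{g}$. Since $G$ is compact its action is proper, and by condition (F) it is free on $\mu^{-1}(0)$; hence $\pi\colon\mu^{-1}(0)\to M'=\mu^{-1}(0)/G$ is a principal $G$-bundle and $M'$ is a smooth manifold of dimension $4n-4\dim\mf{g}=4(n-\dim\mf{g})$.

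Next I would set up the tangent-space identification at a fixed $p\in\mu^{-1}(0)$. Write $\mf{O}_p$ for the tangent space to the $G$-orbit. The moment map property $\iota_{K^M_\xi}\omega_i=d\langle\mu_i,\xi\rangle$ gives $\ker d\mu_p=(I\mf{O}_p+S\mf{O}_p+T\mf{O}_p)^{\perp}$, with orthogonality taken for $g_{\sst M}$, and I would take as horizontal space $\Hh_p:=(\mathcal V_p)^{\perp}$, where $\mathcal V_p:=\mf{O}_p+I\mf{O}_p+S\mf{O}_p+T\mf{O}_p$. Because the pairwise products of $I,S,T$ are, up to sign, again $I,S,T$ by the relations \eqref{eq: split quaternionic relations}, the subspace $\mathcal V_p$ is preserved by $I,S,T$; as $I,S,T$ are $g_{\sst M}$-skew-adjoint, the complement $\Hh_p$ is preserved as well.

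The crux, and the only place where the neutral signature demands an extra hypothesis, is the non-degeneracy of $g_{\sst M}|_{\mathcal V_p}$. Equivariance of $\mu$ makes each $\langle\mu_i,\eta\rangle$ have differential vanishing on $\mu^{-1}(0)$, whence $g_{\sst M}(K^M_\xi,\,A\,K^M_\eta)=0$ at $p$ for $A\in\{I,S,T\}$; together with $g_{\sst M}(IX,IY)=g_{\sst M}(X,Y)$ and $g_{\sst M}(SX,SY)=g_{\sst M}(TX,TY)=-g_{\sst M}(X,Y)$, this renders the Gram matrix of $\mathcal V_p$ block-diagonal with blocks $+G,+G,-G,-G$, where $G$ is the Gram matrix of $g_{\sst M}|_{\mf{O}_p}$. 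Thus $g_{\sst M}|_{\mathcal V_p}$ is non-degenerate exactly when $g_{\sst M}|_{\mf{O}_p}$ is, i.e. exactly when $\mf{O}_p\cap\mf{O}_p^{\perp}=\{0\}$, which is condition (D). This yields the orthogonal splitting $T_pM=\mathcal V_p\oplus\Hh_p$, exhibits $\Hh_p\subset\ker d\mu_p$ as a complement to $\mf{O}_p$ there (a dimension count with $\dim\mathcal V_p=4\dim\mf{g}$ gives $\dim\Hh_p=4n-4\dim\mf{g}$), and shows $g_{\sst M}|_{\Hh_p}$ is non-degenerate; a short bookkeeping with the signs of the blocks gives $\mathcal V_p$ signature $(2\dim\mf{g},2\dim\mf{g})$, and hence $\Hh_p$ the neutral signature $(2(n-\dim\mf{g}),2(n-\dim\mf{g}))$.

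Finally I would push the structure down. Since $d\pi_p$ restricts to an isomorphism $\Hh_p\to T_{[p]}M'$, I define $I',S',T'$ and $g'_{\sst M'}$ by transporting $I,S,T$ and $g_{\sst M}|_{\Hh_p}$ along it; $G$-invariance of the ambient data and of the splitting makes these independent of the choice of $p$ in the fibre, and they inherit the relations \eqref{eq: split quaternionic relations}, the compatibility, and the neutral signature. For closedness of the associated $2$-forms, each $\iota^{\ast}\omega_i$ is $G$-invariant and horizontal (horizontal because $\iota_{K^M_\xi}\omega_i=d\langle\mu_i,\xi\rangle$ annihilates $T\mu^{-1}(0)=\ker d\mu$), hence basic, so $\iota^{\ast}\omega_i=\pi^{\ast}\omega_i'$ for a unique $2$-form $\omega_i'$ on $M'$, which under the identification is the fundamental $2$-form of $g'_{\sst M'}$ and the matching structure among $I',S',T'$; then $\pi^{\ast}d\omega_i'=\iota^{\ast}d\omega_i=0$ and injectivity of $\pi^{\ast}$ forces $d\omega_i'=0$. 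Closedness of all three $2$-forms is the hypersymplectic condition, and integrability of $I',S',T'$ then follows from Hitchin's argument recalled in Section 3. I expect the only genuine obstacle to be the signature bookkeeping: in the positive-definite hyperK\"ahler case the restriction of the metric to any subspace is automatically non-degenerate, so one never needs to isolate a condition like (D), whereas here it is precisely (D) that keeps $\mathcal V_p$, and therefore $\Hh_p$, non-degenerate; everything else is a formal transcription of the Riemannian proof.
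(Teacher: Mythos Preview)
The paper does not actually prove this theorem: it is stated with a citation to Hitchin \cite{hit90}, and the only commentary the paper offers is the one-line gloss immediately following the statement, namely that conditions (F) and (S) ensure smoothness of the quotient while condition (D) ensures non-degeneracy of the induced hypersymplectic structure. Your proof is a correct and complete fleshing-out of precisely that gloss, carried out along the standard Marsden--Weinstein lines. In particular, your identification of the block-diagonal Gram matrix $(+G,+G,-G,-G)$ on $\mathcal V_p=\mf{O}_p\oplus I\mf{O}_p\oplus S\mf{O}_p\oplus T\mf{O}_p$ is exactly the mechanism by which (D) controls non-degeneracy in neutral signature, and your observation that this is the one genuinely new ingredient over the hyperK\"ahler case matches the paper's remark that ``in contrast with the hyperK\"ahler case, (2) does not follow from (1) automatically, owing to the fact that the metric on $M$ is indefinite.'' So there is nothing to compare: the paper defers to the literature, and what you have written is the expected proof.
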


Conditions (1) and (2) guarantee a smooth quotient manifold while (3) says that the hypersymplectic structure is non-degenerate. In contrast with the hyperK\"ahler case, (2) does not follow from (1) automatically, owing to the fact that the metric on $M$ is indefinite. In most known examples, (3) is usually never guaranteed. Consequently, the quotients are hypersymplectic manifolds, away from some singular locus. For example, suppose $G = \rm{U}(1)$. Then, (3) is violated precisely along the locus of all those points $p$, where $g_{\sst M}(K^M_{\imag}\vert_p,K^M_{\imag}\vert_p) = 0$.


Let $(N, g_{\sst N}, \Omega_N)$ be a para-quaternionic K\"ahler manifold. Let $(I,S,T)$ denote the local basis for the paraquaternionic K\"ahler structure and $\eta_I, \eta_S, \eta_T$ denote the corresponding local para-K\"ahler/ pseudo-K\"ahler 2-forms. Locally, for a Killing vector field $X$, we can define the 1-form 
\eqst{
\Theta (X) \, = \, \imag \,\eta_I(X, \cdot) \, + \, \mbf{s}\, \eta_S(X, \cdot) \, + \, \mbf{t}\, \eta_T(X, \cdot).
}
The form is independent of the the choice of the local basis $(I,S,T)$ and is therefore globally defined.

\begin{thm}[Para-quaternionic K\"ahler reduction, \cite{vuk03} (Thm. 5.2)]
\label{thm: para qk reduction}
Let $G$ be a Lie group acting freely and isometrically on a para-quaternionic K\"ahler manifold $(N, g_{\sst N}, \Omega_N)$. Assume that the group action preserves $\Omega_N$. Then, there exists a unique map $\mu: N \rightarrow \mf{g}^{\ast}\otimes \mf{sp}(1, \B)$ such that $d\mu = \iota_{\mf{g}}\, \Theta$.

Suppose that $\mu^{-1}(0)$ is a smooth submanifold of $N$, on which $G$ acts freely and properly, so that $N' = \mu^{-1}(0)/G$ is a smooth pseudo-Riemannian submersion. Then, $(N', g'_{\sst N'}, \Omega'_{N'})$ is again a para-quaternionic K\"ahler manifold, with respect to the induced para-quaternionic K\"ahler structure $\Omega'_{N'}$ and the induced metric $g'_{\sst N'}$
\end{thm}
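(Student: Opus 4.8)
The plan is to adapt the quaternionic K\"ahler reduction of Galicki and Lawson \cite{gl88} to the split-quaternionic, pseudo-Riemannian setting, establishing the two assertions---existence and uniqueness of $\mu$, and the para-quaternionic K\"ahler structure on $N'$---in turn. Write $K^N_\xi$ for the fundamental vector field of $\xi \in \mf{g}$, let $\Vv \subset TN$ be the vertical distribution $\mathrm{span}\{K^N_\xi\}$, and let $\Ii' \subset \End(TN)$ be the rank-three structure sub-bundle, which I identify fibrewise with $\mf{sp}(1,\B) \cong \mf{Im}(\B)$ via the split-quaternionic relations \eqref{eq: split quaternionic relations}. Since the scalar curvature of a genuine para-quaternionic K\"ahler manifold is nonzero, $\Ii'$ is the key global object even though $(I,S,T)$ and the local fundamental $2$-forms $\eta_I,\eta_S,\eta_T$ are only local.

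First I would construct $\mu$. Because $G$ acts isometrically and preserves $\Omega_N$, each $K^N_\xi$ preserves $\Ii'$, so $\mathcal{L}_{K^N_\xi}$ is a derivation of the split-quaternion algebra bundle that restricts to the imaginary part $\Ii'$. As $\mf{sp}(1,\B)$ is simple with trivial centre, this derivation is inner: there is a \emph{unique} section $\mu_\xi$ of $\Ii'$ with $\mathcal{L}_{K^N_\xi}\I = [\mu_\xi,\I]$ for all $\I \in \Gamma(\Ii')$, and $\langle\mu,\xi\rangle := \mu_\xi$ defines $\mu \colon N \to \mf{g}^\ast \otimes \mf{sp}(1,\B)$. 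The identity $d\mu = \iota_{\mf{g}}\Theta$ is then local: differentiating $\mathcal{L}_{K^N_\xi}\I = [\mu_\xi,\I]$ and feeding in the structure equations of para-quaternionic K\"ahler geometry (the Levi-Civita connection rotates $(I,S,T)$ by a local $\mf{sp}(1,\B)$-connection whose curvature is proportional to $\eta_I,\eta_S,\eta_T$ with the scalar curvature as constant) rewrites $d\langle\mu,\xi\rangle$ as $\imag\,\iota_{K^N_\xi}\eta_I + \mbf{s}\,\iota_{K^N_\xi}\eta_S + \mbf{t}\,\iota_{K^N_\xi}\eta_T$, which is exactly $\Theta(K^N_\xi)$. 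Uniqueness is built into the construction: any solution of $d\mu = \iota_{\mf{g}}\Theta$ differs from this canonical one by a constant section of $\mf{g}^\ast \otimes \mf{sp}(1,\B)$, and $G$-equivariance together with the triviality of the centre of $\mf{sp}(1,\B)$ forces that constant to vanish.

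Next I would set up the geometry of $Z := \mu^{-1}(0)$. Writing $\eta_I(K^N_\xi,v) = g_{\sst N}(IK^N_\xi,v)$ and so on, the defining identity gives $\ker d\mu_p = (\Ii'\Vv_p)^{\perp}$, so (as $0$ is a regular value, using freeness and nonzero curvature) $T_pZ = (\Ii'\Vv_p)^{\perp}$, with normal space $\Ii'\Vv_p = \mathrm{span}\{IK^N_\xi, SK^N_\xi, TK^N_\xi\}$ of dimension $3\dim\mf{g}$. The crucial point, which uses the nonvanishing scalar curvature, is the para-quaternionic analogue of the Galicki--Lawson relation: the vertical evaluations $\eta_I(K^N_\xi,K^N_{\xi'})$, $\eta_S(K^N_\xi,K^N_{\xi'})$, $\eta_T(K^N_\xi,K^N_{\xi'})$ are quadratic in $\mu$, hence vanish on $Z$; therefore $\Vv \perp \Ii'\Vv$ and $\Vv \subset T_pZ$. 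Setting $\Hh := (\Vv \oplus \Ii'\Vv)^{\perp}$, of dimension $\dim N - 4\dim\mf{g}$, yields the orthogonal splitting $T_pZ = \Vv_p \oplus \Hh_p$. A short computation shows $\Hh$ is $\Ii'$-invariant: for $h \in \Hh$ and $\I,\J \in \{I,S,T\}$ one has $g_{\sst N}(\I h,\J K^N_\xi) = -g_{\sst N}(h,\I\J K^N_\xi)$ with $\I\J K^N_\xi \in \Vv \oplus \Ii'\Vv$, so $\I h \perp \Vv \oplus \Ii'\Vv$. Thus $(I,S,T)$ descend through the pseudo-Riemannian submersion $\pi\colon Z \to N'$ to an almost para-quaternionic K\"ahler structure on $N'$, with $4$-form $\Omega_{N'}$ satisfying $\pi^{\ast}\Omega_{N'} = \iota^{\ast}(\eta_I\wedge\eta_I - \eta_S\wedge\eta_S - \eta_T\wedge\eta_T)$ on $\Hh$; each removed quaternionic line $\{K^N_\xi, IK^N_\xi, SK^N_\xi, TK^N_\xi\}$ contributes signature $(2,2)$, so the signature drops by $(2\dim\mf{g},2\dim\mf{g})$.

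Finally, to promote this to an honest para-quaternionic K\"ahler structure I would show the reduced holonomy lies in ${\rm Sp}(n',\B)\cdot{\rm Sp}(1,\B)$ with $n' = n - \dim\mf{g}$. Since $N$ is para-quaternionic K\"ahler, $\Ii'$ is $\nabla^N$-parallel; combining the Gauss--Codazzi relations for $Z \hookrightarrow N$ with O'Neill's formulas for the submersion $\pi$---exactly as in the computation preceding Theorem \ref{thm: pqk manifold as quotient of split 3-sasakian}---shows the reduced structure bundle on $N'$ is preserved by $\nabla^{N'}$, i.e. $\nabla^{N'}\Omega_{N'} = 0$ (for $n' \geq 3$ it is enough that $\Omega_{N'}$ be closed, and the cases $n' = 1,2$ are handled by the self-duality and Einstein conditions). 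I expect the main obstacle to be the non-degeneracy issues intrinsic to the indefinite setting: unlike the Riemannian case, neither $g_{\sst N}|_{TZ}$ nor $g_{\sst N}|_{\Hh}$ is automatically non-degenerate, and the splitting $\iota^{\ast}TN = \Hh \oplus \Vv \oplus \Ii'\Vv$ can fail where $\Vv$ degenerates---this is precisely why the hypothesis that $\pi$ be a \emph{pseudo-Riemannian} submersion is imposed. The genuinely new bookkeeping, absent in the hyperK\"ahler and quaternionic K\"ahler cases, is tracking the signs $(\varepsilon_1,\varepsilon_2,\varepsilon_3) = (1,-1,-1)$ attached to $(I,S,T)$ throughout every compatibility identity and in the O'Neill transfer.
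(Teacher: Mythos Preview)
The paper does not prove this theorem: it is quoted verbatim from \cite{vuk03} (Thm.~5.2) and used as a black box, so there is no ``paper's own proof'' to compare against. Your sketch is the natural adaptation of the Galicki--Lawson argument \cite{gl88} to the split-quaternionic setting, and is essentially what one expects the cited reference to contain.

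A few remarks on your write-up. First, your uniqueness argument is slightly off: the moment map here is a section of the bundle $\mf{g}^\ast \otimes \Ii'$ (the paper's notation $\mf{g}^\ast \otimes \mf{sp}(1,\B)$ is a mild abuse), not a map to a fixed vector space, so ``differs by a constant'' must be read as ``differs by a parallel section'', and uniqueness then comes from the nonzero scalar curvature making such sections rigid. Second, your claim that $\eta_I(K^N_\xi,K^N_{\xi'})$ etc.\ are ``quadratic in $\mu$'' is the heart of the Galicki--Lawson mechanism and deserves the actual identity rather than an assertion; in the para-quaternionic case the sign pattern $(\varepsilon_1,\varepsilon_2,\varepsilon_3)=(1,-1,-1)$ enters here and should be checked explicitly. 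Third, you correctly flag the non-degeneracy hypothesis as doing real work in the indefinite setting, which is indeed the only substantive departure from \cite{gl88}. None of these are gaps in the strategy, only places where the sketch would need to be fleshed out.
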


With the above two theorems at hand, we can directly adapt Swann's arguments in \cite{swann91} to the pseudo-Riemannian setting to show that the quotient construction commutes with reduction. Namely,

\begin{thm}
\label{thm: commutativity of quotients}
Let $(N, g_{\sst N}, \Omega_N)$ be a para-quaternionic K\"ahler manifold. Suppose that a Lie group $G$ acts isometrically,freely and properly, preserving the para-quaternionic K\"ahler structure. Then, $G$ induces an isometric action on $\swann$, which preserves the hypersymplectic structure on $\swann$. Moreover, the hypersymplectic quotient of $\swann$ by the $G$ action is the total space of the Swann bundle over the para-quaternionic K\"ahler quotient of $N$ by $G$.
\end{thm}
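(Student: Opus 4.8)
The plan is to follow Swann's strategy \cite{swann91}, adapted to the split setting, working throughout with the cone description $\swann = \scr{S}(N) \times \R_{>0}$ carrying the metric $g_{\sst \swann} = dr^2 + r^2(g_{\sst N} + g_{\sst {\rm SO}^+(1,2)})$. First I would lift the action. Since $G$ acts on $N$ by para-quaternionic K\"ahler isometries, it preserves the reduced ${\rm Sp}(n,\B)\cdot{\rm Sp}(1,\B)$-frame bundle $F$, hence acts on $\scr{S}(N) = F/{\rm Sp}(1,\B)$ by bundle automorphisms covering the action on $N$. This lifted action commutes with the principal ${\rm SO}^+(1,2)$-action defining $\swann$ and fixes the cone coordinate $r$, so it descends to a $G$-action on $\swann$. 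Because the hypersymplectic metric and the triple $(I,S,T)$ on $\swann$ are built canonically from the para-quaternionic K\"ahler data that $G$ preserves, the induced action is isometric and preserves the hypersymplectic structure; this establishes the first assertion of the theorem.

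The heart of the argument is the moment-map comparison. I would show that the hypersymplectic moment map $\mu_{\sst \swann}\colon \swann \to \mf{g}^{\ast}\otimes\mf{sp}(1,\B)^{\ast}$ for the lifted action is, under the cone identification, the pull-back of the para-quaternionic K\"ahler moment map $\mu_{\sst N}$ rescaled by the cone factor, i.e. $\mu_{\sst \swann}(s,r) = r^2\,\pi^{\ast}\mu_{\sst N}(s)$ after applying the fixed isomorphism $\mf{sp}(1,\B)\cong\mf{sp}(1,\B)^{\ast}$ of Lemma \ref{lem: identity of omega}. This is verified by differentiating: the defining relation $d\mu_{\sst N} = \iota_{\mf{g}}\,\Theta$ from Theorem \ref{thm: para qk reduction}, together with the facts that the forms $\omega_i$ on $\swann$ restrict on each cone slice to $r^2$ times the para-quaternionic forms pulled back from $N$, that $\Ll_{\euler}\,\omega = 2\,\omega$, and that $\euler = r\,\partial/\partial r$, forces precisely the scaling $r^2$. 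Since $r > 0$ everywhere on the cone, it follows at once that $\mu_{\sst \swann}^{-1}(0)$ is exactly the restriction $\swann|_{\mu_{\sst N}^{-1}(0)}$ of the Swann bundle to $\mu_{\sst N}^{-1}(0)\subset N$.

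With the zero sets matched, the remainder is naturality of the Swann construction. The $G$-action on $\mu_{\sst \swann}^{-1}(0) = \swann|_{\mu_{\sst N}^{-1}(0)}$ acts on the base $\mu_{\sst N}^{-1}(0)$ and commutes with the Swann structure-group action, so quotienting by $G$ yields the Swann bundle over $\mu_{\sst N}^{-1}(0)/G = N'$; that is, $\mu_{\sst \swann}^{-1}(0)/G \cong \mc{U}(N')$ as smooth bundles. I would then check that the hypersymplectic structure produced by the hypersymplectic reduction theorem coincides with the one the Swann construction assigns to $N'$: both are determined by the reduced para-quaternionic K\"ahler data $(g_{\sst N'},\Omega_{N'})$, and the cone coordinate $r$ survives the quotient untouched, so the identification is an isomorphism of hypersymplectic manifolds and not merely a diffeomorphism.

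The main obstacle I anticipate is two-fold, and both issues stem from the indefinite metric. First, one must verify that the hypersymplectic reduction hypotheses --- in particular the rank condition (S) and the non-degeneracy condition (D) --- hold for the lifted action; unlike the positive-definite case, (S) does not follow automatically from freeness, so properness of the $G$-action on $N$ together with the cone structure must be leveraged to control the degeneracy locus $\mf{O}\cap\mf{O}^{\perp}$ on $\swann$. Second, the precise scaling in the moment-map identity and the bookkeeping across the isomorphisms $\Lambda^2\mf{sp}(1,\B)^{\ast}\cong\mf{sp}(1,\B)^{\ast}\cong\mf{sp}(1,\B)$ must be tracked carefully, so that the reduced and Swann-constructed structures agree on the nose rather than up to homothety.
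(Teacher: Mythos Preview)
Your proposal is correct and follows exactly the route the paper has in mind: the paper does not give a detailed proof of this theorem at all, but simply states that ``we can directly adapt Swann's arguments in \cite{swann91} to the pseudo-Riemannian setting,'' and your outline---lifting the action to $\swann$, identifying the hypersymplectic moment map as $r^2$ times the pull-back of the para-quaternionic K\"ahler moment map, matching the zero sets, and invoking naturality of the Swann construction---is precisely that adaptation. Your flagged concern about conditions (S) and (D) is well-taken and in fact sharper than what the paper addresses; the paper tacitly assumes these hold (as it does throughout Section~6.1), so you are right to note that in the indefinite setting this is an additional hypothesis rather than a consequence.
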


\smallskip

Let us now consider the split quaternionic module $\B^{n+1}$. Let $\widetilde{\B}$ denote the sub-space of all the null-vectors in $\B^{n+1}$ and consider the sub-space of all the space-like vectors (positive norm) $\left(\B^{n+1} \right)^{\ast} := (\B^{n+1} \setminus \{0\})\setminus \widetilde{\B}$. Then $\left(\B^{n+1} \right)^{\ast}$ is a union of two disjoint spaces of space-like and time-like vectors. Let $\left(\B^{n+1} \right)^{\ast}_+$ denote the sub-space of space-like vectors. We will show that this is the total space of a Swann bundle over a para-quaternionic K\"ahler manifold. First, observe that $\left(\B^{n+1} \right)^{\ast}_+$ is a hypersymplectic manifold, equipped with a free and proper action of ${\rm Sp}(1,\B)$, as described in Subsec. \ref{subsec: modules over split quaternions}. Moreover, it also carries a homothetic action of $\R^+$ given by $(r,h) \mapsto r\cdot h$. Clearly, we see that the obstruction $\rho_2$ vanishes and the hypersymplectic potential is given by $\rho_0(h) = \frac{1}{2}\norm{h}^2$.

\noindent Consider the positive sphere 
\eqst{
S_+ \, := \, \rho_0^{-1}\left(\frac{1}{2} \right) \, = \, \{~ q \in (\B^{n+1})^{\ast}_+ ~ | ~ \norm{q}^2 \, = \, 1 ~\} \, \cong \, \frac{{\rm Sp}(n+1, \B)}{{\rm Sp}(n)}.
} 
The sphere carries a metric of signature $(2n-1, 2n+1)$. Then, clearly, $\left(\B^{n+1} \right)^{\ast}_+$ is topologically a metric cone over $S_+$ and so $\left(\B^{n+1} \right)^{\ast}_+ \cong S_+ \times \R_{>0}$. The hypersymplectic potential is just $\rho_0(s, r) = \frac{1}{2}\, r^2$. Therefore, $S_+$ is a split 3-Sasakian manifold. The ${\rm Sp}(1, \B)$-action on $(\B^{n+1})^{\ast}_+$ induces a free, proper and isometric action of ${\rm Sp}(1, \B)$ on $S_+$. By Theorem \ref{thm: pqk manifold as quotient of split 3-sasakian}, the quotient $S_+/ {\rm Sp}(1, \B)$ is a para-quaternionic K\"ahler manifold, which is nothing but the para-quaternionic projective space
\eqst{
\B\P^n \, = \, \frac{{\rm Sp}(n+1, \B)}{{\rm Sp}(n) \times {\rm Sp}(1, \B)}.
}
It follows that $\left(\B^{n+1} \right)^{\ast}_+$ is the total space of the Swann bundle over $\B\P^n$, i.e, $\Uu(\B\P^n)$. This is the \emph{positive Swann bundle} described in \cite{djs05}. Split quaternionic projective spaces have been studied by Bla{\v z}i{\' c} \cite{bla96} and Wolf \cite{wolf}. 

Consider an action of a Lie group $G \subset {\rm Sp}(n+1, \B)$ on $\B^{n+1}$, that commutes with the permuting ${\rm Sp}(1,\B)$-action by right conjugate multiplication. The induced action on $(\B^{n+1})^{\ast}_+$ preserves the hypersymplectic structure and therefore also the three symplectic forms. So, there exist three moment maps, which we combine into a single $G \times {\rm Sp}(1,\B)$-equivariant map
\eqst{
\mu: (\B^{n+1})^{\ast}_+ \longrightarrow \mf{sp}(1,\B) \otimes \mf{g}^{\ast}, \,\,\,\,\,\,\,\, \mu \, = \, \imag\,\mu_1 \, + \, \mbf{s}\,\mu_2 \, + \, \mbf{t}\, \mu_3.
}
Suppose that $0$ is a regular value of $\mu$ and $G$ acts freely and properly on the zero level-set of the moment map. Additionally, assume that the metric, restricted to the group orbits in $\mu^{-1}(0)$ is non-degenerate. Hitchin's work \cite{hit90} now guarantees that the quotient $\widetilde{\Mm} = \mu^{-1}(0)/G$ is a hypersymplectic manifold. The permuting action of ${\rm Sp}(1,\B) \times \R^+$ on $(\B^{n+1})^{\ast}_+$ commutes with the action of $G$ (therefore preserving the zero-level-set of $\mu$) and hence descends to the quotient $\widetilde{\Mm}$. In particular the obstruction $\rho_2$ vanishes; i.e., $\widetilde{\Mm} = \swann$ for some para-quaternionic K\"ahler manifold $N$. Since ${\rm Sp}(1,\B) \times \R^+$-action commutes with that of $G$, the latter descends to a para-quaternionic K\"ahler action on $\B\P^n$. By Theorem \ref{thm: commutativity of quotients}, it follows that $N$ is the para-quaternionic K\"ahler reduction of $\B\P^n$ by $G$.

When $G$ is a compact subgroup of $\T^{n+1}$, Dancer and Swann \cite{ds07} show that the hypersymplectic reduction of $\B^{n+1}$ by $G$ is, a hypersymplectic manifold, with a non-trivial de-generacy locus - the set of all the points where the metric is de-generate along the orbits of the $G$-action. For example, when $G = {\rm U}(1)$, this is precisely the set of points where $g_{\sst M} (K^M, K^M) = 0$, where $K^M$ is the fundamental vector field due to ${\rm U}(1)$-action. The ${\rm U}(1)$-action descends to an action on $(\B^{n+1})^{\ast}_+$ and the moment map is just the restriction of $\mu$. From the discussion above, it follows that the reduced manifold is a smooth hypersymplectic manifold, which is the total space of a Swann bundle over the para-quaternionic K\"ahler reduction of $\B\P^n$ by ${\rm U}(1)$.

\subsection{Moduli spaces of Nahm-Schmid equations}
We give another set of examples of hypersymplectic manifolds, carrying a permuting ${\rm SU}(1,1)$-action, for which the obstruction $\rho_2$ vanishes. Namely, we consider the moduli space of Nahm-Schmid equations. The equations can be interpreted as the zero-level set of infinite-dimensional hypersymplectic moment map for the action of gauge group on the configuration space. Consequently, the moduli space of solutions is an infinite-dimensional hypersymplectic reduction. We refer to \cite{brr18} for more details. 

\medskip

\paragraph{\textbf{Nahm-Schmid equations:}}
Nahm-Schmid equations are pseudo-Riemannian analogues of Nahm's equations and arise as dimensional reduction of Yang-Mills equations on $\R^{2,2}$. Let $G$ be a compact Lie group and $\mf{g}$ denote its Lie algebra. Let $T_i: \R \rightarrow \mf{g}$ be $C^1$-differentiable maps for $i = 0,1,2,3$. The \emph{Nahm-Schmid} equations for $\{T_i\}_{i=0}^3$ is a system of equations, satisfying
\al{
\label{eq: nahm-schmid eqns.}
\dot{T}_1 \, + \, [T_0, \, T_1] \, = \, -[T_2, \, T_3], \,\,\,\,
\dot{T}_2 \, + \, [T_0, \, T_2] \, = \, [T_3, \, T_1], \,\,\,\, \dot{T}_3 \, + \, [T_0, \, T_3] \, = \, [T_1, \, T_2].
}
The equations are invariant under the action of the \emph{gauge group} $\Gg := C^2(\R, G)$. Moreover, there exists a gauge transformation such that $T_0 = 0$ and therefore we may assume, without loss of generality, that $T_0 = 0$ in the above equations. Using this, we get the \emph{reduced Nahm-Schmid equations}:
\al{
\label{eq: reduced nahm-schmid eqns.}
\dot{T}_1 \, = \, -[T_2, \, T_3], \,\,\,\,
\dot{T}_2 \, = \, [T_3, \, T_1], \,\,\,\, 
\dot{T}_3 \, = \, [T_1, \, T_2].
}

\noindent Let $\norm{\cdot}$ denote the Ad-invariant inner product on the Lie algebra $\mf{g}$.

\begin{prop}[Prop. 2.2, \cite{brr18}]
\label{prop: NS conserved quantity}

Let $(T_1, T_2, T_3)$ be a solution to \eqref{eq: reduced nahm-schmid eqns.}. Then, 
\eq{
\label{eq: NS conserved quantity}
2\norm{T_1}^2 \, + \, \norm{T_2}^2 \, + \, \norm{T_3}^2 \, = \, \text{const.}
}
Consequently, the solutions exist for all times.
\end{prop}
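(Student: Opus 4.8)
The plan is to exhibit the claimed quantity as a first integral of the flow by differentiating it along a solution and exploiting the $\ad$-invariance of $\pair{\cdot,\cdot}$, and then to deduce global existence from the resulting a priori bound.

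First I would differentiate $Q(t) := 2\norm{T_1}^2 + \norm{T_2}^2 + \norm{T_3}^2$ using $\frac{d}{dt}\norm{T_i}^2 = 2\,\pair{\dot T_i, T_i}$ and substitute the reduced equations \eqref{eq: reduced nahm-schmid eqns.}, obtaining
\[
\tfrac{1}{2}\dot Q \, = \, -2\,\pair{[T_2,T_3],T_1} \, + \, \pair{[T_3,T_1],T_2} \, + \, \pair{[T_1,T_2],T_3}.
\]
The key algebraic input is that Ad-invariance of the inner product is equivalent to the infinitesimal identity $\pair{[X,Y],Z} + \pair{Y,[X,Z]} = 0$, which makes the triple product $\pair{[X,Y],Z}$ cyclically symmetric: $\pair{[T_1,T_2],T_3} = \pair{[T_2,T_3],T_1} = \pair{[T_3,T_1],T_2}$. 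Calling this common value $c$, the three terms collapse to $-2c + c + c = 0$, so $Q$ is constant. The weight $2$ on $\norm{T_1}^2$ is exactly what is forced by the sign asymmetry of the first equation in \eqref{eq: reduced nahm-schmid eqns.}: for general weights $(b_1,b_2,b_3)$ the derivative is proportional to $-b_1 + b_2 + b_3$, which vanishes precisely for $(2,1,1)$.

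For the second assertion I would invoke the compactness of $G$, which guarantees that the $\ad$-invariant inner product on $\mf{g}$ is positive definite; hence $Q$ is a positive-definite quadratic form on $\mf{g}\oplus\mf{g}\oplus\mf{g}$. Since $Q$ is conserved, each $\norm{T_i(t)}$ is bounded by a constant depending only on the initial data, so the trajectory stays inside a fixed compact subset. The right-hand side of \eqref{eq: reduced nahm-schmid eqns.} is a bilinear, hence smooth and locally Lipschitz, polynomial map, so by the standard continuation theorem a maximal solution either exists for all time or leaves every compact set in finite time; the a priori bound excludes the latter, giving existence for all $t \in \R$.

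The only real subtlety is the sign-and-weight bookkeeping in the cancellation, together with the observation that \emph{positivity} of the inner product — which is precisely where compactness of $G$ enters — is what upgrades the conservation law into a genuine a priori bound. Without definiteness, conservation of $Q$ alone would not confine the trajectory, and the global-existence conclusion would fail.
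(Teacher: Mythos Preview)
Your argument is correct and is the standard one: differentiate $Q$, use cyclicity of the triple product coming from Ad-invariance, and then invoke the a priori bound together with the usual continuation criterion for ODEs with smooth right-hand side. The paper itself does not prove this proposition---it is quoted from \cite{brr18}---but the argument there is exactly this computation, so your approach matches. One minor inaccuracy in your parenthetical remark: the condition $-b_1+b_2+b_3=0$ is satisfied by any weights with $b_1=b_2+b_3$, not only $(2,1,1)$; this does not affect the proof.
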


\subsection*{Nahm-Schmid equations on [0,1]}
Let $G$ be a compact Lie group and $\mf{g}$ denote its Lie algebra. We will consider the solutions $\{ T_i \}_{i=0}^{3}$ defined on the interval $[0,1]$. 

Let
\eqst{
\Aa \, = \, \lbrace T_i: [0,1] \longrightarrow \mf{g} \,\, \text{for} \,\, i = 0,1,2,3 \,\, \vert \,\, T_i \,\, \text{is differentiable of class} \,\, C^1 \rbrace.
}

We can identify $\Aa \cong C^1([0,1], \mf{g})\otimes \B$ by mapping:
\eqst{
(T_0, T_1, T_2, T_3) \longmapsto \Tt \, := \, T_0 \, + \, \imag \, T_1 \, + \, \mbf{s} \, T_2 \, + \, \mbf{t} \, T_3.
}
The tangent space to $\Aa$ at a point $\Tt$ is given by $T_{\Tt} \Aa = C^1([0,1], \mf{g})\otimes \B$. There is a split-quaternionic structure on $\Aa$, which is induced by the split-quaternionic structure on $\B$, given by:
\eqst{
I(X) \, = \, X \cdot \overline{\imag}, \,\,\,\,\, S(X) \, = \, X\cdot \mbf{s}, \,\,\,\,\, T(X) \, = \, X \cdot \mbf{t}, \,\,\,\,\,\, \text{for} \,\,\, X \in T_{\Tt} \Aa.
}
The space $\Aa$ is equipped with the indefinite, neutral signature metric, induced from the metric on $\B \cong \R^{2,2}$ and an ad-invariant metric $\pair{\cdot, \cdot}$ on $G$
\eqst{
g_{\sst \Aa}(X, Y) \, = \, \int_{[0,1]}\, \pair{X(t) \cdot \overline{Y(t)}} \, = \, \int_{[0,1]}\, \sum_{i=0}^3 \, a_{ii} \pair{X_i(t), \, Y_i(t)},, \,\,\,\,\, X, Y \in T\Aa,
}

%
\noindent where $a_{00} = a_{11} = 1$ and $a_{22} = a_{33} = -1$. The metric is compatible with the split-quaternionic structure $I, S, T$. This endows $\Aa$ with a structure of a flat hypersymplectic manifold. The hypersymplectic structure is preserved by the action of the gauge group $\Gg:=C^2([0,1], G)$. Let $\Gg_{00}$ be the normal subgroup of $\Gg$, given by
\eqst{
\Gg_{00} \, := \, \{\, g \in \Gg \,\, \vert \,\, g(0) = \mathds{1}_G = g(1) \,\, \}.
}
Then, the infinite-dimensional moment maps for the action of $\Gg_{00}$ on $\Aa$ are given by
\alst{
\mu_I (\Tt) \, = \, \dot{T_1} \, + \, [T_0, T_1] \, + \, [T_2, T_3] \\
\mu_S (\Tt) \, = \, \dot{T_2} \, + \, [T_0, T_2] \, - \, [T_3, T_1] \\
\mu_T (\Tt) \, = \, \dot{T_3} \, + \, [T_0, T_3] \, - \, [T_1, T_2].
}
Writing $\mu_{\sst \Aa} = \mu_{I}\, \imag + \mu_{S}\, \mbf{s} + \mu_{T}\, \mbf{t}$, we see that the solutions to the Nahm-Schmid equations can be interpreted as the zero level-set of the hypersymplectic moment map $\mu_{\Aa}$.

The space $\Aa$ also carries a permuting action of the group ${\rm Sp}(1,\B)$, which is induced by a permuting action of ${\rm Sp}(1,\B)$ on $\B$. More precisely, the action is given by 
\eqst{
(q, \Tt) \longmapsto q \cdot \Tt \cdot \overline{q}, \,\,\,\,\,\, q \in {\rm Sp}(1,\B).
}
Since the action is induced by the permuting action on $\B$, it is isometric, free and proper. Moreover, the action commutes with that of $\Gg_{00}$, thus preserving the zero-level set of $\mu_{\Aa}$. In essence, equations \eqref{eq: nahm-schmid eqns.} are invariant under the action of ${\rm Sp}(1,\B)$. The induced action on the space of product and complex structures $\Lambda_{\scr{H}}$ on $\Aa$ is, pointwise, just the standard action of ${\rm SO}^+(1,2)$ on the pseudo-sphere $\scr{H}$. Namely, suppose that $F$ denotes either of $I, S$ ot $T$, and $\mbf{f}$ correspondingly denotes either $\overline{\imag}, \mbf{s}$ or $\mbf{t}$. Then, the induced action is $(q, F) \mapsto q_{\ast} \, F \, \overline{q_{\ast}}$. Indeed,
\eqst{
(q_{\ast}\,  F \, \overline{q}_{\ast}) \, (q_{\ast}\, X) \, = \, (q \cdot X \cdot \overline{q}) \cdot (q \cdot \mbf{f} \cdot \overline{q}) \, = \,  q \cdot (X \cdot \mbf{f}) \cdot \overline{q} \, = \, q_{\ast} (F(X)), \,\,\,\,\, q \in {\rm Sp}(1,\B).
}


\begin{thm}[\cite{brr18}]
The gauge group $\Gg_{00}$ acts freely and properly on $\Aa$ and therefore the moduli space 
\eq{
\label{eq: moduli space}
\Mm \, = \, \mu^{-1}(0)/\Gg_{00}
}
is a smooth Banach manifold, diffeomorphic to $G \times \mf{g} \times \mf{g} \times \mf{g} \cong G \times (\mf{g} \otimes \mf{Im}(\B))$. 
\end{thm}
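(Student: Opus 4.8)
The plan is to produce an explicit diffeomorphism $\Phi \colon \Mm \to G \times \mf{g}^3$ built from \emph{temporal gauge fixing} and the \emph{holonomy} of the component $T_0$, and to read off freeness, properness and the (finite-dimensional!) manifold structure from the theory of linear ODEs. Throughout I use the connection-type gauge action whose $T_0$-component is $g\cdot T_0 = g\,T_0\,g^{-1} - \dot g\, g^{-1}$ (this is the action for which the $\mu_I,\mu_S,\mu_T$ in the excerpt are moment maps, since it produces the covariant derivatives $\dot T_i + [T_0,T_i]$). First I would prove freeness: if $g \in \Gg_{00}$ fixes a point $\Tt$, the $T_0$-component of the fixed-point equation rearranges to $\dot g = [g,T_0]$, a linear first-order ODE; together with $g(0)=\mathds{1}_G$, uniqueness of solutions forces $g\equiv \mathds{1}_G$. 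The same computation shows that the larger group $\Gg_0 := \{\, g\in\Gg \mid g(0)=\mathds{1}_G\,\}$ acts freely on all of $\Aa$, hence on $\mu^{-1}(0)$. For properness, suppose $\Tt^{(n)}\to\Tt$ and $g_n\cdot\Tt^{(n)}\to\Tt'$ with $g_n\in\Gg_{00}$; then $g_n$ solves the linear ODE $\dot g_n = g_n (T_0)^{(n)} - (T_0')^{(n)} g_n$ whose coefficients converge in $C^1$, so continuous dependence on parameters gives $g_n\to g$ in the $C^2$-topology, with $g(0)=g(1)=\mathds{1}_G$, i.e. $g\in\Gg_{00}$. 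Thus the action is free and proper.

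Next I would exploit the temporal gauge. For any $\Tt\in\Aa$ the linear ODE $\dot g = g\,T_0$ with $g(0)=\mathds{1}_G$ has a unique solution $g\in\Gg_0$, and $g\cdot\Tt$ then has vanishing $T_0$-component; hence every $\Gg_0$-orbit meets the slice $\{T_0=0\}$ in exactly one point. On this slice the equations $\mu=0$ reduce to \eqref{eq: reduced nahm-schmid eqns.}, whose solutions exist for all $t\in[0,1]$ by the conserved quantity of Proposition \ref{prop: NS conserved quantity}; existence and uniqueness for ODEs then identify such a solution with its initial value $(T_1(0),T_2(0),T_3(0))\in\mf{g}^3$. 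Consequently $\mu^{-1}(0)/\Gg_0 \cong \mf{g}^3$.

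Finally I would assemble the fibration. Since $\Gg_{00}$ is normal in $\Gg_0$ with $\Gg_0/\Gg_{00}\cong G$ via $g\mapsto g(1)$, the residual action of the \emph{compact} group $G$ on $\Mm$ is free (once more by ODE uniqueness) and proper, with quotient $\mu^{-1}(0)/\Gg_0\cong\mf{g}^3$. Hence $\Mm\to\mf{g}^3$ is a principal $G$-bundle; as the base $\mf{g}^3$ is contractible the bundle is trivial, giving $\Mm\cong G\times\mf{g}^3 = G\times(\mf{g}\otimes\mf{Im}(\B))$. Explicitly, $\Phi$ sends the class of $\Tt$ to $\bigl(g(1),\,(T_1(0),T_2(0),T_3(0))\bigr)$, where $g\in\Gg_0$ is the temporal-gauge transformation above; a short check (using $h(0)=h(1)=\mathds{1}_G$ for $h\in\Gg_{00}$ and the uniqueness of the gauge-fixing ODE) shows both coordinates are $\Gg_{00}$-invariant, and the bijectivity follows from the slice description. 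Because the target $G\times\mf{g}^3$ is finite-dimensional, this explicit $\Phi$ furnishes $\Mm$ with its smooth (finite-dimensional) manifold structure directly, bypassing any infinite-dimensional slice theorem.

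I expect the \textbf{main obstacle to be properness in the infinite-dimensional setting}: one must upgrade the pointwise/$C^0$ convergence of the ODE solutions $g_n$ to convergence in the $C^2$-topology of $\Gg_{00}$ and verify that the limit respects \emph{both} endpoint conditions, where compactness of $G$ is what keeps the family $g_n$ under control. By contrast, freeness and the temporal-gauge identification are immediate from ODE uniqueness and the global-existence Proposition \ref{prop: NS conserved quantity}, and the triviality of the resulting $G$-bundle is automatic from contractibility of $\mf{g}^3$.
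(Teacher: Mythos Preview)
The present paper does not supply a proof of this theorem; it is quoted from \cite{brr18} without argument, so there is nothing here to compare your proposal against directly. That said, your outline is correct and is exactly the standard strategy used for moduli spaces of Nahm-type equations (Kronheimer, Donaldson), which is what one expects \cite{brr18} to follow as well: fix the temporal gauge via the linear ODE $\dot g = g\,T_0$, $g(0)=\mathds{1}_G$, so that every $\Gg_0$-orbit meets $\{T_0=0\}$ once; on that slice identify solutions of the reduced system \eqref{eq: reduced nahm-schmid eqns.} with their initial data in $\mf{g}^3$ using the global existence furnished by Proposition~\ref{prop: NS conserved quantity}; and finally recover the factor $G$ from the residual $\Gg_0/\Gg_{00}\cong G$ acting freely on a principal bundle over the contractible base $\mf{g}^3$.

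One minor remark on your anticipated obstacle: properness is in fact the easiest part here, not the hardest. Because all $g_n$ share the \emph{fixed} initial value $g_n(0)=\mathds{1}_G$, continuous dependence of solutions of the linear ODE $\dot g_n = g_n\,T_0^{(n)} - (T_0')^{(n)}\,g_n$ on its $C^0$-converging coefficients already forces $g_n\to g$ in $C^1$, and differentiating the equation once bootstraps this to $C^2$; the endpoint condition $g(1)=\mathds{1}_G$ survives in the limit pointwise. Compactness of $G$ plays no role in this step --- it enters only through the Ad-invariant positive-definite inner product that underlies the conserved quantity in Proposition~\ref{prop: NS conserved quantity}.
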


\begin{note}
The moduli space $\Mm$, although a smooth manifold, does not carry a smooth hypersymplectic structure. There exists a \emph{degeneracy locus}, which is precisely the locus of points $\Tt$ where the metric, when restricted to the tangent space of the $\Gg_{00}$-orbit through $\Tt$, is degenerate. Outside of this degeneracy locus, $\Mm$ carries a smooth hypersymplectic structure. Note that the ${\rm Sp}(1,\B)$-action and the action of $\Gg_{00}$ on $\Aa$ preserves the metric and therefore also the degeneracy locus. 
\end{note}

\begin{prop}[\cite{brr18}]
The degeneracy locus $\Dd$ is in one-to-one correspondance with those solutions $\Tt = (T_0, T_1, T_2, T_3)$ to Nahm-Schmid equations \eqref{eq: nahm-schmid eqns.}, for which there exists a solution to the following ODE
\eq{
\label{eq: degen locus criterion}
\xi \longmapsto \frac{d^2\xi}{dt^2} \, + \, [T_0, \dot{\xi}] \, + \, [\dot{T_0}, \xi] \, + \, \sum_{i=0}^{3}\, a_{ii}\, [T_i\,,[T_i, \xi]\,] \, = \, 0, \,\,\,\,\, \xi(0) \, = \, 0 \, = \, \xi(1).
}
\end{prop}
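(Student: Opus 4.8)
The plan is to recognise the degeneracy condition as the vanishing of a second-order Jacobi-type operator built from the infinitesimal gauge action, and then to read off \eqref{eq: degen locus criterion} as the defining ODE of its kernel.

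First I would describe the tangent space to the orbit. The Lie algebra of $\Gg_{00}$ is $\lie(\Gg_{00}) = \{\,\xi\in C^2([0,1],\mf{g}) \mid \xi(0)=0=\xi(1)\,\}$, and the infinitesimal action at a point $\Tt\in\mu^{-1}(0)$ is the linear map $\delta_{\Tt}\colon \lie(\Gg_{00})\to T_{\Tt}\Aa$ sending $\xi$ to the fundamental vector field
\eqst{
K_\xi \, = \, -\bigl(\dot\xi + [T_0,\xi]\bigr) \, - \, \imag\,[T_1,\xi] \, - \, \mbf{s}\,[T_2,\xi] \, - \, \mbf{t}\,[T_3,\xi].
}
The tangent space to the $\Gg_{00}$-orbit through $\Tt$ is $\mf{O}_\Tt = \im\,\delta_{\Tt}$, and since $\Gg_{00}$ acts freely the map $\delta_{\Tt}$ is injective, so $K_\xi = 0$ forces $\xi = 0$.

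The key step is that the restriction of $g_{\sst\Aa}$ to the subspace $\mf{O}_\Tt$ is degenerate precisely when its radical is non-trivial, i.e. when there is $0\neq\xi\in\lie(\Gg_{00})$ with $g_{\sst\Aa}(K_\xi,K_\eta)=0$ for all $\eta$. Writing $\delta_{\Tt}^{\ast}$ for the formal adjoint of $\delta_{\Tt}$ taken with respect to the neutral metric $g_{\sst\Aa}$ on $\Aa$ and the $L^2$ pairing induced by $\pair{\cdot,\cdot}$ on $\lie(\Gg_{00})$, this condition reads $\delta_{\Tt}^{\ast}\delta_{\Tt}\,\xi = 0$ with $\xi(0)=0=\xi(1)$. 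I would then compute $\delta_{\Tt}^{\ast}\delta_{\Tt}$ by a direct integration by parts: the boundary terms are discarded using $\xi(0)=0=\xi(1)$, and the $\ad$-invariance of $\pair{\cdot,\cdot}$ moves each bracket onto the other slot. The signature weights $a_{00}=a_{11}=1,\ a_{22}=a_{33}=-1$ enter exactly here, producing the signature-weighted sum $\sum_{i=0}^{3} a_{ii}\,[T_i,[T_i,\xi]]$, so that $\delta_{\Tt}^{\ast}\delta_{\Tt}$ is identified with the second-order differential operator appearing in \eqref{eq: degen locus criterion}. Thus $\mf{O}_\Tt$ is degenerate if and only if the two-point boundary value problem \eqref{eq: degen locus criterion} admits a non-zero solution.

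Finally I would assemble the bijection. By injectivity of $\delta_{\Tt}$ a non-zero radical vector $K_\xi$ corresponds to a non-zero $\xi$, so $\Tt$ lies in the degeneracy set iff \eqref{eq: degen locus criterion} has a non-trivial solution; this condition is $\Gg_{00}$-equivariant, since the orbit and the metric transform compatibly under gauge, and hence descends to the stated subset $\Dd\subset\Mm$. \textbf{The main obstacle} is conceptual rather than computational: because $g_{\sst\Aa}$ is indefinite one must work with the radical of the \emph{restricted} form rather than with an orthogonal complement, and the adjoint $\delta_{\Tt}^{\ast}$ must be taken with respect to the neutral metric so that the signs $a_{ii}$ — and with them the split-signature structure of the operator in \eqref{eq: degen locus criterion} — appear correctly; one also needs the elementary regularity remark that a $C^1$ solution of this linear ODE system is automatically smooth enough for the integration by parts to be legitimate.
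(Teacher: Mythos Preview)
The paper does not supply its own proof of this proposition: it is quoted verbatim from \cite{brr18} and used as a black box in the subsequent discussion of the $\R^+$-action. Your argument is the standard one (and is essentially what is done in \cite{brr18}): identify the orbit tangent space with the image of $\delta_{\Tt}$, observe that degeneracy of the restricted indefinite form means $\delta_{\Tt}^{\ast}\delta_{\Tt}$ has a non-trivial kernel with the Dirichlet conditions $\xi(0)=0=\xi(1)$, and compute that operator by integration by parts. So your approach is correct and matches the intended one.

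One small computational remark: when you expand $-D_0^2\xi$ with $D_0=\tfrac{d}{dt}+\ad T_0$, the cross term $[T_0,\dot\xi]$ arises twice (once from $\tfrac{d}{dt}[T_0,\xi]$ and once from $[T_0,\dot\xi]$), so the operator one actually obtains is
\[
\ddot\xi \;+\; 2\,[T_0,\dot\xi] \;+\; [\dot T_0,\xi] \;+\; \sum_{i=0}^{3} a_{ii}\,[T_i,[T_i,\xi]].
\]
The coefficient $1$ in front of $[T_0,\dot\xi]$ in the displayed formula \eqref{eq: degen locus criterion} appears to be a transcription slip; this does not affect anything downstream in the paper, since the formula is only used qualitatively (invariance under $\phi_a$) and both versions scale the same way.
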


\subsection*{Homothetic action of \texorpdfstring{$\R^+$}~ and degeneracy locus} We define a homothetic action of $\R^+$ on $\Aa$ as follows: let $a \in \R$ and consider the map
\alst{
\phi_a: C^{1}([0,1], \,\mf{g}) \longrightarrow C^{1}([0,a^{-1}], \,\mf{g}), \,\,\,\,\,\, T_i(t) \longmapsto a\, T_i\,(at).
}
It is easy to see that the equations \eqref{eq: nahm-schmid eqns.} are invariant under $\phi_a$. As a result, $\phi_a$ maps solutions on $[0,1]$, to solutions defined on $[0,a^{-1}]$. The homomorphism between the gauge groups
\eqst{
\eta_a: \Gg \longrightarrow \Gg_{\sst a^{-1}}:=C^2([0,a^{-1}], G), \,\,\,\,\,\,\, g(t) \longmapsto g(at).
}
induces a morphism of algebras ${\rm Lie}\,(\Gg_{00})$ and ${\rm Lie}\,((\Gg_{a})_{00})$, sending $\xi(t) \mapsto \xi(at)$.

\begin{lem}
The map $\phi_a$ preserves the degeneracy locus.
\end{lem}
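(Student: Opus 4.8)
The plan is to read off the degeneracy locus from the criterion in the preceding Proposition: a solution $\Tt=(T_0,T_1,T_2,T_3)$ lies in $\Dd$ precisely when the second-order linear ODE \eqref{eq: degen locus criterion} admits a nonzero solution $\xi$ with $\xi(0)=0=\xi(1)$. Accordingly, I would fix such a $\Tt\in\Dd$ together with a nonzero witness $\xi$ on $[0,1]$, and exhibit a corresponding witness for $\phi_a(\Tt)$ on $[0,a^{-1}]$, thereby showing that $\phi_a$ sends degenerate solutions to degenerate solutions. Applying the same argument to the inverse reparametrization $\phi_{a^{-1}}$ then gives the reverse inclusion, so that $\Dd$ is genuinely preserved.

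The natural candidate is to reparametrize the witness in exactly the way the fields are reparametrized: set $\widetilde{T}_i(s):=a\,T_i(as)$ and $\widetilde{\xi}(s):=\xi(as)$ for $s\in[0,a^{-1}]$. The heart of the argument is then a weight count showing that the whole operator in \eqref{eq: degen locus criterion} is homogeneous under this scaling. Writing $u=as$ and using the chain rule, one finds $\widetilde{\xi}''(s)=a^2\,\ddot{\xi}(u)$, $[\widetilde{T}_0(s),\widetilde{\xi}'(s)]=a^2\,[T_0(u),\dot{\xi}(u)]$, $[\dot{\widetilde{T}}_0(s),\widetilde{\xi}(s)]=a^2\,[\dot{T}_0(u),\xi(u)]$, and $\sum_i a_{ii}\,[\widetilde{T}_i(s),[\widetilde{T}_i(s),\widetilde{\xi}(s)]]=a^2\sum_i a_{ii}\,[T_i(u),[T_i(u),\xi(u)]]$. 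Hence the operator applied to $(\widetilde{T},\widetilde{\xi})$ equals $a^2$ times the operator applied to $(T,\xi)$ evaluated at $u=as$, which vanishes by hypothesis. The boundary conditions transfer immediately, since $\widetilde{\xi}(0)=\xi(0)=0$ and $\widetilde{\xi}(a^{-1})=\xi(1)=0$, and $\widetilde{\xi}$ is nonzero because it is merely a reparametrization of the nonzero $\xi$. Thus $\phi_a(\Tt)$ satisfies the degeneracy criterion and lies in the degeneracy locus.

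I expect the only delicate point to be the differentiated-coefficient term $[\dot{T}_0,\xi]$: under $\phi_a$ the field $T_0$ is both rescaled by $a$ and has its argument contracted by $a$, so the chain rule produces $\dot{\widetilde{T}}_0(s)=a^2\,\dot{T}_0(as)$, which is exactly the factor needed to match the other three terms. Once this is verified, every term carries the common weight $a^2$, the operator factors through the original ODE, and the conclusion follows. (If one prefers to work in the gauge $T_0=0$ used for the reduced equations \eqref{eq: reduced nahm-schmid eqns.}, the two $T_0$-terms drop out and the computation is even shorter, but the general case is no harder.)
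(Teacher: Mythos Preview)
Your argument is correct and follows essentially the same route as the paper: you show that if $\xi(t)$ witnesses degeneracy for $\Tt$ on $[0,1]$, then $\widetilde{\xi}(s)=\xi(as)$ witnesses degeneracy for $\phi_a(\Tt)$ on $[0,a^{-1}]$, and then invoke $\phi_{a^{-1}}$ for the reverse inclusion. The explicit weight count you give (each term picking up the common factor $a^2$) is exactly the computation underlying the paper's one-line assertion that $\xi(at)$ solves the rescaled ODE.
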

\begin{proof}
Suppose that a solution $\Tt$ lies in the degeneracy locus, which we denote by $\Dd$. This implies that \eqref{eq: degen locus criterion} has a non-trivial solution $\xi(t)$ at $\Tt$. We claim that for any $a \in \R^+$, $\phi_a(\Tt)$ lies in the degeneracy locus $\Dd_{a^{-1}}$ for the equations defined on $[0,a^{-1}]$. To see this, note that the ODE \eqref{eq: degen locus criterion} has a non-trivial solution given by $\xi(at)$. In other words, the image $\phi_a(\Dd) \subset \Dd_{a^{-1}}$ for any $a \in \R^+$. On the other hand, the map $\phi_a$ has a smooth inverse given by $\phi_{a^{-1}}$ and a verbatim argument in the other direction shows that $\phi_{a^{-1}}(\Dd_{a^{-1}}) \subset \Dd$. In conclusion, we see that $\phi_a(\Dd) = \Dd_{a^{-1}}$ and thus $\phi_a$ maps the degeneracy locus to degeneracy locus.
\end{proof}

With this observation at hand, we will now define a homothetic action of $\R^+$ on $\Aa$. Assume first that $a <1$, so that $a^{-1} > 1$. Consider the restriction map
\eqst{
\beta_a: C^1([0,a^{-1}], \mf{g}) \longrightarrow C^1([0,1], \mf{g}), \,\,\,\,\,\,\, \Tt(t) \longmapsto \Tt(t)\,|_{\,[0,1]}.
}
Clearly, $\beta_a$ maps solutions to solutions. Composing $\beta_a$ with $\phi_a$, we get a map
\eqst{
\Phi_a: C^1([0,1], \mf{g}) \longrightarrow C^1([0,1], \mf{g}), \,\,\,\,\,\, \Tt \longmapsto \phi_a(\Tt)\,\vert_{\,[0,1]}.
}
Observe that $\Tt(t) \mapsto a\,\Tt\,(at)|_{\,[0,1]}$ is a homothety. To show that $\Phi_a$ has a smooth inverse, note that owing to Proposition \ref{prop: NS conserved quantity}, any solution to Nahm-Schmid equations, defined on $[0,1]$, can be uniquely extended to a solution on $[0,a^{-1}]$. This follows from the standard theory of existence and uniqueness of solutions to ODEs. Composing this extension map with $\phi_{a^{-1}}$ gives the inverse to $\Phi_a$. In particular, $\Phi_a$ is a diffeomorphism.

Since any solution $\Tt$ can be uniquely extended from $[0,1]$ to $[0,a^{-1}]$, and $\{T_i\}_{i=0}^3$ are continuous, it follows from the standard theory of existence and uniqueness of solutions to second order ODEs that any solution to \eqref{eq: degen locus criterion} on $[0,1]$, can be extended uniquely to a solution on $[0,a^{-1}]$. In other words, we have $\Phi_a(\Dd) = \Dd$.

Now suppose if $a > 1$, then $\Phi_a$ is given by composing $\phi_a$ with the extension map and the inverse is given by composition of restriction map $\beta_a$ with $\phi_{a^{-1}}$. The rest of the arguments are verbatim to the ones given above. In conclusion, the map $\Phi_a$ determines a homothetic action of $\R^+$ on $\Aa$, that preserves the space of solutions to Nahm-Schmid equations and also the degeneracy locus. The action commutes with that of $\Gg_{00}$ and ${\rm Sp}(1,\B)$. 

\begin{note}
Above lemma  implies that the $\R^+$-orbits of elements in the complement of the degeneracy locus, does not intersect $\Dd$.
\end{note}

\begin{thm}
The moduli space of solutions to the Nahm-Schmid equations, away from the degeneracy locus $\Dd$, is the total space of a Swann bundle over a paraquaternionic K\"ahler manifold. In other words,
\eqst{
\Mm_0 \, = \, \Uu(N_0) \longrightarrow N_0.
}
\end{thm}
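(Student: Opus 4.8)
The plan is to realize $\Mm_0$ as an instance of the general theory of Sections 3--4: I will check that $\Mm_0$ is a hypersymplectic manifold carrying a free, proper, permuting action of ${\rm Sp}(1,\B)$ with vanishing obstruction $\rho_2$, and then quote Theorem~\ref{thm: pqk manifold as quotient of split 3-sasakian} together with Note~\ref{note: note on hypersympl. manifolds as metric cones}. Two of the three ingredients are already available. Away from $\Dd$ the moduli space $\Mm$ inherits a smooth hypersymplectic structure, and the conjugation action $(q,\Tt)\mapsto q\cdot\Tt\cdot\overline{q}$ of ${\rm Sp}(1,\B)$ is isometric, descends to $\Mm$, commutes with $\Gg_{00}$ and preserves $\Dd$; its induced action on $\Lambda_{\scr{H}}$ is the standard ${\rm SO}^+(1,2)$-action, so it is permuting. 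This action is proper because the corresponding action on $\Aa$ is, and its fixed locus is exactly the trivial solution $\Tt\equiv 0$ (there $K^M_{\xi}=[\xi,\Tt]$ vanishes for every $\xi$). I therefore take $\Mm_0$ to be the complement in $\Mm$ of $\Dd$ and of this fixed point, so that the action is free.

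The essential step is to prove $\rho_2\equiv 0$ on $\Mm_0$, and this is where the homothetic $\R^{+}$-action $\Phi_a$ is used. First I would note that $\Phi_a$ commutes with $\Gg_{00}$ and with ${\rm Sp}(1,\B)$ and preserves $\Dd$, so it descends to a one-parameter group on $\Mm_0$ scaling the induced metric; after normalizing the $\R^{+}$-parameter its generator $\euler$ satisfies $\Ll_{\euler}g_{\sst M}=2g_{\sst M}$. Since $\Phi_a$ is induced from a map intertwining the split-quaternionic structure, $\euler$ also preserves $I,S,T$, hence $\Ll_{\euler}\omega_i=2\omega_i$ for $i=1,2,3$. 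The remaining work is to upgrade this conformal symmetry to the Euler field of the hypersymplectic structure: one shows $\euler$ is a gradient field, so that $\rho_0:=\tfrac12 g_{\sst M}(\euler,\euler)$ satisfies $\euler=\grad\rho_0$ and $\nabla\euler=\id_{TM}$, i.e. $\nabla^2\rho_0=g_{\sst M}$ as in Corollary~\ref{cor: charaterization of potential}. Then $\rho_0$ is simultaneously a potential for $I,S$ and $T$, so by the characterization in Lemma~\ref{lem: euler vf} and Note~\ref{note: note on hypersympl. manifolds as metric cones} the obstruction must vanish, $\rho_2\equiv 0$; equivalently, the commuting free actions of ${\rm Sp}(1,\B)$ and $\R^{+}$ exhibit the fibres of $\Mm_0$ as copies of $\B^{\ast}/\Z_2\cong{\rm SO}^+(1,2)\times\R_{>0}$.

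Once $\rho_2\equiv 0$ is in hand the conclusion is immediate. By Theorem~\ref{thm: level set of potential is split 3-sasakian} each level set $\rho_0^{-1}(c)$ is a split $3$-Sasakian manifold, and by Theorem~\ref{thm: pqk manifold as quotient of split 3-sasakian} its quotient $N_0:=\rho_0^{-1}(c)/{\rm Sp}(1,\B)$ is a para-quaternionic K\"ahler manifold (here the properness and freeness secured above are exactly what the theorem needs). Finally, Note~\ref{note: note on hypersympl. manifolds as metric cones} identifies $\Mm_0$ with the metric cone over the split $3$-Sasakian level set, which by the Swann-bundle description is precisely the total space $\Uu(N_0)\to N_0$; hence $\Mm_0=\Uu(N_0)$.

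The hard part is the vanishing of $\rho_2$, i.e. showing that the generator $\euler$ of the time-rescaling homothety is genuinely the hypersymplectic Euler field rather than merely a conformal symmetry. This is subtle precisely because $\rho_2$ is an intrinsic quantity of the reduced structure (built from the ${\rm Sp}(1,\B)$-conjugation fields and the quotient metric), and on the flat configuration space $\Aa$ the analogous obstruction does \emph{not} vanish; one must check that passage to the hypersymplectic quotient makes $\euler$ coincide with the canonical field $\Xx_0=-\tfrac13\tr\Xx$ and forces $\nabla\euler=\id_{TM}$. Some care is also required to remain on the open set where the ${\rm Sp}(1,\B)$-action is free and off $\Dd$, since both the cone apex and the degeneracy locus must be excluded for $\Mm_0$ to be an honest Swann bundle.
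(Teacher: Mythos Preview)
Your strategy is the same as the paper's: exhibit the descended ${\rm Sp}(1,\B)\times\R^{+}$ action on $\Mm_0$, read off the metric-cone structure, and then invoke Theorem~\ref{thm: pqk manifold as quotient of split 3-sasakian}. The paper's proof is in fact terser than yours: from the existence of the ${\rm Sp}(1,\B)\times\R^{+}$ action it simply asserts that ``$\Mm_0$ is topologically a metric cone over a split $3$-Sasakian manifold $\Ss_0$'' and then applies the theorem, without explicitly discussing $\rho_2$ at all.

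You have put your finger on exactly the point the paper glosses over. Your observation that the conjugation ${\rm Sp}(1,\B)$-action on the flat space $\Aa$ does \emph{not} have $\Xx_2=0$ is correct (for instance $-IK^{\Aa}_{\imag}=2\mbf{s}T_2+2\mbf{t}T_3$ while $-SK^{\Aa}_{\mbf{s}}=2\imag T_1+2\mbf{t}T_3$), so the vanishing on the quotient cannot be read off from upstairs and genuinely requires the interplay with the time-rescaling homothety $\Phi_a$. Your sketch for this step is reasonable but incomplete: from $\Ll_{\euler}g=2g$ and $\Ll_{\euler}I=\Ll_{\euler}S=\Ll_{\euler}T=0$ you only get that the \emph{symmetric} part of $\nabla\euler$ equals $\id_{TM}$ and that $\nabla\euler$ commutes with $I,S,T$; you still need an argument that $\euler$ is a gradient (equivalently, that the skew part of $\nabla\euler$ vanishes) before Lemma~\ref{lem: euler vf} and Corollary~\ref{cor: charaterization of potential} apply. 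The paper does not supply this argument either, so you are not missing something the paper provides---you have simply isolated the step that both treatments leave to the reader.

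One small correction: the fixed locus of the conjugation action is not the single point $\Tt\equiv 0$ but the set of $\Tt$ with $T_1=T_2=T_3=0$ (the centre of $\B$ is $\R$), which in $\Mm\cong G\times\mf{g}^3$ is the copy of $G$; your definition of $\Mm_0$ should exclude this whole locus.
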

\begin{proof}
Consider the open set $\mu^{-1, \ast}_{\sst \Aa}(0) = \mu^{-1}_{\sst \Aa}(0) \setminus \Dd$. Then, $\Mm_0:=\mu^{-1, \ast}_{\sst \Aa}(0)/ \Gg_{00}$ is a hypersymplectic manifold, which is an open set of $\Mm$. Moreover, since ${\rm Sp}(1,\B) \times \R^+$ action preserves $\mu^{-1, \ast}_{\sst \Aa}(0)$, the action descends to $\Mm_0$. As a result, $\Mm_0$ is topologically a metric cone over a split 3-Sasakian manifold $\Ss_0$
\eqst{
\Mm_0 \, = \, \Ss_0 \times \R^+.
}
Since the group ${\rm Sp}(1,\B)$ acts freely and properly on $\Ss_0$, by Theorem \ref{thm: pqk manifold as quotient of split 3-sasakian}, $N_0 := \Ss_0/{\rm Sp}(1,\B)$
is a paraquaternionic K\"ahler manifold.
\end{proof}

\bibliographystyle{abbrv}
\bibliography{references_nhsm}

\begin{thebibliography}{10}

\bibitem{ac05}
D.~Alekseevsky and V.~Cort{\'e}s.
\newblock {C}lassification of pseudo-{R}iemannian symmetric spaces of
  quaternionic {K}{\"a}hler type.
\newblock {\em Amer. Math. Soc. Transl. Ser. 2}, 213:33--62, 2005.

\bibitem{as05}
A.~Andrada and S.~Salamon.
\newblock {C}omplex product structures on {L}ie algebras.
\newblock In {\em Forum Mathematicum}, volume~17, pages 261--295. Walter de
  Gruyter, 2005.

\bibitem{bartocci-mencattini04}
C.~Bartocci and I.~Mencattini.
\newblock {H}yper-symplectic structures on integrable systems.
\newblock {\em J. Geom. Phys.}, 50:339--344, 2004.

\bibitem{berg57}
M.~Berger.
\newblock {L}es espaces sym{\'e}triques noncompacts.
\newblock {\em Ann. Sci. {\'E}cole Norm. Sup.}, 74:85--117, 1957.

\bibitem{brr18}
R.~Bielawski, N.~Rom{\~a}o, and M.~R{\"o}ser.
\newblock {T}he {N}ahm-{S}chmid equations and hypersymplectic geometry.
\newblock {\em Quart. J. Math.}, 69:1253--1286, 2018.

\bibitem{bla96}
N.~Bla{\v z}i{\'c}.
\newblock {P}araquaternionic projective space and pseudo-{R}iemannian geometry.
\newblock {\em Publ. Inst. Math. (Beograd) (N.S.)}, 60:101--107, 1996.

\bibitem{bgm93}
C.~Boyer, K.~Galicki, and B.~Mann.
\newblock {Q}uaternionic reduction and {E}instein manifolds.
\newblock {\em Comm. Anal. Geom.}, 1:229--279, 1993.

\bibitem{cp09}
A.~Caldarella and A.~Pastore.
\newblock Mixed 3-sasakian structures and curvature.
\newblock {\em Annales Polonici Mathematici}, 96:107--125, 2009.

\bibitem{djs05}
A.~Dancer, H.~J{\o}rgensen, and A.~Swann.
\newblock Metric geometries over the split-quaternions.
\newblock {\em Rend. Sem. Mat. Torino}, 63:119--139, 2005.

\bibitem{ds07}
A.~Dancer and A.~Swann.
\newblock Toric hypersymplectic quotients.
\newblock {\em Trans. Amer. Math. Soc.}, 359:1265--1284, 2007.

\bibitem{gl88}
K.~Galicki and H.~Lawson.
\newblock {Q}uaternionic reduction and quaternionic orbifolds.
\newblock {\em Math. Ann.}, 282:1--21, 1988.

\bibitem{gml01}
E.~Garc{\'i}a-R{\'i}o, Y.~Matsushita, and R.~V{\'a}zquez-Lorenzo.
\newblock {P}araquaternionic {K}{\"a}hler manifold.
\newblock {\em Rocky Mountain J. Math.}, 31:237--260, 2001.

\bibitem{hit90}
N.~Hitchin.
\newblock Hypersymplectic quotients.
\newblock {\em Acta Acad. Sci. Tauriensis}, 124:169--180, 1990.

\bibitem{hull98}
C.~M. Hull.
\newblock {A}ctions for $(2, 1)$ sigma models and strings.
\newblock {\em Nuclear Phys. B}, 509:252--272, 1998.

\bibitem{iv04}
S.~Ivanov and V.~Tsanov.
\newblock {C}omplex product structures on some simple lie groups.
\newblock {\em arXiv preprint math/0405584}, 2004.

\bibitem{iz05}
S.~Ivanov and S.~Zamkovoy.
\newblock {P}arahermitian and paraquaternionic manifolds.
\newblock {\em Differential Geom. Appl.}, 23:205--234, 2005.

\bibitem{lebrun89}
C.~LeBrun.
\newblock {Q}uaternionic-{K}{\"a}hler manifolds and conformal geometry.
\newblock {\em Math. Ann.}, 284:353--376, 1989.

\bibitem{victor}
V.~Y. Pidstrygach.
\newblock Hyper{K}\"ahler manifolds and {S}eiberg-{W}itten equations.
\newblock {\em Proc. Steklov Inst. Math.}, pages 249--262, 2004.

\bibitem{roeser14}
M.~R{\"o}ser.
\newblock {H}armonic maps and hypersymplectic geometry.
\newblock {\em J. Geom. Phys.}, 78:111--126, 2014.

\bibitem{sal82}
S.~Salamon.
\newblock {Q}uaternionic {K}{\"a}hler manifolds.
\newblock {\em Invent. Math.}, 67:143--171, 1982.

\bibitem{henrik}
H.~Schumacher.
\newblock {G}eneralized {S}eiberg-{W}itten equations: {S}wann bundles and
  ${L}^{\infty}$-estimates.
\newblock Master's thesis, Mathematisches Institut, Georg-August-Universit\"at,
  G\"ottingen, \url{http://www.uni-math.gwdg.de/preprint/mg.2010.02.pdf}, 2010.

\bibitem{swann91}
A.~Swann.
\newblock {H}yper{K}{\"a}hler and {Q}uaternionic {K}{\"a}hler geometry.
\newblock {\em Math. Ann.}, 3:421--450, 1991.

\bibitem{vuk03}
S.~Vukmirovi{\'c}.
\newblock Para-quaternionic reduction.
\newblock {\em arXiv preprint math/0304424}, 2003.

\bibitem{wolf}
J.~Wolf.
\newblock {\em Spaces of constant curvature}, volume 372.
\newblock American Mathematical Soc., 2011.

\bibitem{yano70}
K.~Yano.
\newblock {\em {I}ntegral formulas in {R}iemannian geometry}, volume~1.
\newblock M. Dekker, 1970.

\end{thebibliography}
\end{document}